\theoremstyle{plain}
\newtheorem{theorem}{Theorem}[section]
\newtheorem*{theorem*}{Theorem}
\newtheorem{lemma}[theorem]{Lemma}
\newtheorem{prop}[theorem]{Proposition}
\newtheorem{cor}[theorem]{Corollary}
\theoremstyle{definition}
\newtheorem{definition}[theorem]{Definition}
\newtheorem{ex}[theorem]{Example}
\newtheorem{rem}[theorem]{Remark}
\newtheorem{condition}[theorem]{Condition}
\newtheorem*{mt*}{Main Theorem}
\newcommand\C{{\mathbb C}}
\newcommand\g{{\mathfrak{g}}}
\newcommand\R{{\mathbb R}}
\newcommand\Z{{\mathbb Z}}
\newcommand\ad{{\rm ad}}
\newcommand\Ad{{\rm Ad}}
\newcommand\Aut{{\rm Aut}}
\newcommand\Der{{\rm Der}}
\newcommand\diag{{\rm diag\,}}
\newcommand\id{{\rm id}}
\newcommand{\cg}{{\mathfrak{g}}}
\newcommand{\cn}{{\mathfrak{n}}}
\begin{document}

\title[On   de Rham and Dolbeault  Cohomology of Solvmanifolds]{On   de Rham and Dolbeault  Cohomology of Solvmanifolds}

\author{Sergio Console, Anna Fino and  Hisashi Kasuya}
%\date{\today}
\address{(S. Console, A. Fino) Dipartimento di Matematica G. Peano \\ Universit\`a di Torino\\
Via Carlo Alberto 10\\
10123 Torino\\ Italy} \email{sergio.console@unito.it}
 \email{annamaria.fino@unito.it}
 \address{(H. Kasuya) Department of Mathematics, Tokyo Institute of Technology, 1-12-1, O-okayama, Meguro, Tokyo 152-8551, Japan}
  \email{khsc@ms.u-tokyo.ac.jp}
\subjclass[2000]{53C30,22E25,22E40}
\thanks{This work was supported by the PRIN project ``Real and complex manifolds: geometry, topology and harmonic analysis'' and by GNSAGA
of INdAM}

\begin{abstract} 
For a simply connected  (non-nilpotent) solvable Lie group $G$ with a lattice $\Gamma$ 
the de Rham and Dolbeault cohomologies of the    solvmanifold $G/\Gamma$ are not in general isomorphic to the cohomologies of the Lie algebra $\g$ of $G$.
In this paper we  construct, up to a finite group, a new Lie algebra $\tilde \g$ whose cohomology is isomorphic to the de Rham cohomology of $G/\Gamma$ by using a modification of $G$ associated with a algebraic sub-torus of the Zariski-closure of the image of the adjoint representation.
This technique includes the construction due to Guan and developed by the first two authors.
In this paper, we also give a Dolbeault version of such technique for complex solvmanifolds, i.e. for  solvmanifolds endowed with an invariant complex structure.
We construct a finite dimensional cochain complex which computes the Dolbeault cohomology of  a complex  solvmanifold $G/\Gamma$ with holomorphic Mostow bundle and we give a construction of a new Lie algebra $\breve \g$ with a complex structure whose cohomology is isomorphic to the Dolbeault cohomology of $G/\Gamma$.
\end{abstract}

\maketitle

\vskip.4cm

\hskip7.8cm {\it To the  memory of Sergio Console}

\vskip.6cm

\section{Introduction}

Let $G/\Gamma$ be  a \emph{solvmanifold}, i.e.  a compact quotient of a connected and simply connected solvable Lie group $G$ by a lattice $\Gamma$. If $G$ is nilpotent, $G/\Gamma$ is called  a  \emph{nilmanifold}.

\smallskip
In \cite{Nomizu}, Nomizu showed that the de Rham cohomology of  a  nilmanifold $G/\Gamma$ is isomorphic to the cohomology of the Lie algebra $\cg$ of $G$.
The de Rham  cohomology of  a solvmanifold  $G/\Gamma$ is  not in general isomorphic to the   cohomology of the Lie algebra $\cg$ of $G$.  The isomorphism  holds if  $G$ 
is completely solvable \cite{Hattori}  and more  in  general  if  $G/\Gamma$  satisfies the \emph{Mostow condition},
i.e.  if  the algebraic closure  ${\mathcal A} (\Ad_G(G))$ of $\Ad_G (G)$ coincides with the algebraic closure   ${\mathcal A} (\Ad_G (\Gamma))$ of $\Ad_G (\Gamma)$ {(see \cite{Mostow2} and \cite[Corollary 7.29]{Raghunathan}).

In the general case, 
techniques for the computation of the de Rham cohomology of  compact solvmanifolds  were provided  in \cite{Guan,CF} and by the third author   in \cite{Kas, KDD}.

\begin{enumerate}

\item  The first two authors   in \cite{CF}, using  results by D. Witte \cite{Witte}  on the superigidity of lattices in solvable Lie groups, obtained   a different proof of a result of Guan  \cite{Guan},  which can be applied to compute the Betti numbers of a  compact solvmanifold
$G/ \Gamma$, even in the case that the solvable Lie group  $G$ and the lattice  $\Gamma$ do not satisfy the Mostow condition. The  basic idea of this method  is to modify the  solvable Lie group $G$ into a new  solvable Lie group  $\tilde G$ (diffeomorphic to $G$)  and possibly considering a finite index subgroup $\tilde \Gamma$ of $\Gamma$, in such a way that the new compact solvmanifold  $\tilde G/ \tilde \Gamma$ satisfies the Mostow condition (cf. Theorem~\ref{Mod11} here).

\item The third author constructed the explicit finite dimensional differential graded algebra (shortly DGA) $A_{\Gamma}^{\ast}$  which computes the de Rham cohomology of a solvmanifold $G/\Gamma$.
The finite dimensional cochain complex
$A_{\Gamma}^{\ast}$ is associated with the diagonal representation $\Ad_s: G \rightarrow \Aut(\frak g)
$ which is defined by the decomposition of the Lie algebra $\frak g$  as  in \cite[Proposition III.1.1]{DER}. (cf. Theorem~\ref{KKK} here).
\end{enumerate}

\smallskip

One of the purposes of this paper is  to unify these two approaches for the computation of the de Rham cohomology of $G/\Gamma$.   To do this,  we describe a general modification with respect to an algebraic subtorus $S$ of the maximal torus $T = \mathcal A (\Ad_s (G))$ of $\mathcal A (\Ad_G(G))$, which includes as a particular case the Witte modification (cf. Proposition~\ref{prop-mod}).
We extend the results  in \cite{Kas, KDD} and the modification method in \cite{CF} (see Remark~\ref{compare} and  Examples~\ref{ex-mod} and \ref{ex-mod-NEW}). 

\smallskip

Next, we  turn  to solvmanifolds  $G/\Gamma$ endowed with an invariant complex structure $J$ (i.e., with a complex structure induced by a left-invariant one on $G$). We will call $(G/\Gamma, J)$ a \emph{complex solvmanifold}.
We are interested in computing the Dolbeault cohomology of a complex solvmanifold $(G/\Gamma, J)$, providing  Dolbeault analogues of the above arguments  for de Rham cohomology.

For a  complex solvmanifold $(G/\Gamma, J)$, we consider the relation between the Dolbeault cohomology of $(G/\Gamma, J)$ and the Dolbeault cohomology of the Lie algebra $\frak g$ associated with a complex structure $J$ as in  \cite{CF0, CFGU}.
We say that a  complex solvmanifold $(G/\Gamma, J)$ admits a \emph{good Dolbeault cohomology} if  the  
 Dolbeault cohomology of a complex solvmanifold $(G/\Gamma, J)$ is isomorphic to the Dolbeault cohomology of the associated Lie algebra $\frak g$ with a complex structure $J$

Suppose $G$ is nilpotent.
Then, similar to  the Nomizu's theorem, the Dolbeault cohomology of a complex nilmanifold $(G/\Gamma, J)$ is isomorphic to the Dolbeault cohomology of the Lie algebra $\frak g$ associated with a complex structure $J$
if one of the following conditions holds:
  \begin{itemize}
    \item $(G,J)$ is a complex Lie group (\cite{Sakane});
    \item $J$ is an abelian complex structure (\cite{CF0});
    \item $J$ is a nilpotent complex structure (\cite{CFGU})  ;
    \item $J$ is a rational complex structure (\cite{CF0}).
\end{itemize}
It is conjectured that every nilmanifold with
an invariant  complex structure admits a good Dolbeault cohomology.

A complex solvmanifold $(G/\Gamma, J)$ does not admit a good Dolbeault cohomology even if $(G/\Gamma, J)$ is complex parallelizable (i.e. a compact quotient  of  a complex solvable Lie group).
Steps towards the computation of the Dolbeault cohomology of a complex solvmanifold $(G/\Gamma, J)$ were  previously taken by the third author in \cite{Kas, KDD}. In particular,  a finite dimensional cochain complex which allows to compute the Dolbeault cohomology of complex parallelizable solvmanifolds was given.
In this paper, we generalize these results constructing a differential bigraded algebra (DBA) which allows to compute the Dolbeault cohomology in more general situations (Theorem \ref{dolbb}).
Moreover,  in such situations, we provide  
Dolbeault analogues of our results, unifying the Console-Fino technique and the Kasuya technique on de Rham cohomology (Theorem \ref{semidirect_prod}).
In particular, we show that for a complex parallelizable solvmanifold $(G/\Gamma, J)$ (i.e. $(G,J)$ is a complex Lie group), there exists a finite index subgroup $\tilde \Gamma$ of $\Gamma$ and  a complex  Lie algebra $\breve\g$ such that the Dolbeault cohomology of a complex parallelizable solvmanifold $(G/\tilde \Gamma, J)$ is isomorphic to the Dolbeault cohomology of the complex Lie algebra $\breve\g$
(Corollary \ref{parall}).

\section{Preliminaries}

Solvmanifolds $G/\Gamma$ are determined up to diffeomorphisms by their fundamental groups (which coincide with their lattices $\Gamma$). This can be formulated by the following 

\begin{theorem} \cite[Theorem 3.6]{Raghunathan}  \label{diffeo} If $\Gamma_1$ and $\Gamma_2$ are lattices in simply connected solvable Lie groups $G_1$ and $G_2$,  respectively, and $\Gamma_1$ is isomorphic to $\Gamma_2$, then $G_1 / \Gamma_1$ is diffeomorphic to $G_2 / \Gamma_2$.
\end{theorem}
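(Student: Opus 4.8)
The plan is to produce a $\phi$-equivariant diffeomorphism of the universal covers and then descend it to the quotients. First I would record that a simply connected solvable Lie group is diffeomorphic to a Euclidean space, so each $G_i/\Gamma_i$ is a closed aspherical manifold with fundamental group $\Gamma_i$, and $G_i$ is its universal cover with deck group $\Gamma_i$ acting by left translations. Thus it suffices to build a diffeomorphism $F\colon G_1\to G_2$ satisfying $F(\gamma x)=\phi(\gamma)F(x)$ for the given isomorphism $\phi\colon\Gamma_1\to\Gamma_2$; such an $F$ automatically descends to a diffeomorphism $G_1/\Gamma_1\to G_2/\Gamma_2$.

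Next I would exploit the fact that the nilpotent part of each group is detectable from $\Gamma_i$ alone. Writing $N_i$ for the nilradical of $G_i$, the intersection $\Gamma_i\cap N_i$ is a lattice in $N_i$, the quotient $\Gamma_i/(\Gamma_i\cap N_i)$ is a lattice $\cong\Z^{k}$ in the abelian group $G_i/N_i\cong\R^{k}$, and $\Gamma_i\cap N_i$ agrees, up to commensurability, with the Fitting subgroup of the polycyclic group $\Gamma_i$. Since the Fitting subgroup is characteristic, $\phi$ carries $\Gamma_1\cap N_1$ onto $\Gamma_2\cap N_2$ and hence induces an isomorphism $\bar\phi\colon\Gamma_1/(\Gamma_1\cap N_1)\to\Gamma_2/(\Gamma_2\cap N_2)$. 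By Malcev rigidity the restriction $\phi|_{\Gamma_1\cap N_1}$ extends uniquely to a Lie group isomorphism $N_1\to N_2$, while $\bar\phi$, being an isomorphism $\Z^{k}\to\Z^{k}$, extends linearly to $\R^{k}\to\R^{k}$.

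I would then reassemble these pieces along the Mostow fibration $N_i/(\Gamma_i\cap N_i)\to G_i/\Gamma_i\to (G_i/N_i)/(\Gamma_i/(\Gamma_i\cap N_i))$. Choosing smooth sections of $G_i\to G_i/N_i\cong\R^{k}$, I would define $F$ by combining the Malcev isomorphism on the nilpotent fibers with the linear map on the Euclidean base, and then verify equivariance. The point that makes this delicate is that $G_1$ and $G_2$ need not be isomorphic as Lie groups: the two extensions of $\R^{k}$ by $N_i$ may carry different cohomology classes and different actions, so $F$ can at best be a diffeomorphism, never a homomorphism.

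The hard part is precisely controlling this discrepancy while preserving equivariance. The semisimple part of the $\R^{k}$-action on the nilradical is constrained by the existence of the lattice and is in fact determined by $\Gamma_i$ (this is the content of Mostow's analysis underlying the Mostow condition invoked in the Introduction), so the semisimple parts of the two actions are intertwined by the maps already constructed. What remains is the unipotent discrepancy between the two extensions; because the relevant fibers and base are contractible, this can be absorbed by interpolating the cocycles, yielding a genuine equivariant diffeomorphism $F$. Passing to the quotient then gives the desired diffeomorphism $G_1/\Gamma_1\cong G_2/\Gamma_2$.
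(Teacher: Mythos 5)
There is a genuine gap, and it sits exactly at the step your whole construction rests on: the claim that $\Gamma_i\cap N_i$ is (up to commensurability) the Fitting subgroup of $\Gamma_i$, hence characteristic, so that $\phi$ carries $\Gamma_1\cap N_1$ onto $\Gamma_2\cap N_2$. This is false, and the paper itself contains counterexamples. In Example~\ref{ex-mod}, $G=\R\ltimes_{\phi}\R^{2}$ with $\phi(t)$ a rotation by $\pi t$ has the lattice $\Gamma_{2}=2\Z\ltimes\Z^{2}$, which is abstractly isomorphic to $\Z^{3}$; its Fitting subgroup is all of $\Gamma_{2}$, while $\Gamma_{2}\cap N=\Z^{2}$ has infinite index, so the two are not commensurable. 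Worse, $\Z^{3}$ is also a lattice in $\R^{3}$, and the abstract isomorphism $\Gamma_{2}\cong\Z^{3}$ sends a rank-$2$ nilradical intersection to a rank-$3$ one. The Nakamura manifold (Example~\ref{naka}) exhibits the same phenomenon with a single subgroup: $\Gamma$ is simultaneously a lattice in $G$ (nilradical $\C^{2}$, so $\Gamma\cap N_{1}\cong\Z^{4}$) and in the modification $\tilde G$ (nilradical $\cong\R^{5}$, so $\Gamma\cap N_{2}\cong\Z^{5}$). The intersection of a lattice with the nilradical is simply not an invariant of the abstract group $\Gamma$; it depends on how $\Gamma$ sits in the ambient group. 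Once this step fails, everything downstream collapses: there is no induced isomorphism $\bar\phi$ of the bases of the Mostow fibrations, Malcev rigidity cannot be applied fiberwise (the fibers need not even have the same dimension), and the final ``interpolate the unipotent cocycles'' step has nothing to interpolate between, since the two fibrations are not matched by $\phi$ at all.

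This failure is not a repairable technicality; it is the entire difficulty of the theorem, and it is why the paper's machinery (Witte's superrigidity, the nilshadow, the $S$-modifications of Section~\ref{modif}) exists in the first place. The paper does not prove the statement itself but cites Mostow's theorem as in \cite[Theorem 3.6]{Raghunathan}, whose proof does not and cannot proceed by matching nilradicals; roughly, one must first replace the groups by modifications (or embed everything in a common algebraic hull) so that the lattices become ``compatibly placed,'' and only then can an equivariant diffeomorphism be constructed. If you want to salvage your outline, the correct first move is the one the paper makes after Theorem~\ref{Mod11}: modify $G_1$ and $G_2$ (killing compact torus parts of ${\mathcal A}(\Ad_{G_i}(G_i))$) so that the Mostow condition holds, where extension of lattice isomorphisms becomes available; but at that point you are reproducing Witte's argument rather than giving an elementary alternative.
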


\subsection{Constructions of ${\rm Ad}_{s}$ and Kasuya's result} 
Let $\frak g$ be a solvable Lie algebra and $\frak n$ the nilradical of $\frak g$.  By Proposition III.1.1 in \cite{DER} there exists a vector space $V \cong \R^k$ such that ${\mathfrak g} = V \oplus \mathfrak n$ as direct sum of vector spaces and ${\mbox {ad}}(A)_s (B) =0$,  for any $A, B \in V$, where ${\mbox {ad}} (A)_s$  denotes  the semisimple part of ${\mbox {ad}}(A)$.

Like in \cite{Kas} one can define  the map 
$$
\ad_s: \frak g \rightarrow \Der (\frak g)
$$
by $\ad_s (A + X)(Y) =  (\ad_A)_s (Y)$, for $A \in V$ and $X \in \frak n$ and $Y\in \g$.

The map  $\ad_s$ is linear and 
$$
[\ad_s (\frak g), \ad_s (\frak g)] =0.
$$
Since the commutator $\frak g^1= [\frak g, \frak g]$ is contained in the nilradical $\frak n$ of $\frak g$, the map $\ad_s: \frak g \rightarrow \Der (\frak g)$ is a representation of $\frak g$ and the image $\ad_s (\frak g)$ is abelian and consists of semisimple elements. 

Note that, since $\ad (A)_s (B) =0$  for any $A, B \in V$, the vector space $V$ is a trivial sub-module of $\g$ which is a complement to $\cn$.

We will denote by $$\Ad_s: G \rightarrow \Aut(\frak g)
$$
the extension of the map $\ad_s$ to $G$. By the previous properties, it follows that $\Ad_s (G)$ is diagonalizable.

Let $T = {\mathcal A} (\Ad_s (G))$ be  the Zariski closure of $\Ad_s (G)$ in  $\Aut({\frak g}_{\C})$. Then $T$ is diagonalizable and it is a torus  in ${\mathcal A} (\Ad_G (G))$.

As  in \cite[Proof of Proposition 3.2]{Kas}, we have the following lemma.
\begin{lemma}  The Zariski closure $T = {\mathcal A} (\Ad_s (G))$ of $\Ad_s (G)$  is a  maximal torus of the Zariski closure  ${\mathcal A} (\Ad_G (G))$ of $\Ad_G (G)$. Moreover, ${\mathcal A} (\Ad_G (G)) = T\ltimes U$, where $U$  is  the unipotent radical of ${\mathcal A} (\Ad_G (G))$ and  
we have  $\Ad_s=p\circ \Ad$,  with   $p:{\mathcal A} (\Ad_G (G)) = T \ltimes U  \to T $  the projection on the maximal torus $T$.
\end{lemma}
\begin{proof}
It is known that  the map
$$
\phi: V \oplus \frak n \rightarrow G, A + X \mapsto \exp(A) \exp (X)
$$
is a global diffeomorphism (see \cite[Lemma 3.3]{Dek}).
For  $A\in V$, we consider   the Jordan decomposition ${\rm Ad}(\exp (A))=\exp ({\rm ad} (A)_{s}) \exp ({\rm ad} (A)_{n})$ where ${\rm ad} (A)_{n} $ is the nilpotent part of ${\rm ad} (A)$.
As  in \cite{Borel}, we have $\exp ({\rm ad} (A)_{s}),  \exp ({\rm ad} (A)_{n})\in {\mathcal A} (\Ad_G (G))$.
For $g=\exp(A) \exp (X)\in G$, we get
\[\Ad_{g}=\exp ({\rm ad} (A)_{s}) \exp ({\rm ad} (A)_{n})\exp  (X)\]
and here  $\exp({\rm ad} (A)_{s})=(\Ad_{s})_g$.
Let $U$ be the unipotent radical of ${\mathcal A} (\Ad_G (G))$.
Then we get  $\exp ({\rm ad} (A)_{s})=(\Ad_{s})_g\in T$ and $\exp ({\rm ad} (A)_{n})\exp  (X)\in U$.
Hence we have $\Ad_{g}\in T\cdot U$ and so $\Ad_G (G)\subset T\cdot U\subset {\mathcal A} (\Ad_G (G))$.
Thus $T\cdot U={\mathcal A} (\Ad_G (G))$ and $T$ is a maximal torus of ${\mathcal A} (\Ad_G (G))$.
As a consequence we obtain $T\cdot U=T\ltimes U$ and for the projection $p:T\ltimes U\to T$, we obtain
\[p(\Ad_{g})=p(\exp ({\rm ad} (A)_{s}) \exp ({\rm ad (A)}_{n})\exp  (X))=\exp ({\rm ad} (A)_{s})=(\Ad_{s})_g.
\]
Hence the lemma follows.
\end{proof}

Let $G$ be a simply connected solvable Lie group with a  lattice $\Gamma$  and $\frak g$ the  Lie algebra of $G$. Take a basis $X_1, \ldots, X_n$ of the complexification $\frak g_{\C}$ of $\frak g$ such that $\Ad_s$ is represented by diagonal matrices as
$$
({\Ad_s})_g = \diag (\alpha_1 (g), \ldots, \alpha_n (g))\, , 
$$
for any $g \in G$, i.e. $({\Ad_s})_g X_i = \alpha_i (g) X_i$ for  the characters $\alpha_i$ of $G$. Let $x_1, \ldots, x_n$ be the dual basis of $X_1, \ldots, X_n$.

We consider the sub-DGA $A^{\ast}_{\Gamma}$ of the de Rham complex $A^{\ast}(G/\Gamma)\otimes \C$ which is given by 
\[
A^{p}_{\Gamma}
={\rm span} \left\langle \alpha_{I} x_{I} {\Big \vert} \begin{array}{cc}I\subset \{1,\dots,n\},\\  (\alpha_{I})_{\vert_{\Gamma}}=1 \end{array}\right\rangle.
\]
where
for a multi-index $I=\{i_{1},\dots ,i_{p}\}$ we write $x_{I}=x_{i_{1}}\wedge\dots \wedge x_{i_{p}}$,  and $\alpha_{I}=\alpha_{i_{1}}\cdots \alpha_{i_{p}}$.

\begin{theorem}[\cite{Kas}]\label{KKK}
The inclusion 
\[A_{\Gamma}^{\ast}\subset A^{\ast}(G/\Gamma)\otimes \C
\]
induces a cohomology isomorphism.
\end{theorem}

Since the characters $\left\{ \alpha_{I} {\Big \vert} \begin{array}{cc}I\subset \{1,\dots,n\},\\  (\alpha_{I})_{\vert_{\Gamma}}=1 \end{array}\right\}$ are obstructions for the  invariance  of elements of the DGA $A_{\Gamma}^{\ast}$, we have the following theorem.

\begin{theorem}[\cite{Kas}] Let $G$ be a simply connected solvable Lie group with a  lattice $\Gamma$  and $\frak g$ the  Lie algebra of $G$. Suppose that the following condition holds: \newline for any $I \subset \{ 1, \ldots, n \}$   if  the  product  $\alpha_{I}$ is non-trivial, then the restriction of $\alpha_{I} \vert_{\Gamma}$ in $\Gamma$ is also non-trivial. \newline Then the isomorphism
$$
H^* (G/\Gamma, \C) \cong H^* (\frak g_{\C})
$$
holds.
\end{theorem}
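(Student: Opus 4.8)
The plan is to realize $H^*(\cg_\C)$ as the cohomology of an explicit finite-dimensional complex of forms on $G$ built from the characters $\alpha_i$, and then to show that this complex computes $H^*(G/\Gamma,\C)$, the hypothesis on products of characters being exactly what makes the second step go through.

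First I would pass to complex coefficients and identify $H^*(G/\Gamma,\C)$ with the cohomology of the $\Gamma$-invariant forms on $G$. The characters $\alpha_i\colon G\to\C^*$ are group homomorphisms, and since $\ad_s$ vanishes on $\cn$ and $[\cg,\cg]\subset\cn$, each $\alpha_i$ is trivial on the nilradical $N$ and factors through the torus $G/N$. Writing $x_1,\dots,x_n$ for the left-invariant $1$-forms dual to $X_1,\dots,X_n$, and setting $\alpha_I=\prod_{i\in I}\alpha_i$, $x_I=x_{i_1}\w\cdots\w x_{i_p}$ for $I=\{i_1<\cdots<i_p\}$, I introduce the finite-dimensional space
\[
A^*=\big\langle\, \alpha_I^{-1}\,x_I \;:\; I\subset\{1,\dots,n\}\,\big\rangle_\C\subset \Omega^*(G)\otimes\C.
\]

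The key structural point is that $A^*$ is a subcomplex and that the twist $x_I\mapsto\alpha_I^{-1}x_I$ is an isomorphism of cochain complexes $(\wedge^*\cg^*_\C,d)\xrightarrow{\ \sim\ }(A^*,d)$. Two facts drive this. First, because the semisimple part $(\ad_A)_s$ of a derivation is again a derivation, the $\ad_s$-weights are additive under the bracket: if the structure constant $c^i_{jk}$ is nonzero, i.e. $X_i$ occurs in $[X_j,X_k]$, then $\alpha_i=\alpha_j\alpha_k$. Hence in the Chevalley--Eilenberg part of $d(\alpha_I^{-1}x_I)$, replacing $x_i$ by $dx_i=-\tfrac12\sum c^i_{jk}x_j\w x_k$ turns the coefficient $\alpha_I^{-1}$ into $\alpha_J^{-1}$ for the new index set $J$, so the result stays in $A^*$. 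Second, since $V$ is a weight-zero (trivial) submodule, $d\log\alpha_i$ annihilates $\cn$ and is a combination of the $x_k$ dual to $V$, all of which carry the trivial character; thus the extra term $-(d\log\alpha_I)\w x_I$ also preserves $\alpha_I^{-1}$. Together these show that $d$ preserves $A^*$ and that the twist intertwines $d$ with the Chevalley--Eilenberg differential, giving $H^*(A^*)\cong H^*(\cg_\C)$ with no hypothesis at all.

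The heart of the argument, and the step I expect to be the main obstacle, is to prove that the $\Gamma$-invariant data underlying this picture computes the de Rham cohomology, i.e. $H^*(A^*)\cong H^*(G/\Gamma,\C)$. Here I would use the Mostow fibration $N/(\Gamma\cap N)\to G/\Gamma\to G/N\Gamma$: Nomizu's theorem computes the cohomology of the nilmanifold fibre by invariant forms, and what remains is to control the torus base together with the monodromy encoded by the $\alpha_i$. The forms $\alpha_I^{-1}x_I$ are not individually $\Gamma$-invariant --- under $\Gamma$ each is multiplied by a factor $\alpha_I(\gamma)^{\pm1}$ --- so to descend them to $G/\Gamma$ one must match the characters of $G$ against those of $\Gamma$ through the algebraic hull $\mathcal{A}(\Ad_G(\Gamma))$, in the spirit of the Mostow condition. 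This is exactly where the stated hypothesis enters: requiring that every nontrivial product $\alpha_I$ restrict nontrivially to $\Gamma$ prevents a closed form $\alpha_I^{-1}x_I$ with $\alpha_I\neq1$ from becoming invariant and producing a spurious class, and dually prevents any genuine Chevalley--Eilenberg class from being lost; equivalently, it forces $\mathcal{A}(\Ad_s(\Gamma))$ to fill out the whole torus $T=\mathcal{A}(\Ad_s(G))$, so that the twisted invariant complex is quasi-isomorphic to $A^*$. I would phrase this as an identification, under the hypothesis, of the relevant page of the Mostow spectral sequence with $A^*$, after which assembling $H^*(G/\Gamma,\C)\cong H^*(A^*)\cong H^*(\cg_\C)$ is immediate. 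I expect the closure of $A^*$ under $d$ and the Nomizu reduction on the fibre to be routine, while the character-matching across the lattice is the delicate point for which the hypothesis is tailor-made.
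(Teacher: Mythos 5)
Your first step contains a genuine error: the twist $x_I\mapsto\alpha_I^{-1}x_I$ is \emph{not} a map of cochain complexes from $\bigl(\bigwedge^*\cg_{\C}^*,d_{CE}\bigr)$ to $(A^*,d)$. Your own decomposition shows why: $d(\alpha_I^{-1}x_I)=\alpha_I^{-1}dx_I-(d\log\alpha_I)\wedge(\alpha_I^{-1}x_I)$, and while both terms lie in $A^*$ (so $A^*$ is indeed $d$-closed, which is all that your ``two facts'' actually prove), only the first term is the image of $d_{CE}x_I$ under the twist; the second is an extra summand that the Chevalley--Eilenberg differential never produces. Concretely, take $\cg=\langle X,Y\rangle$ with $[X,Y]=Y$, so that $x=dt$, $y=e^{-t}ds$, $\alpha_Y=e^{t}$: the twist sends $y$ to $e^{-2t}ds$, and $d(e^{-2t}ds)=-2\,e^{-2t}dt\wedge ds=2\cdot\mathrm{twist}(d_{CE}\,y)$, so the square does not commute. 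What the span of such twisted monomials really is, is the Chevalley--Eilenberg complex of a \emph{modification} of $\cg$ (with exponent $+1$ it is exactly the nilshadow $\frak u$, whose cohomology differs from $H^*(\cg_{\C})$ in general --- that discrepancy is the whole point of this circle of ideas), so an identification ``with no hypothesis at all'' cannot be had this cheaply. The isomorphism $H^*(A^*)\cong H^*(\cg_{\C})$ for your complex does in fact hold, but for a different reason: the connected torus $T=\mathcal{A}(\Ad_s(G))$ acts semisimply on both complexes commuting with the differentials, and it acts trivially on cohomology (it lies in $\mathcal{A}(\Ad_G(G))$, and inner automorphisms act trivially on Lie algebra cohomology); hence the common weight-zero subcomplex $\mathrm{span}\{x_I : \alpha_I=1\}$ includes quasi-isomorphically into both.

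Second, and more seriously, the step you yourself call the heart is not proved, and the one concrete mechanism you propose for it is false. The hypothesis does \emph{not} force $\mathcal{A}(\Ad_s(\Gamma))$ to fill out $T$: it constrains only the finitely many products $\alpha_I=\prod_{i\in I}\alpha_i$ with exponents in $\{0,1\}$, whereas Zariski-density of $\Ad_s(\Gamma)$ in $T$ would require every nontrivial integer word in the $\alpha_i$ (for instance $\alpha_1\alpha_2^{-1}$) to restrict nontrivially to $\Gamma$. Indeed, the remark immediately following the theorem in the paper states that there are solvmanifolds satisfying the hypothesis for which $G$ and $\Gamma$ do \emph{not} satisfy the Mostow condition; if your ``equivalently'' were correct, the theorem would collapse to a case already covered by Mostow's theorem and that remark would be false. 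The paper's actual route (quoting \cite{Kas}) is different: one works with the nilshadow forms $\alpha_I x_I$ and the subcomplex $A^*_{\Gamma}=\mathrm{span}\{\alpha_I x_I : \alpha_I\vert_{\Gamma}=1\}$, whose elements genuinely descend to $G/\Gamma$; the essential input, proved in \cite{Kas} and only cited here, is that the inclusion $A^*_{\Gamma}\subset A^*(G/\Gamma)$ is a quasi-isomorphism \emph{without any hypothesis on} $\Gamma$, and the hypothesis is then used only to identify $A^*_{\Gamma}$ with the weight-zero complex $\mathrm{span}\{x_I : \alpha_I=1\}$, which computes $H^*(\cg_{\C})$ as above. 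Your Mostow-fibration/Nomizu sketch would have to reprove that quasi-isomorphism; as written it neither does so nor correctly isolates where the hypothesis enters.
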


As a consequence, we obtain the isomorphism $H^*_{dR} (M) \cong H^* (\frak g)$.

\begin{rem} There exist  solvmanifolds which satisfy the assumption of the previous theorem, but $G$ and $\Gamma$ do not satisfy the Mostow condition (see for instance Example 3 in \cite{Kas}).
\end{rem}

\subsection{Nilshadows and  Witte modification}\label{nilwitte}

When $G$ and $\Gamma$ do not satisfy the Mostow condition, one may apply the Witte modification \cite{Witte}, which is a variation of  the construction of the nilshadow due to  Auslander and Tolimieri \cite{AT}.  
In general, one has that 
${\mathcal A} (\Ad_G (G)) = T \ltimes U$ is not unipotent, where $T$ is a non trivial  maximal torus of  ${\mathcal A} (\Ad_G (G))$. Moreover, we have $T = T_{split} \times T_{cpt}$,  where by $T_{split}$ we denote the maximal $\R$-split subtorus of $T$ and  by   $T_{cpt}$  the maximal compact subgroup of $T$.  The basic idea  for the construction of  the nilshadow is to kill $T$ in order to obtain  a nilpotent group. 
In order to do this we  define, following \cite{Witte},  a natural homomorphism $\pi: G \to T$, which is the composition of the homomorphisms:
 \begin{equation}\label{*} \pi: G \stackrel{\Ad} \to {\mathcal A} (\Ad_G (G)) \stackrel{{\mbox{\tiny projection}}}  \longrightarrow T,
\end{equation}
 and the map
$$
 \Delta: G \to T \ltimes G,  g \mapsto \Delta(g) = (\pi(g)^{-1}, g).
$$
The traditional nilshadow construction kills the entire maximal torus $T$ in order to get the nilpotent Lie group ${\Delta} (G)$.
It is known that ${\Delta} (G)$ is the nilradical of $ T \ltimes G$,  which is called the \emph{ nilshadow} of $G$.

 Witte  introduced in \cite{Witte} a variation of the nilshadow construction, killing only a subtorus $S$ of $T_{cpt}$.
It is well known that, for every subtorus $S$ of a compact torus $T$, there is a torus $S^{\perp}$ complementary to $S$ in $T$, i.e. such that $T = S \times S^{\perp}$.

As a consequence  of \cite[Proposition 8.2]{Witte}   the following  was proved

\begin{theorem}\label{Mod11} \cite{Guan,CF}  Let $M = G / \Gamma$ be a  solvmanifold,  compact  quotient of  a simply connected solvable Lie group $G$  by a lattice  $\Gamma$,  and let $T_{cpt}$ be a   compact torus such that  $$T_{cpt}  {\mathcal A} (\Ad_G (\Gamma)) = {\mathcal A} (\Ad_G (G)).$$ Then there exists a  subgroup $\tilde \Gamma$ of finite index in $\Gamma$ and a  simply connected   normal subgroup $\tilde G$ of $T_{cpt} \ltimes G$  such that  $${\mathcal A} (\Ad_{\tilde G}  (\tilde \Gamma)) = {\mathcal A} (\Ad_{\tilde G} ( \tilde G)).$$ Therefore, 
 $ \tilde G /  \tilde \Gamma$ is diffeomorphic to $G /  \tilde \Gamma$ and $H_{dR}^* (G /  \tilde \Gamma) \cong H^*(\tilde {\mathfrak g})$, where $\tilde {\mathfrak g}$ is the Lie algebra of $\tilde G$.

\end{theorem}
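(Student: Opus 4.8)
The plan is to perform Witte's nilshadow construction using only the compact torus $T_{cpt}$, and then to verify the Mostow condition directly for the modified group, after which Theorem~\ref{diffeo} and the invariance of de Rham cohomology under the Mostow condition finish the proof (this is essentially the argument of \cite{CF}). First I would compose the homomorphism $\pi$ of $(*)$ with the projection $T=T_{split}\times T_{cpt}\to T_{cpt}$ to obtain a homomorphism $\pi_{cpt}\colon G\to T_{cpt}$, let $T_{cpt}\subset\Aut(\g)$ act on $G$, and set
$$
\tilde G:=\Delta(G)\subset T_{cpt}\ltimes G,\qquad \Delta(g)=(\pi_{cpt}(g)^{-1},g).
$$
Since $T$ is abelian one has $\pi_{cpt}(\sigma_t(g))=\pi_{cpt}(g)$ for every $t\in T_{cpt}$, and a short computation then shows that $\Delta(G)$ is closed under the product and the inverse of $T_{cpt}\ltimes G$ and is normalized both by $T_{cpt}$ and by $G$; hence $\tilde G$ is a normal subgroup. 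As projection to the second factor is a diffeomorphism $\tilde G\to G$, the group $\tilde G$ is simply connected.

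Next I would compute the adjoint representation of $\tilde G$. Writing $\Ad_g=\pi_{split}(g)\,\pi_{cpt}(g)\,u(g)$ according to the decomposition $\mathcal A(\Ad_G(G))=(T_{split}\times T_{cpt})\ltimes U$ established above, conjugation by $\Delta(g)$ acts on $\tilde{\mathfrak g}\cong\g$ as $\pi_{cpt}(g)^{-1}\Ad_g=\pi_{split}(g)\,u(g)$, so the $T_{cpt}$-component is cancelled automatically for every $g$, and in particular $\Ad_{\tilde G}(\Delta(\gamma))=\pi_{split}(\gamma)\,u(\gamma)$ for $\gamma\in\Gamma$ as well. Thus the maximal torus of $\mathcal A(\Ad_{\tilde G}(\tilde G))$ is $T_{split}$. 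The main obstacle is then to produce an honest lattice in $\tilde G$, since the modified product need not preserve $\Gamma$; here I would invoke \cite[Proposition 8.2]{Witte}, whose superrigidity for lattices in solvable Lie groups provides a finite index subgroup $\tilde\Gamma\subset\Gamma$ for which $\Delta(\tilde\Gamma)$ is a lattice in $\tilde G$ abstractly isomorphic to $\tilde\Gamma$. Finally, because $\Ad_{\tilde G}$ deletes the compact directions, the hypothesis $T_{cpt}\,\mathcal A(\Ad_G(\Gamma))=\mathcal A(\Ad_G(G))$ translates, using that passing to a finite index subgroup does not change the identity component of the Zariski closure, into
$$
\mathcal A(\Ad_{\tilde G}(\tilde\Gamma))=\mathcal A(\Ad_{\tilde G}(\tilde G))=T_{split}\ltimes U,
$$
which is the Mostow condition for $\tilde G/\tilde\Gamma$.

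To assemble the conclusions I would argue as follows. As $\tilde\Gamma$ and $\Delta(\tilde\Gamma)$ are isomorphic lattices in the simply connected solvable Lie groups $G$ and $\tilde G$, Theorem~\ref{diffeo} gives a diffeomorphism $G/\tilde\Gamma\cong\tilde G/\tilde\Gamma$. Because the Mostow condition holds for $\tilde G/\tilde\Gamma$, its de Rham cohomology is computed by the invariant forms (see \cite{Mostow2} and \cite[Corollary 7.29]{Raghunathan}), so that
$$
H^*_{dR}(G/\tilde\Gamma)\cong H^*_{dR}(\tilde G/\tilde\Gamma)\cong H^*(\tilde{\mathfrak g}),
$$
which is the assertion.
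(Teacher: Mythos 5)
There is a genuine gap, and it sits exactly at the step you yourself call ``the main obstacle''. Your modification kills the \emph{whole} compact torus $T_{cpt}$, whereas the construction that works (the one in the paper, following Witte) kills only a subtorus $S_{cpt}$ \emph{complementary} to a maximal compact torus $S^{\perp}_{cpt}$ of ${\mathcal A}(\Ad_G(\tilde\Gamma))$ inside $T_{cpt}$, after first passing to a finite index subgroup $\tilde\Gamma$ making this Zariski closure connected. The difference is decisive. In the paper's construction one has $\sigma(\tilde\Gamma)=\{e\}$, because $\Ad_G(\tilde\Gamma)$ lies in $(T_{split}\times S^{\perp}_{cpt})\ltimes U$, which projects trivially to $S_{cpt}$; hence $\Delta$ restricts to the identity on $\tilde\Gamma$, so $\Delta(\tilde\Gamma)=\tilde\Gamma$ is literally a subgroup of $\tilde G$, it is a lattice there, and the Mostow condition then follows from \cite[Proposition 4.10]{Witte} ($S^{\perp}_{cpt}$ is a maximal compact torus of ${\mathcal A}(\Ad_{\tilde G}(\tilde G))$ and is contained in ${\mathcal A}(\Ad_{\tilde G}(\tilde\Gamma))$). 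In your construction, whenever ${\mathcal A}(\Ad_G(\Gamma))$ has a nontrivial compact part --- which the hypothesis of the theorem allows, and which cannot be removed by passing to a finite index subgroup, as you yourself note --- the homomorphism $\pi_{cpt}$ is nontrivial on every finite index subgroup $\tilde\Gamma$, so $\Delta\vert_{\tilde\Gamma}$ is not a homomorphism and $\Delta(\tilde\Gamma)$ is in general not even a subgroup of $\tilde G$. The appeal to \cite[Proposition 8.2]{Witte} is a misreading: that proposition concerns precisely the modification by a torus complementary to the compact part of the lattice's closure; it does not provide a lattice isomorphic to $\tilde\Gamma$ inside your fully modified group.

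Indeed no argument could, because the claim is false. Let $B\in SL(4,\Z)$ be the companion matrix of $x^{4}-x^{3}+3x^{2}-x+1$: it is semisimple with eigenvalues $\lambda,\bar\lambda,\lambda^{-1},\bar\lambda^{-1}$, where $\lambda$ is non-real, $|\lambda|=r\neq 1$, and $\lambda/|\lambda|$ is not a root of unity. Write $B=\exp X$ with $X$ real semisimple, set $G=\R\ltimes_{\phi}\R^{4}$ with $\phi(t)=\exp(tX)$ and $\Gamma=\Z\ltimes_{B}\Z^{4}$. Here $G$ and $\Gamma$ already satisfy the Mostow condition, ${\mathcal A}(\Ad_G(G))$ has a one-dimensional maximal compact torus contained in ${\mathcal A}(\Ad_G(\Gamma))$, and the hypothesis of the theorem holds with $T_{cpt}$ that circle (the paper's construction then gives $S_{cpt}$ trivial and $\tilde G=G$). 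Your $\tilde G$ is instead $\R\ltimes_{\psi}\R^{4}$ with $\psi(t)$ having only positive real eigenvalues $r^{\pm t}$. If a lattice of $\tilde G$ were isomorphic to a finite index subgroup $\Z\ltimes_{B^{k}}L$ of $\Gamma$, the isomorphism would carry Fitting subgroup to Fitting subgroup, forcing $B^{\pm k}$ to be conjugate in $GL(4,\R)$ to some $\psi(t_{0})$; this is impossible, since every $B^{k}$ ($k\neq 0$) has non-real eigenvalues while $\psi(t_{0})$ does not. So no finite index subgroup of $\Gamma$ is a lattice in your $\tilde G$, and both your lattice step and the claimed equality ${\mathcal A}(\Ad_{\tilde G}(\tilde\Gamma))={\mathcal A}(\Ad_{\tilde G}(\tilde G))$ collapse. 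Once the construction is replaced by the $S_{cpt}$-modification described above, your concluding assembly (Theorem~\ref{diffeo} together with Mostow's theorem under the Mostow condition) is correct and coincides with the paper's.
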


\smallskip

The idea of the proof  in \cite{CF}  is to construct  a   modification  $\tilde G$ of the solvable Lie group $G$ that we will now review shortly.  By \cite[Theorem 6.11, p. 93]{Raghunathan} we may consider a finite
    index subgroup   $\tilde \Gamma$ of $\Gamma$ such that  ${\mathcal A} ( \Ad_G (\tilde \Gamma))$ is connected. Let $T_{cpt}$ be a maximal  compact torus   of  ${\mathcal A} (\Ad_G (G))$  which contains a maximal compact torus $S_{cpt}^{\perp}$ of ${\mathcal A} (\Ad_G (\tilde \Gamma))$. There is  a natural
projection from ${\mathcal A} (\Ad_G (G))$ to $T_{cpt}$, given by the splitting ${\mathcal A} (\Ad_G (G)) = (A \times  T_{cpt}) \ltimes U,$ where $A$
is a maximal $\R$-split torus and  $U$ is the unipotent radical.

 Let $S_{cpt}$ be a subtorus of $T_{cpt}$ complementary to $S_{cpt}^{\perp}$ so that $T_{cpt} = S_{cpt} \times S_{cpt}^{\perp}$.
Let $\sigma$  be the composition of the homomorphisms:
$$
\sigma: G  \stackrel{\Ad}  \longrightarrow {\mathcal A} (\Ad_G (G))  \stackrel{{\mbox {\tiny projection}}}  \longrightarrow T_{cpt}  \stackrel{{\mbox {\tiny projection}}} \longrightarrow S_{cpt} \stackrel{ x \to x^{-1}}  \longrightarrow S_{cpt}. 
%\leqno{(*)}
$$
One may define the nilshadow map:
$$
\Delta: G \to S_{cpt} \ltimes G, g \mapsto (\sigma (g), g),
$$
which is not a homomorphism (unless $S_{cpt}$ is trivial and then $\sigma$ is trivial), but, with respect to the product on $G$, one has
$$
\Delta (ab) = \Delta ( \sigma(b^{-1})  a \sigma( b))  \,  \Delta (b),  \quad \forall a,b \in G
$$
and $\Delta (\gamma g) = \gamma \Delta (g)$, for every $\gamma \in \tilde \Gamma, g \in G$.
The nilshadow map $\Delta$ is a diffeomorphism from $G$ onto its image $\Delta (G)$ and then $\Delta (G)$ is simply connected. More explicitly, the product in $\Delta (G)$ is given by:
$$
\Delta (a) \Delta (b) = (\sigma (a), a)  \, (\sigma (b), b) = (\sigma (a) \sigma (b), \sigma (b^{-1})  a \sigma (b) \, b),
$$
for any $a, b \in G$.

By construction, ${\mathcal A} (\Ad_G (G))$ projects trivially  to $S_{cpt}$ and $\sigma ( \tilde \Gamma) = \{ e \}$. Therefore
$$\tilde \Gamma = \Delta ( \tilde \Gamma) \subset \Delta (G).
$$
Let $\tilde G = \Delta (G)$.
 By \cite[Proposition  4.10]{Witte}  $S_{cpt}^{\perp}$ is a maximal compact subgroup of ${\mathcal A} (\Ad_{\tilde G} (\tilde G))$ and $S_{cpt}^{\perp} \subset {\mathcal A} (\Ad_{\tilde G} (\tilde \Gamma))$, therefore  $\tilde G$ and $\tilde \Gamma$  satisfy the Mostow condition.

By using Theorem \ref{diffeo}, $G/\tilde \Gamma$ is diffeomorphic to $\tilde G / \tilde \Gamma$. Since $\tilde G$ and $\tilde \Gamma$  satisfy the Mostow condition, $H^*(\tilde G / \tilde \Gamma) \cong H^*(\tilde {\mathfrak g})$, so  
$H^*(G / \tilde \Gamma) \cong H^*(\tilde {\mathfrak g})$ and the inverse $\Delta^{-1}$ of the diffeomorphism 
$$
\Delta: G \to \tilde G,
$$
 induces a finite-to-one covering
map  $\Delta^*: \tilde G/ \tilde \Gamma \to G/ \Gamma$.

 \section{Modified solvable Lie groups and  $\Ad_s$-invariant geometry}\label{modif}
 
In this section we will define a modification of a  simply connected real solvable Lie group $G$ with respect to a sub-algebraic torus $S$ of the maximal torus $T = \mathcal A (\Ad_s (G))$, which extends the Witte modification $\Delta (G)=\tilde G$ described in the previous Section.

 \subsection{$S$-modification for any sub-algebraic torus $S\subset T$}
\begin{definition}\label{modified_product}
Let $G$ be a simply connected solvable Lie group.
Consider the $\C$-diagonalizable representation $\Ad_{s}:G\to \Aut(\g)$ and the Zariski-closure $T$ of $\Ad_{s}(G)$ in  $\Aut(\g)$.
Let $S\subset T$ be a sub-algebraic torus.
Consider the homomorphism
$\pi_{S}:G \stackrel{\pi} \to T\to S$ (where $\pi$ is the map in \eqref{*}).

Then we define the new product $\bullet_{S}$ on $G$ by
 \begin{equation}  \label{prodmodification} a\bullet_{S} b=a \pi_{S}(a)^{-1}(b) \end{equation}
for $a, b\in G$.
We denote by $G^{S}$ the Lie group $G$ endowed with the product $\bullet_S$ and call $G^{S}$ the $S$-modification of $G$.
\end{definition}

Note that, by definition, $G^S$ is still solvable.
By using Definition~\ref{modified_product}  and the  results in  \cite{Auslander}  we can show the following properties.

\begin{prop}\label{prop-mod} Let $G$ be a simply connected solvable Lie group and let $G^S$  the $S$-modification of $G$, with $S$ a sub-algebraic torus  of $T$. Then 

\begin{enumerate}

\item  If $S=T$, then  $G^{S}$ is the nilshadow of $G$.

\item  If $S=T_{cpt}$ (resp $T_{split}$), then $G^{S}$ is completely solvable (resp. of (I)-type).

\item  Suppose that $G$ has a lattice $\Gamma$.
If the restriction $\pi_{S}\vert_{\Gamma}$ is trivial, then $\Gamma$ is also a lattice in $G^{S}$ and hence we have a diffeomorphism $G/\Gamma\cong G^{S}/\Gamma$.

\item  
Suppose that  $G$ has a lattice $\Gamma$ and that  the Zariski-closure of $\Ad_{s}(\Gamma)$ is connected.
Take $S$ a subtorus complementary to a maximal compact torus of the Zariski-closure of $\Ad_{s}(\Gamma)$.
Then $G^{S}$ and $\Gamma$ satisfy  the Mostow condition.

\item  Suppose  that $G$ has a lattice $\Gamma$.
Then $G$ is of (I)-type if and only if there exists   a finite index subgroup $\tilde \Gamma$ of $\Gamma$ such that  the restriction $\Ad_{s}\vert_{\tilde\Gamma}$ is trivial.

\end{enumerate}

\end{prop}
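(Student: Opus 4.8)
The plan is to prove Proposition~\ref{prop-mod} item by item, since each statement concerns a different aspect of the $S$-modification. I would organize the proof around the key structural fact established in the preceding Lemma: the Zariski closure $\mathcal{A}(\Ad_G(G))$ decomposes as $T \ltimes U$ with $U$ unipotent and $T = \mathcal{A}(\Ad_s(G))$ a maximal torus that further splits as $T = T_{split} \times T_{cpt}$. Throughout, the crucial observation is that the new product $a \bullet_S b = a\, \pi_S(a)^{-1}(b)$ twists the original group multiplication by the inner-automorphism-valued map $\pi_S \colon G \to S \subset \Aut(\g)$, so that understanding $G^S$ reduces to understanding how $\pi_S$ interacts with $\Ad$, the lattice $\Gamma$, and the torus decomposition.

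For item (1), when $S = T$ the map $\pi_S = \pi$ kills the entire maximal torus, so I would verify directly that $\Ad_s$ becomes trivial on $G^T$, forcing $G^T$ to be nilpotent; comparing with the recipe reviewed at the end of Section~2, this is precisely the classical nilshadow $\Delta(G)$. For item (2), I would compute the semisimple part of $\ad$ for the modified bracket: killing $T_{cpt}$ removes exactly the compact (purely imaginary eigenvalue) directions, leaving only real eigenvalues, which is the definition of complete solvability; dually, killing $T_{split}$ removes the real-split part and leaves a group of (I)-type (all eigenvalues of $\Ad_s$ of modulus one). These two reduce to checking which characters $\alpha_i$ survive the twist. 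Item (3) is the direct analogue of the triviality condition used in Theorem~\ref{Mod11}: if $\pi_S\vert_\Gamma$ is trivial, then the twisting factor $\pi_S(\gamma)^{-1}$ is the identity for $\gamma \in \Gamma$, so $\Gamma$ multiplies identically under $\bullet_S$ and under the original product, whence $\Gamma$ is a subgroup of $G^S$; that $\Gamma$ remains a lattice follows because $\Delta$-type maps are diffeomorphisms (as in the construction reviewed above), giving the diffeomorphism $G/\Gamma \cong G^S/\Gamma$ via Theorem~\ref{diffeo}.

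Item (4) is where I expect the main work to lie, and it is essentially a repackaging of the Witte modification argument reviewed at the end of Section~2, now phrased intrinsically through $\mathcal{A}(\Ad_s(\Gamma))$ rather than $\mathcal{A}(\Ad_G(\Gamma))$. The plan is: assuming $\mathcal{A}(\Ad_s(\Gamma))$ is connected, choose $S$ complementary to its maximal compact torus inside $T$, so that $T = S \times S^\perp$ with $S^\perp \supset$ (the compact part of) $\mathcal{A}(\Ad_s(\Gamma))$. Then $\pi_S(\Gamma)$ lies in the compact part of $\mathcal{A}(\Ad_s(\Gamma))$, which projects trivially to $S$; hence $\pi_S\vert_\Gamma$ is trivial and item (3) applies. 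The substance is verifying the Mostow condition for $G^S$: one must show $\mathcal{A}(\Ad_{G^S}(\Gamma)) = \mathcal{A}(\Ad_{G^S}(G^S))$, which I would deduce from \cite[Proposition 4.10]{Witte} exactly as in the proof sketch of Theorem~\ref{Mod11}, identifying $S^\perp$ as a maximal compact subgroup of $\mathcal{A}(\Ad_{G^S}(G^S))$ contained in $\mathcal{A}(\Ad_{G^S}(\Gamma))$. The delicate point here is tracking how the Zariski closures transform under the twist by $\pi_S$, and this is where the results of \cite{Auslander} cited before the proposition would be invoked. Finally, for item (5), I would argue that $G$ is of (I)-type precisely when $T = T_{cpt}$, i.e. $T_{split}$ is trivial; the existence of a finite-index $\tilde\Gamma$ with $\Ad_s\vert_{\tilde\Gamma}$ trivial then follows because a compact torus containing a lattice image forces that image to be finite, so passing to a finite-index subgroup trivializes it, and conversely triviality of $\Ad_s\vert_{\tilde\Gamma}$ together with Zariski density forces $\mathcal{A}(\Ad_s(\Gamma))$, and hence the split part, to be compact.
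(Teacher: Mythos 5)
The paper itself gives no written proof of this proposition (it is stated as a consequence of Definition~\ref{modified_product} and the results of \cite{Auslander}), so your proposal has to stand on its own; items (1)--(3) are sound in outline (for (3) the point to make explicit is that $\gamma \bullet_S g = \gamma\,\pi_S(\gamma)^{-1}(g) = \gamma g$ for $\gamma \in \Gamma$, so the left $\Gamma$-action on $G^S$ coincides with that on $G$, giving discreteness and cocompactness at once, after which Theorem~\ref{diffeo} yields the diffeomorphism). The first genuine gap is in item (4). You decompose $T = S \times S^{\perp}$ with $S^{\perp}$ containing only the \emph{compact part} of $T' := \mathcal{A}(\Ad_s(\Gamma))$, and then claim that $\Ad_s(\Gamma)$ projects trivially to $S$. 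This is false in general: $\Ad_s(\gamma)$ has a nontrivial split component, which under your decomposition lies in $S$ (since $T_{split} \subset S$), so $\pi_S\vert_\Gamma \neq 1$ and $\Gamma$ is not even closed under $\bullet_S$, so item (3) cannot be invoked. The Nakamura manifold (Example~\ref{naka}) is a concrete counterexample to your prescription: there $T' = \R^{\#}$ is purely split, its maximal compact torus is trivial, so your rule would force $S = T$, whereas the correct choice is $S = S^1$. What is needed is $S^{\perp} \supset T'$ in its entirety, e.g.\ take $S \subset T_{cpt}$ complementary in $T_{cpt}$ to the compact part $T'_{cpt}$ of $T'$, and set $S^{\perp} = T_{split} \times T'_{cpt}$. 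Then $\pi_S\vert_\Gamma = 1$, the torus of $\mathcal{A}(\Ad_{G^S}(G^S))$ becomes $T_{split} \times T'_{cpt}$, and the Mostow condition reduces to the equality $T_{split} = T'_{split}$, i.e.\ to the Mostow--Witte theorem that $\mathcal{A}(\Ad_G(G)) = T_{cpt}\, \mathcal{A}(\Ad_G(\Gamma))$ for a suitable maximal compact torus (the two closures differ only by a compact torus). That theorem is the real content of (4); citing \cite[Proposition 4.10]{Witte} is the right instinct, but it must be applied to a correctly chosen $S$, and the input just stated should be isolated explicitly.

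The second, more serious, gap is the forward direction of item (5). Your justification, ``a compact torus containing a lattice image forces that image to be finite,'' is simply false: a finitely generated subgroup of a compact torus can be infinite (an irrational rotation generates a dense subgroup of $S^1$), and nothing in your argument uses that $\Gamma$ is a lattice rather than an abstract finitely generated group. Finiteness of $\Ad_s(\Gamma)$ when $G$ is of type (I) is an arithmetic fact. One correct route: $\gamma$ normalizes $\Gamma \cap N$, so $\Ad(\gamma)$ preserves the lattice in $\frak n$ spanned by $\log(\Gamma \cap N)$ and hence has integer characteristic polynomial on $\frak n$; type (I) puts all its roots on the unit circle; Kronecker's theorem then forces all eigenvalues to be roots of unity of degree at most $\dim \frak n$, hence of uniformly bounded order. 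Since $\Ad_s(\gamma)$ is semisimple, fixes $V$ pointwise, and has the same eigenvalues as $\Ad(\gamma)$, every element of the finitely generated abelian group $\Ad_s(\Gamma)$ has bounded order, so $\Ad_s(\Gamma)$ is finite and $\tilde\Gamma = \ker(\Ad_s\vert_\Gamma)$ works. (Alternatively one cites the structure results of \cite{Auslander}, which is exactly what the paper does.) Your converse direction is essentially correct, but note that passing from ``$\mathcal{A}(\Ad_s(\tilde\Gamma))$ trivial'' to ``$T_{split}$ trivial'' again uses the Mostow--Witte theorem above, not ``Zariski density'' as such.
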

\begin{proof}
\begin{enumerate}
\item  If  $S=T$,  we can easily check that $G^{S}$ is isomorphic to the nilshadow  that  we constructed in Section  \ref{nilwitte}.
\item If  $S=T_{cpt}$ (resp $T_{split}$), for the modified Lie group $G^{S}$, considering the map ${\rm Ad}_{s}: G^{S}\to \Aut(\g^{S})$,
 we can check ${\rm Ad}_{s}(G^{S})\subset T_{split}$ (resp. ${\rm Ad}_{s}(G^{S})\subset  T_{cpt}$)
 and hence  $G^{S}$ is completely solvable (resp. of (I)-type).
 \item The restriction  of  the product $\bullet_S$ on $\Gamma$ is not changed and hence  $\Gamma$ is a subgroup of $G^{S}$.
Obviously, $\Gamma$ is discrete and cocompact in $G^{S}$.
 \item See  Section  \ref{nilwitte} and \cite{CF}.
 \item  Note that the (I)-type  corresponds to the  \lq \lq type R" in \cite{Auslander}.
Denote by $N_G$ the nilshadow of $G$.
We have $T\ltimes G=T\ltimes N_G$.
If we suppose that $G$ is of (I)-type, then by the argument in
\cite[Chapter IV. 5.]{Auslander} and the inclusion $G\subset T\ltimes N_G$, we get that
$\Gamma\cap N_G$ is a finite index subgroup of $\Gamma$.
Take $\tilde \Gamma =\Gamma\cap N_G$.
Then the composition 
\[  \tilde \Gamma \stackrel{{\mbox{\tiny inclusion}}}  \to T\ltimes G= T\ltimes  N_G \stackrel{{\mbox{\tiny projection}}}  \longrightarrow T
\]
is trivial.
Now, this map is  $\Ad_{s}:\Gamma\to T$.
Hence  the restriction $\Ad_{s}\vert_{\tilde\Gamma}$ is trivial.
Conversely, suppose there exist a finite index subgroup $\tilde \Gamma$ of $\Gamma$ such that the restriction $\Ad_{s}\vert_{\tilde\Gamma}$ is trivial.
By \cite[Theorem 3.28]{Witte}, for the Zariski-closure $T_{\Gamma}$ of ${\rm Ad}_{s}(\Gamma)$ in $\Aut(\g)$, 
we have $T=T_{\Gamma}\cdot T_{cpt}$.
Therefore,  since $\Ad_{s}\vert_{\tilde\Gamma}$ is trivial, $T= T_{cpt}$ and hence $G$ is  of (I)-type.
\end{enumerate}

\end{proof}

\subsection{Modification and invariant cohomology}

Let $\g$ be the Lie algebra  of $G$ and $\g^{S}$ the Lie algebra of the $S$-modification $G^{S}$  with product $\bullet_{S}$ defined in \eqref{prodmodification}.

Denote by $L^{G}_{g}$ the left translation of $g\in G$ on $G$  and  by 
$L^{G^{S}}_{g}$ the left translation of $g$ in $G^S=(G, \bullet_S)$.
Then  $L^{G^{S}}_{g}=L^{G}_{g}\circ \pi_{S}(g)^{-1}$, for every $g \in G$.
Hence 
\[\g^{S}=\{\tilde X \in C^{\infty}(TG)\vert X\in T_{e}G,\; \; (\tilde X)_{g}=L^{G}_{g}\circ  \pi_{S}(g)^{-1} X\}.
\]

Since $\Ad_{s}: G\to \Aut(\g)$ is $\C$-diagonalizable, there exists a basis $X_{1},\dots,X_{n}$ of $\cg_\C=\g\otimes \C$ such that $\Ad_{s}=\diag(\alpha_{1},\dots,\alpha_{n})$ for some characters $\alpha_{1},\dots,\alpha_{n}$.
We also have $\pi_{S}=\diag(\beta_{1},\dots,\beta_{n})$  for some characters $\beta_{1},\dots,\beta_{n}$.
Then, since
 \[\g^{S}=\{\tilde X \in C^{\infty}(TG)\vert X\in T_{e}G,\; \; (\tilde X)_{g}=L^{G}_{g}\circ  \pi_{S}(g)^{-1} X\},
\]
we obtain that $\beta^{-1}_{1} X_{1},\dots , \beta^{-1}_{n} X_{n}$ is a basis of $\g^{S}\otimes \C=\cg^S_\C$.
Thus we have
 $$
 \frak g^S_{\C} = {\rm {span}} < \beta_1^{-1} X_1, \ldots, \beta_n^{-1} X_n>
 $$
 and 
 $$
 {\bigwedge}^*  (\frak g^S)^*_{\C} = {\bigwedge}^*  < \beta_1 x_1, \ldots, \beta_n x_n>,
 $$
 where $x_1, \ldots, x_n$ is the dual basis of $X_1, \ldots, X_n$.

 If $S = T$ we get
 $$
{\bigwedge}^* {\frak u}_{\C}^* = {\bigwedge}^* < \alpha_1 x_1, \ldots,  \alpha_n x_n>,
 $$
 where ${\frak u}$ is the Lie algebra of the nilshadow of $G$.
 
 Let $$A^*_{\Gamma} = A^* (G/\Gamma) \cap {\bigwedge}^* {\frak u}_{\C}^*,$$  where  $A^k (G/\Gamma)$ denotes the space of $k$-forms on $G/\Gamma$. Then
 $$
 A^*_{\Gamma} = {\rm {span}} \left\langle \alpha_{I} x_{I} {\Big \vert} \begin{array}{cc}I\subset \{1,\dots,n\},\\  (\alpha_{I})_{\vert_{\Gamma}}=1 \end{array}\right\rangle
 $$
  By  Theorem \ref{KKK},  the inclusion $A^*_{\Gamma}\subset A^* (G/\Gamma)$ induces  a cohomology isomorphism.
Hence we have:

 \begin{theorem}\label{rel_cohom}  Let $G/\Gamma$ be a solvmanifold.  If there exists a subtorus $S$ of $T$ such that $\pi_S (\Gamma) = 1$ and $A^*_{\Gamma}  \subset  \bigwedge  ({\frak g}^S)^*_{\C}$, then $H^*_{dR} (G /\Gamma) \cong H^* (\frak g^S).$
\end{theorem}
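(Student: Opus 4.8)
The plan is to combine Kasuya's result quoted just above --- that the inclusion $A^{*}_{\Gamma}\subset A^{*}(G/\Gamma)$ induces an isomorphism $H^{*}(A^{*}_{\Gamma})\cong H^{*}_{dR}(G/\Gamma;\C)$ --- with a weight-decomposition argument on the Chevalley--Eilenberg complex of $G^{S}$. The key point is to recognise $A^{*}_{\Gamma}$ as the weight-zero part of $\bigwedge^{*}(\g^{S})^{*}_{\C}$, which carries all of the Lie algebra cohomology. Note that by Proposition~\ref{prop-mod}(3) the hypothesis $\pi_{S}(\Gamma)=1$ already guarantees that $\Gamma$ is a lattice in $G^{S}$ and $G/\Gamma\cong G^{S}/\Gamma$, so the whole discussion takes place on a single manifold and on left-invariant data over it.

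First I would make the two hypotheses explicit in the basis. For a multi-index $I=\{i_{1}<\cdots<i_{p}\}$ write $\alpha_{I}=\prod_{i\in I}\alpha_{i}$, $\beta_{I}=\prod_{i\in I}\beta_{i}$ and $x_{I}=x_{i_{1}}\wedge\cdots\wedge x_{i_{p}}$, so that $A^{*}_{\Gamma}=\mathrm{span}\langle\alpha_{I}x_{I}\mid \alpha_{I}\vert_{\Gamma}=1\rangle$ and $\bigwedge^{*}(\g^{S})^{*}_{\C}=\bigwedge^{*}\langle\beta_{i}x_{i}\rangle=\mathrm{span}\langle\beta_{I}x_{I}\mid I\rangle$. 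Since $\pi_{S}=\diag(\beta_{1},\dots,\beta_{n})$, the assumption $\pi_{S}(\Gamma)=1$ says $\beta_{i}\vert_{\Gamma}=1$ for all $i$, hence $\beta_{I}\vert_{\Gamma}=1$. Comparing coefficients of the linearly independent forms $x_{I}$, a generator $\alpha_{I}x_{I}$ lies in $\bigwedge^{*}(\g^{S})^{*}_{\C}$ exactly when $\alpha_{I}=c\,\beta_{I}$ for a constant $c$, necessarily $c=1$ as both are characters; thus the inclusion hypothesis $A^{*}_{\Gamma}\subset\bigwedge^{*}(\g^{S})^{*}_{\C}$ is equivalent to the implication $\alpha_{I}\vert_{\Gamma}=1\Rightarrow\alpha_{I}=\beta_{I}$. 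Conversely $\alpha_{I}=\beta_{I}$ forces $\alpha_{I}\vert_{\Gamma}=\beta_{I}\vert_{\Gamma}=1$, so the two index sets coincide and, as a subspace of $\bigwedge^{*}(\g^{S})^{*}_{\C}$, $A^{*}_{\Gamma}=\mathrm{span}\langle\beta_{I}x_{I}\mid\alpha_{I}=\beta_{I}\rangle$.

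It then remains to show that this subspace carries all the cohomology. The torus $T^{S}=\mathcal{A}(\Ad^{G^{S}}_{s}(G^{S}))$ acts on $\g^{S}_{\C}$ by Lie-algebra automorphisms, hence on $\bigwedge^{*}(\g^{S})^{*}_{\C}$ commuting with the Chevalley--Eilenberg differential; this differential is the de Rham $d$ on left-invariant forms, so it matches the differential on $A^{*}_{\Gamma}$. In the coframe $\{\beta_{i}x_{i}\}$ the action is diagonal and the monomial $\beta_{I}x_{I}$ is an eigenvector of weight $\alpha_{I}\beta_{I}^{-1}$; hence the weight-zero subspace is exactly $\mathrm{span}\langle\beta_{I}x_{I}\mid\alpha_{I}=\beta_{I}\rangle=A^{*}_{\Gamma}$. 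Since $d$ preserves the weight decomposition, $H^{*}(\g^{S}_{\C})=\bigoplus_{\chi}H^{*}\big((\bigwedge^{*}(\g^{S})^{*}_{\C})_{\chi}\big)$, and because the connected group $T^{S}$ acts trivially on cohomology while acting by $\chi$ on the $\chi$-summand, every summand with $\chi\neq 0$ is acyclic. Therefore $H^{*}(\g^{S}_{\C})=H^{*}(A^{*}_{\Gamma})\cong H^{*}_{dR}(G/\Gamma;\C)$, and taking real structures gives $H^{*}_{dR}(G/\Gamma)\cong H^{*}(\g^{S})$.

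The step I expect to be the main obstacle is pinning down that $\beta_{I}x_{I}$ carries $\Ad^{G^{S}}_{s}$-weight $\alpha_{I}\beta_{I}^{-1}$ --- that is, that passing from $G$ to $G^{S}$ replaces the semisimple-adjoint characters $\alpha_{i}$ by $\alpha_{i}\beta_{i}^{-1}$ --- together with the verification that $T^{S}$ genuinely acts by automorphisms of $\g^{S}$ commuting with $d$. Once the weights are identified, the remainder is the standard fact that a connected automorphism group acts trivially on Lie-algebra cohomology, which concentrates $H^{*}(\g^{S}_{\C})$ in weight zero.
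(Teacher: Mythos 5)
Your strategy is sound and genuinely different from the paper's, which disposes of the theorem in one line: by \cite{Kas} the inclusion $A^*_{\Gamma}\subset A^*(G/\Gamma)$ is a quasi-isomorphism, and since $\pi_S(\Gamma)=1$ makes $\bigwedge^*(\g^S)^*_{\C}$ a space of invariant forms on $G^S/\Gamma=G/\Gamma$, one has the sandwich $A^*_{\Gamma}\subset \bigwedge^*(\g^S)^*_{\C}\subset A^*(G/\Gamma)$; the composite being an isomorphism on cohomology, together with the standard injectivity of invariant cohomology for a compact quotient of a unimodular group (averaging against Haar measure, the same Belgun-type argument the paper uses later in the proof of Theorem~\ref{dolbb}), forces both inclusions to be quasi-isomorphisms. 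You instead prove directly that $A^*_{\Gamma}\subset\bigwedge^*(\g^S)^*_{\C}$ is a quasi-isomorphism via a weight decomposition. Your identification of $A^*_{\Gamma}$ with ${\rm span}\langle \beta_I x_I \mid \alpha_I=\beta_I\rangle$ is correct (it is exactly the computation the paper performs after Remark~\ref{compare}), and the weight you flag as the main obstacle is indeed $\alpha_I\beta_I^{-1}$: this can be verified by computing the brackets of the frame $\beta_i^{-1}X_i$, and it is consistent with the nilshadow case $S=T$, where all weights become trivial.

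However, there is a genuine gap at the crucial acyclicity step. The ``standard fact'' you invoke --- that a connected group of automorphisms acts trivially on Lie-algebra cohomology --- is false. Take $\g^S=\R^2$ abelian and the connected torus $SO(2)\subset\Aut(\R^2)$: the Chevalley--Eilenberg differential vanishes, $H^1=(\R^2)^*$, and the rotation action on it is nontrivial, so the nonzero-weight summands are certainly not acyclic. Connectedness is not the relevant property. What is true, and what rescues your argument, is the following chain: (i) the inner automorphisms $\Ad_{G^S}(G^S)$ act trivially on $H^*(\g^S)$, because $G^S$ is connected and the Lie derivative $L_X=d\iota_X+\iota_X d$ is chain homotopic to zero; (ii) acting trivially on the finite-dimensional vector space $H^*(\g^S_{\C})$ is a Zariski-closed condition on $\Aut(\g^S_{\C})$, so the Zariski closure $\mathcal{A}(\Ad_{G^S}(G^S))$ also acts trivially; (iii) by the Lemma of Section 2 applied to $G^S$, your torus $T^S=\mathcal{A}(\Ad_s^{G^S}(G^S))$ is a maximal torus of $\mathcal{A}(\Ad_{G^S}(G^S))$, in particular contained in it. With (i)--(iii) substituted for ``connected groups act trivially,'' your argument closes correctly: each nonzero-weight summand carries simultaneously a nontrivial character action and a trivial action, hence is acyclic, and $H^*(\g^S_{\C})$ is concentrated in the weight-zero part $A^*_{\Gamma}$, which by \cite{Kas} computes $H^*_{dR}(G/\Gamma;\C)$.
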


\begin{rem}\label{compare}
Consider a finite index subgroup  $\tilde\Gamma\subset \Gamma$ such that $S^{\bot} ={\mathcal A}(\Ad_{s}(\tilde\Gamma))$ (and hence  ${\mathcal A}(\Ad (\tilde\Gamma))$)  is connected.
We take a sub-torus $S\subset T$ such that $T=S\times S^{\bot}$.
Then we have  $A^*_{\tilde\Gamma}  \subset    {\bigwedge}^*  (\g^S)^*_{\C}$.
Hence Theorem \ref{rel_cohom} is a generalization of Theorem \ref{Mod11}.
\end{rem}
%\begin{proof}

\smallskip

 Indeed, consider the projections $p_{S}:T\to S$ and $p_{S^{\bot}}:T\to S^{\bot}$. Observe that $\pi_{S}=p_{S}\circ \Ad_{s}$. Let   $D_{n}(\C)$ be the set of the  complex $n \times n$ diagonal matrices. 
For \[t_{i}\in {\rm {Char}} (D_{n}(\C))=\{t_{1}^{m_{1}}\cdots t_{n}^{m_{n}}\vert (t_{1},\dots, t_{n})\in D_{n}(\C)\},\]
we define $f_{i}=(t_{i})_{\vert T}$, $g_{i}= (t_{i})_{\vert S}\circ p_{S}$ and $h_{i}=(t_{i})_{\vert S^{\bot}} \circ p_{S^{\bot}}$.
We have $f_{i}\circ \Ad_{s}=\beta_{i}$, $g_{i}\circ \Ad_{s}=\beta_{i}$ and  $f_{i}=g_{i}h_{i}$.
Suppose $(\alpha_{I})\lfloor_{\Gamma}=1$ for some $I \subseteq \{1,\ldots,n\}$ and consider 
\[f_{I}=g_{I}h_{I}.
\]
Then, since $\Ad_{s}(\tilde\Gamma)\subset S^{\bot}$, we get $(g_{I})_{\vert \Ad_{s}(\tilde \Gamma)}=1$. Hence
\[1=\left(\alpha_{I}\right)\lfloor_{\tilde \Gamma}=(f_{I})_{\Ad_{s}(\tilde \Gamma})=(h_{I})_{\Ad_{s}(\tilde \Gamma}).
\]
Since $S^{\bot}$ is Zariski-closure of $\Ad_{s}(\tilde \Gamma)$, we obtain $h_{I}=1$ and thus $f_{I}=g_{I}$.
Hence if  $(\alpha_{I})\lfloor_{\Gamma}=1$ for  $I \subseteq \{1,\ldots,n\}$, we have 
\[\alpha_{I}=\beta_{I}.
\]
This implies  $A^*_{\tilde\Gamma}  \subset  \bigwedge  (\g^S)^*_{\C}$ and gives a proof of the above remark.
%\end{proof}

\begin{ex}\label{ex-mod}
Let $G=\R\ltimes_{\phi} \R^{2}$ such that
$\phi(t)=\left(\begin{array}{cc} \cos \pi t & -\sin \pi t \\ \sin \pi t &\cos \pi t\end{array}\right)$.
Then $G$ has two lattices $\Gamma_{1}=\Z\ltimes \Z^{2}$ and $\Gamma_{2}=2\Z\ltimes \Z^{2}$.
In the case of $\Gamma_{1}$,  we take $S$ trivial.
Then $A^*_{\Gamma}  \subset  \bigwedge  (\g^S)^*_{\C}$ and hence $H^{\ast}(G/\Gamma_{1})\cong H^{\ast}(\g^S)\cong H^{\ast}(\g)$.
In the case of $\Gamma_{2}$, we take $S=T=\Ad_{s} (G)$.
Then, $A^*_{\Gamma}  \subset  \bigwedge  (\g^S)^*_{\C}$ and hence $H^{\ast}(G/\Gamma_{2})\cong H^{\ast}(\g^S)\cong H^{\ast}(\R^{3})$.
\end{ex}

\begin{ex}\label{ex-mod-NEW}
Let $G=\R\ltimes_{\phi} \R^{4}$ such that
\[\phi(t)=\left(\begin{array}{cccc} \cos \pi t & -\sin \pi t & t\cos \pi t & -t\sin \pi t
\\ \sin \pi t &\cos \pi t&t\sin \pi t &t\cos \pi t\\
0&0&\cos \pi t & -\sin \pi t\\
0&0&\sin \pi t &\cos \pi t
\end{array}
\right).\]
Then we have the Jordan decomposition
\[\phi(t)=\left(\begin{array}{cccc} \cos \pi t & -\sin \pi t & 0 & 0
\\ \sin \pi t &\cos \pi t&0&0\\
0&0&\cos \pi t & -\sin \pi t\\
0&0&\sin \pi t &\cos \pi t
\end{array}
\right)\left(\begin{array}{cccc} 1 & 0 & t & 0
\\ 0&1&0&t\\
0&0&1 & 0\\
0&0&0 &1
\end{array}
\right).
\]
Hence the maximal torus  $\mathcal A (\Ad_s (G))$ is $S^{1}$ and the nilshadow $N_G$  of $G$ is $\R\ltimes_{\tilde\phi} \R^{4}$ where
\[\tilde\phi(t)=
\left(\begin{array}{cccc} 1 & 0 & t & 0
\\ 0&1&0&t\\
0&0&1 & 0\\
0&0&0 &1
\end{array}
\right).\]
Take $\Gamma=2\Z\ltimes \Z^{4}$.
Then $\Gamma$ is lattice in $G$ and $\Ad_{s}\vert_{\Gamma}$ is trivial.
In this case, the complement of $\mathcal A (\Ad_s (\Gamma))$ in $T$ is $S^{1}$.
Hence, for the $S^{1}$-modification $G^{S^{1}}$, we have an isomorphism
\[H^{\ast}(G/\Gamma)\cong H^{\ast}(\g^{S^1}),\]
where  $\g^{S^1}=span<X_{1},X_{2},X_{3},X_{4},X_{5}>$
with non-zero  Lie brackets
$$ [X_{1}, X_{4}]=X_{2},  \quad [X_{1},X_{5}]=X_{3}.
$$
By $T=S^{1}$, $G^{S^{1}}$ is the nilshadow  $N_G$ and so the cohomology $H^{\ast}(G/\Gamma)$ is computed by the Lie algebra of $N_G=\R\ltimes_{\tilde\phi} \R^{4}$.

\end{ex}

\subsection{Modification and invariant complex structures}
Let $J$ be a left-invariant complex structure on $G$ such that $J\circ (\Ad_{s})_g=(\Ad_{s})_g\circ J$ for every $g\in G$.
Since $T$ is the Zariski closure of $\Ad_{s}(G)$, $t$ and $J$ commute, for every $t\in T$.
Moreover, for any $\tilde X \in \g^{S}$ we have
\[J_{g}(\tilde X)_{g}=J_{g}\circ L^{G}_{g}\circ  \pi_{S}(g)^{-1} X= L^{G}_{g}\circ  \pi_{S}(g)^{-1}( J_{e} X)=L^{G^{S}}_{g}(J_{e}(\tilde X)_{e}). \]
As a consequence
\begin{prop} \label{cxstrmodif}
Let $J$ be a left-invariant complex structure on $G$ such that $J\circ (\Ad_{s})_g=(\Ad_{s})_g\circ J$ for every $g\in G$.
Then $J$ can be considered as a left-invariant complex structure on the $S$-modification $G^{S}=(G, \bullet_S)$.
Moreover for a lattice $\Gamma$ of $G$ such that the restriction $\pi_{S}\vert_{\Gamma}$ is trivial,  the complex solvmanifold $(G/\Gamma,J)$ is biholomorphic to $G^{S}/\Gamma$ endowed with the invariant complex structure induced by the left-invariant complex structure $J$ on $G^{S}$.
\end{prop}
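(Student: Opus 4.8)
The plan is to establish the two assertions separately, observing first that $G$ and $G^{S}$ share the same underlying smooth manifold and that $J$ is literally the same tensor field on both; once this is clear, most of the work is soft.

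For the claim that $J$ is a left-invariant complex structure on $G^{S}$, I would read off the identity displayed just before the statement: for every left-$G^{S}$-invariant vector field $\tilde X$ with $(\tilde X)_{e}=X$ it gives $J(\tilde X)=\widetilde{J_{e}X}$, the left-$G^{S}$-invariant field determined by $J_{e}X$. Hence $J$ preserves $\g^{S}$ and acts on it as $J_{e}$, so it is a left-$G^{S}$-invariant almost complex structure; here the only input is that $\pi_{S}(g)^{-1}$ lies in $T$ and therefore commutes with $J_{e}$, together with left-$G$-invariance $J_{g}\circ L^{G}_{g}=L^{G}_{g}\circ J_{e}$. It then remains to note integrability: the Nijenhuis tensor of $J$ depends only on the smooth manifold $G$ and on the tensor $J$, and neither changes when we pass to $G^{S}$. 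Since $J$ is integrable on $G$ by hypothesis, it is integrable on $G^{S}$, so it is a genuine left-invariant complex structure there.

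For the biholomorphism, suppose $\pi_{S}\vert_{\Gamma}$ is trivial. By Proposition~\ref{prop-mod}(3), $\Gamma$ is a lattice in $G^{S}$ and $G/\Gamma$ is diffeomorphic to $G^{S}/\Gamma$. The point I would pin down is that this diffeomorphism is induced by the identity of the common underlying manifold $G=G^{S}$: for $\gamma\in\Gamma$ we have $\pi_{S}(\gamma)=\id$, so translation by $\gamma$ in $G^{S}$, namely $g\mapsto\gamma\bullet_{S}g=\gamma\,\pi_{S}(\gamma)^{-1}(g)=\gamma g$, coincides with translation by $\gamma$ in $G$. Thus the two groups of deck transformations of the coverings $G\to G/\Gamma$ and $G^{S}\to G^{S}/\Gamma$ agree as transformation groups of $G=G^{S}$, and $\id_{G}$ intertwines them, descending to the diffeomorphism $F\colon G/\Gamma\to G^{S}/\Gamma$.

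Finally I would upgrade $F$ to a biholomorphism. Because $J$ is the same tensor on the common manifold, $\id_{G}\colon(G,J)\to(G^{S},J)$ is a biholomorphism carrying the left-invariant complex structure of $G$ to that of $G^{S}$; being equivariant for the coinciding $\Gamma$-actions, it descends to $F$, which is therefore holomorphic with holomorphic inverse, proving that $(G/\Gamma,J)$ and $(G^{S}/\Gamma,J)$ are biholomorphic. The only genuinely delicate step is the equivariance in the previous paragraph, i.e.\ checking that the lattice acts by the very same translations on both groups; this is precisely where the hypothesis $\pi_{S}\vert_{\Gamma}=\id$ enters, whereas integrability and holomorphicity transfer for free since the complex manifold $(G,J)=(G^{S},J)$ is left unchanged.
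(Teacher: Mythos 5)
Your proof is correct and follows essentially the same route as the paper: the crucial step is the identity $J_{g}(\tilde X)_{g}=L^{G^{S}}_{g}(J_{e}(\tilde X)_{e})$, resting on the fact that $J$ commutes with every element of $T$ (hence with $\pi_{S}(g)^{-1}$) because the commutant of $J$ is Zariski closed and contains $\Ad_{s}(G)$ — this is exactly the computation the paper displays and from which it deduces the proposition ``as a consequence.'' Your extra details — integrability for free since the Nijenhuis tensor of the unchanged tensor $J$ on the unchanged manifold is unchanged, and the observation that $\gamma\bullet_{S}g=\gamma\,\pi_{S}(\gamma)^{-1}(g)=\gamma g$ for $\gamma\in\Gamma$ so that the identity map intertwines the two $\Gamma$-actions and descends to a biholomorphism of the quotients — are precisely the implicit content of that deduction, correctly spelled out.
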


We can apply Proposition~\ref{cxstrmodif} to the following two examples.

\begin{ex} $(${\bf Nakamura manifold}$)$ \label{naka}  Consider the  simply connected complex solvable  Lie group $G$
 defined by 
$$
G = \left \{ \left(  \begin{array}{cccc} e^z&0&0&w_1\\ 0&e^{-z}&0&w_2\\ 0&0&1&z\\ 0&0&0&1 \end{array} \right) , \,  w_1, w_2, z \in \C \right \}.
$$
The Lie group $G$ is  the semi-direct product $\C  \ltimes_{\varphi} \C^2$, where 
$$
\varphi (z) = \left(  \begin{array}{cc} e^z&0\\ 0&e^{-z} \end{array} \right) \, , \qquad \forall z \in \C\, .
$$
A basis of  complex left-invariant $1$-forms is given by
$$
\phi_1 = dz, \, \phi_2 = e^{- z} d w_1, \, \phi_3 = e^z d w_2
$$
and in terms of the  real basis of left-invariant $1$-forms $(e^1, \ldots, e^6)$ defined by
$$
\phi_1 = e^1 + \sqrt{-1}  e^2, \, \phi_2 = e^3 + \sqrt{-1} e^4, \, \phi_3 = e^5 + \sqrt{-1}  e^6,
$$
the  structure equations are:
$$
\left \{ \begin{array} {l}
d e^j = 0, \quad  j = 1,2,\\[4 pt]
d e^3 = - e^{13} + e^{24},\\[4 pt]
d e^4 = - e^{14} - e^{23},\\[4 pt]
d e^5 = e ^{15} - e^{26},\\[4 pt]
d e^6 = e^{16} + e^{25},
\end{array} \right .
$$
where  we denote by $e^{ij}$ the wedge product  $e^i \wedge e^j$.

Let $B \in SL(2, \Z)$ be a unimodular matrix with distinct real eigenvalues: $\lambda, \frac {1} {\lambda}$.
If  $t_0 = \log \lambda$, i.e. $e^{t_0} = \lambda$,
then there exists a matrix $P \in GL(2, \R)$ such 
that $$
P B P^{-1} =  {\mbox{diag}  (\lambda, \lambda^{-1})}.
$$
Let 
$$
\begin{array}{l}
L_{1, 2 \pi} = \Z [t_0, 2 \pi i] = \{ t_0 k + 2 \pi h i  \,  \, \vert \, \,   h, k \in \Z \},\\ [4pt]
L_2 = \left\{  P \left(  \begin{array}{c}  \mu \\ \alpha \end{array} 
 \right) \, \,  \vert \, \,   \mu,  \alpha \in \Z[i] 
\right \}.
\end{array}
$$
 Then, by \cite{Yamada}  $\Gamma = L_{1, 2 \pi} \ltimes_{\varphi} L_2$  is a lattice of $G$.
 
 Since $G$ has trivial center, we have that $\Ad_G (G) \cong G$ and thus $\Ad_G (G)$ is a semidirect product $\R^2 \ltimes \R^4$. Moreover,
for  the Zariski closures of $\Ad_G (G)$ and $\Ad_G (\Gamma)$ we obtain $$
 \begin{array}{l}
 {\mathcal A} (\Ad_G (G)) = (\R^{\#}  \times S^1) \ltimes  \R^4,\\
 {\mathcal A} (\Ad_G (\Gamma)) = \R^{\#}  \ltimes  \R^4,\\
\end{array} 
$$
where  the split torus $\R^{\#}$ corresponds to the action of $e^{\frac{1}{2} (z + \overline z)}$ and the compact torus $S^1$ to the one of $e^{\frac{1}{2} (z - \overline z)}$.

Therefore in this case ${\mathcal A} (\Ad_G (G))  = S^1  {\mathcal A} (\Ad_G (\Gamma))$ and ${\mathcal A} (\Ad_G (\Gamma))$ is connected.
By applying Theorem~\ref{Mod11} there exists a simply connected normal subgroup $\tilde G = \Delta (G)$ of  $S^1 \ltimes G$. The new Lie group $\tilde G$ is obtained  by killing the action of $e^{\frac{1}{2} (z - \overline z)}$. Indeed, 
we get  that 
$$
\tilde G  \cong  \left \{ \left(  \begin{array}{cccc} e^{\frac{1}{2} (z + \overline z)}&0&0&w_1\\ 0&e^{- \frac{1}{2} (z + \overline z)}&0&w_2\\ 0&0&1&z\\ 0&0&0&1 \end{array} \right), w_1, w_2, z \in \C \right \}.
$$
Note that $\tilde G$ is the modification $G^S$ of $G$,  where the algebraic subtorus   $S$  of $T  = {\mathcal A} (\Ad_s  (G)) \cong \R^{\#} \times S^1$  corresponds  to the action of $e^{\frac{1}{2} (z - \overline z)}$.

The diffeomorphism between $G/\Gamma$ and $\tilde G/\Gamma$ was already shown in \cite{Yamada}.  Then in this case  one has the isomorphism $H^*_{dR} (G/ \Gamma)  \cong H^* (\tilde {\mathfrak g})$, where $\tilde {\mathfrak g}$ denotes the Lie algebra of $\tilde G$ and the de Rham cohomology of the Nakamura manifold  $G/\Gamma$ is not isomorphic to $H^* ({\mathfrak g})$ (see also \cite{debaT}).
The Nakamura manifold $G/\Gamma$ is a complex parallelizable manifold and it is endowed with the bi-invariant complex structure $J$ with $(1,0)$-forms $(\phi_1, \phi_2, \phi_3)$.  In particular we have
$
J \circ (\Ad_{s})_g = (\Ad_{s})_g \circ J,$ for every $g \in G$. Applying Proposition \ref{cxstrmodif} $J$ can be viewed as a left-invariant complex structure on the $S$-modification $G^S =\tilde G$ and since the restriction $\pi_S \vert_{\Gamma}$ is trivial, the Nakamura manifold $(G/\Gamma, J)$ is biholomorphic to $(G^S/\Gamma, J)$. This property was already proved in \cite{Yamada}.

\end{ex}

\begin{ex} $(${\bf Secondary Kodaira surface}$)$ \label{naka}  Consider the   semi-direct product $G = \R \ltimes_{\rho} H_3 (\R)$, where 
$$
\rho (t) = \left(  \begin{array}{ccc} \cos t&-\sin t & 0\\ \sin t &\cos t & 0\\ 0 & 0& 1 \end{array} \right) \, , \qquad \forall t  \in \R, \, 
$$
and $H_3 (\R)$ is  the $3$-dimensional  real Heisenberg group  which is considered as $\R^{3}$ with the multiplication
$$(x,y,z)(x^{\prime},y^{\prime},z^{\prime})=(x+x^{\prime}, y+y^{\prime},z+z^{\prime}+\frac{1}{2}(xy^{\prime}-x^{\prime}y)).$$
$G$ admits a  basis of   left-invariant $1$-forms  $(e^1, \ldots, e^4)$ such that
$$
\left \{ \begin{array} {l}
d e^1= e^{24},\\[4pt]
d e^2= - e^{14} ,\\[4 pt]
d e^3 = - e^{12},\\[4 pt]
d e^4 = 0.
 \end{array}  \right.
$$
By \cite{COS} the Lie group $G$ admits lattices of the form $\Lambda_{k, l} = l \Z \ltimes \Gamma_k$, where $k \in \mathbb N$, $l = \pi,  \frac{\pi}{2}$ and  $\Gamma_k= \Z \times \Z \times \frac{1}{2k} \Z$. The  left-invariant complex structure on $G$  defined by 
$$
J e_1 = e_2, \quad J e_3 = e_4
$$
satisfies  the condition $J\circ (\Ad_{s})_g=(\Ad_{s})_g\circ J$, for every $g\in G$. Therefore, by applying Proposition \ref{cxstrmodif},  we have that $J$ can be viewed as a left-invariant complex structure on the $S$-modification $G^S \cong \R\times H_{3}(\R)$, where  $S = S^1$  is generated by $e_4$.
\end{ex}

\section{Modifications and   Holomorphic Mostow fibrations}\label{hol_Mostow}
Let $G$ be  a simply connected  solvable Lie group with a lattice $\Gamma$ and $\g$ be the Lie algebra of $G$.
Let $N$ be the nilradical of $G$.
It is known that $\Gamma\cap N$ is a lattice of $N$ and $\Gamma/\Gamma\cap N$ is a lattice of the abelian Lie group $G/N$ (see \cite{Raghunathan}).
The solvmanifold $G/\Gamma$ is a fiber bundle
\[\xymatrix{
N/\Gamma\cap N=N\Gamma/\Gamma\ar[r]& G/\Gamma\ar[r]  &G/N\Gamma=(G/N)/(\Gamma/\Gamma\cap N)
}
\]
 over a torus with a  nilmanifold $N/\Gamma\cap N$ as fiber. Here the identification $N/\Gamma\cap N=N\Gamma/\Gamma$ is given by the correspondence
\begin{equation}\label{corrMostow} 
n^{\prime} \Gamma  \cap N  \in N/\Gamma\cap N    \mapsto n^{\prime} \Gamma\in  N\Gamma/\Gamma. 
\end{equation}
This fiber bundle is called  the \emph{Mostow bundle} of $G/\Gamma$.
Its structure group   is $N\Gamma/\Gamma_{0}$,  where $\Gamma_{0}$ is the largest normal subgroup of $\Gamma$ which is normal in $N\Gamma$,  and the action is by  left translations (see \cite{St}).

For a maximal torus $T$ of ${\mathcal A}(\Ad(G))$, consider the group  $T\ltimes G$.
Take the centralizer $C=C_{T}(G)$.
Then $C$ is a simply connected nilpotent subgroup of $G$ such that $G=C\cdot N$ (see \cite{Dekimpe}).
Note that $N\Gamma=N\cdot (\Gamma\cap C)$.

Suppose that $N$ admits a left-invariant complex structure $J_{N}$.
For $nc\in N\cdot (\Gamma\cap C)$, using the correspondence \eqref{corrMostow}, we have
\[nc\cdot ( n^{\prime} \Gamma\cap N )=ncn^{\prime}c^{-1}  \Gamma\cap N.\]
Hence the structure group  $N\Gamma/\Gamma_{0}$ consists of holomorphic transformations if and only if for any $c\in\Gamma\cap C$ we have $\Ad_{c}\circ J_{N}= J_{N}\circ \Ad_{c}$ on $\frak n$.

%We have:
\begin{prop}\label{holmos}
Suppose that for any $c\in\Gamma\cap C$ we have $\Ad_{c}\circ J_{N}= J_{N}\circ \Ad_{c}$ on $\frak n$.
Then there exist a finite index subgroup $\tilde\Gamma \subset \Gamma$, an  $S$-modification $G^{S}$ containing $\tilde\Gamma$, where $S$ is a complement of a maximal torus  of $\mathcal A (\Ad (\tilde \Gamma))$,  and a simply connected nilpotent subgroup $C^{\prime} \subset G^{S}$  such that $G^{S}=C^{\prime}\cdot N$ and  for any $c^{\prime}\in C^{\prime}$ $\Ad_{c^{\prime}}\circ J_{N}= J_{N}\circ \Ad_{c^{\prime}}$ on $\frak n$.
\end{prop}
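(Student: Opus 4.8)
The plan is to reduce to a finite-index sublattice, carry out the $S$-modification of Section~\ref{modif} with $S$ complementing the torus of $\mathcal A(\Ad(\tilde\Gamma))$, and then upgrade the commutation hypothesis from the discrete set $\Gamma\cap C$ to the whole modified centralizer by a Zariski-closure argument.

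First I would apply \cite[Theorem 6.11]{Raghunathan} to replace $\Gamma$ by a finite-index subgroup $\tilde\Gamma$ for which $S^{\perp}:=\mathcal A(\Ad_{s}(\tilde\Gamma))$, the maximal torus of $\mathcal A(\Ad(\tilde\Gamma))$, is connected, and choose $S\subset T$ with $T=S\times S^{\perp}$. Since $\pi_{S}=p_{S}\circ\Ad_{s}$ and $\Ad_{s}(\tilde\Gamma)\subset S^{\perp}$, the restriction $\pi_{S}\vert_{\tilde\Gamma}$ is trivial, so Proposition~\ref{prop-mod}(3),(4) guarantees that $\tilde\Gamma$ is a lattice in $G^{S}$ and that $G^{S}$ and $\tilde\Gamma$ satisfy the Mostow condition; moreover $\frak n$ remains an ideal of $\g^{S}$, so $\Ad^{G^{S}}$ acts on it.

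The heart of the argument is to show that the algebraic group $\mathcal A(\Ad(\tilde\Gamma\cap C))$ lies in the centralizer of $J_{N}$ in $GL(\frak n)$. For $c\in\tilde\Gamma\cap C\subset\Gamma\cap C$ the hypothesis gives $\Ad_{c}\circ J_{N}=J_{N}\circ\Ad_{c}$; as this centralizer is Zariski closed it contains the Jordan components $\Ad_{s}(c)$, $\Ad_{u}(c)$ and, taking the Zariski closure, the whole of $\mathcal A(\Ad(\tilde\Gamma\cap C))$. Using $N\tilde\Gamma=N\cdot(\tilde\Gamma\cap C)$ and $\Ad_{s}\vert_{N}=\id$ one sees that $\Ad_{s}(\tilde\Gamma)=\Ad_{s}(\tilde\Gamma\cap C)$, so the maximal torus of $\mathcal A(\Ad(\tilde\Gamma\cap C))$ is exactly $S^{\perp}$; in particular both $S^{\perp}$ and the unipotent radical of $\mathcal A(\Ad(\tilde\Gamma\cap C))$ commute with $J_{N}$. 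I would then take $C'=C_{S^{\perp}}(G^{S})$, the nilpotent subgroup given by the construction of \cite{Dekimpe} applied to $G^{S}$ and its maximal torus $S^{\perp}$, so that $G^{S}=C'\cdot N$. For $c'\in C'$ the semisimple part of $\Ad^{G^{S}}_{c'}\vert_{\frak n}$ lies in $S^{\perp}$ while its unipotent part lies in the (unchanged) unipotent radical; hence $\Ad^{G^{S}}_{c'}\vert_{\frak n}\in\mathcal A(\Ad(\tilde\Gamma\cap C))$ and therefore commutes with $J_{N}$, which is the assertion.

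The step I expect to be the main obstacle is this last identification of Zariski closures. One must verify that killing the subtorus $S$ does not disturb the unipotent radical of the adjoint image, and that the Mostow condition for $(G^{S},\tilde\Gamma)$ genuinely allows one to replace the lattice points by the full group $C'$ when passing to Zariski closures -- that is, that the commutation with $J_{N}$, known a priori only on the discrete set $\Gamma\cap C$, propagates to all of $C'$ after modification. This is exactly the point where Proposition~\ref{prop-mod}(4) and the Zariski density of lattices in the closures of their adjoint images enter.
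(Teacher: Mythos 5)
Your overall architecture matches the paper's: pass to a finite-index $\tilde\Gamma$ with connected algebraic closure, propagate the commutation with $J_N$ from $\tilde\Gamma\cap C$ to the algebraic group $\mathcal{A}(\Ad(\tilde\Gamma\cap C))$ (using that the centralizer of $J_N$ in $GL(\frak n)$ is Zariski closed), and then try to place the modified complement $C'$ inside this closure. But there is a genuine gap at exactly the step you yourself flag as ``the main obstacle''. Your final claim is that for $c'\in C'$ the semisimple part of $\Ad_{c'}\vert_{\frak n}$ lies in $S^{\perp}$ and the unipotent part lies in ``the (unchanged) unipotent radical'', whence $\Ad_{c'}\vert_{\frak n}\in\mathcal{A}(\Ad(\tilde\Gamma\cap C))$. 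For this conclusion, ``unchanged unipotent radical'' must mean the unipotent radical $U'$ of $\mathcal{A}(\Ad(\tilde\Gamma\cap C))$ itself, and nothing you have established puts the unipotent part of $\Ad_{c'}$ in that group rather than in some larger unipotent group. Neither tool you invoke can fill this. Proposition~\ref{prop-mod}(4) (the Mostow condition for $(G^{S},\tilde\Gamma)$) identifies $\mathcal{A}(\Ad_{G^{S}}(G^{S}))$ with $\mathcal{A}(\Ad_{G^{S}}(\tilde\Gamma))$, but commutation with $J_N$ is known only on the much smaller group $\mathcal{A}(\Ad(\tilde\Gamma\cap C))$: it fails in general on the closure of the full lattice's adjoint image, since for instance $\Ad_{n}$ with $n\in\Gamma\cap N$ has no reason to commute with a merely left-invariant $J_N$. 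So the Mostow condition cannot substitute for the missing inclusion, and ``Zariski density of lattices'' by itself does not tell you where the unipotent parts of elements of $C'$ land.

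What the paper does at this point is precisely the verification you postponed. It realizes $T\ltimes G=T\ltimes U_{G}$ (with $U_{G}$ the nilshadow) as a real algebraic group in which $N$ is a unipotent algebraic subgroup, and uses two facts: first, $C=C_{T\ltimes U_{G}}(T)\cap G$ (Dekimpe), so that $\Gamma\cap C=C_{T\ltimes U_{G}}(T)\cap\Gamma$; second, the Zariski closures of $\Gamma$ and $G$ in $T\ltimes U_{G}$ are $S^{\perp}\ltimes U_{G}$ and $T\ltimes U_{G}$, i.e.\ the closure of the lattice already contains the \emph{entire} unipotent part $U_{G}$. From these it deduces that $\mathcal{A}(C)=T\ltimes U'$ and $\mathcal{A}(\Gamma\cap C)=S^{\perp}\ltimes U'$ share the same unipotent radical $U'=U_{G}\cap C_{T\ltimes U_{G}}(T)$ --- exactly the ``unchanged unipotent radical'' statement your argument needs. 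Moreover, the paper takes $C'$ not abstractly as the Dekimpe complement of $G^{S}$ but explicitly as $C'=\{\pi_{S}(c)^{-1}\cdot c\ \vert\ c\in C\}$, so that the inclusion $C'\subset S^{\perp}\ltimes U'=\mathcal{A}(\Gamma\cap C)$, and hence the commutation with $J_N$, is immediate. To repair your write-up, prove the shared-unipotent-radical statement via this algebraic-group embedding (or, equivalently, work with the explicit description of $C'$ and verify $C'\subset\mathcal{A}(\Gamma\cap C)$ directly); as it stands, the decisive step of your proof is asserted rather than proved.
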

\begin{proof}
Suppose that for any $c\in\Gamma\cap C$ we have $\Ad_{c}\circ J_{N}= J_{N}\circ \Ad_{c}$ on $\frak n$.
We can assume that ${\mathcal A}(\Ad(\Gamma))$ is connected, otherwise we can pass to a  finite index subgroup of $\Gamma$.
Let $T$ be a maximal torus of ${\mathcal A}(\Ad(G))$ and $S^{\bot}$ a maximal torus of ${\mathcal A}(\Ad(\Gamma))$.
Consider the semi-direct product $T\ltimes G$.
Then this group is a real algebraic group as $T\ltimes G=T\ltimes U_{G}$, where $U_G$ is the nilshadow of $G$.
Therefore, the  nilradical $N$ of $G$ is a unipotent algebraic subgroup of $U_{G}$.
Take the Zariski-closures ${\mathcal A}( C)$ and ${\mathcal A}(\Gamma\cap C)$ in $T\ltimes U_{G}$.
Since for any $c\in \Gamma\cap C$ we have $\Ad_{c}\circ J_N=J_N \circ \Ad_{c}$ on $\frak n$ and the action of  $T\ltimes U_{G}$ on $\frak n$ is algebraic, for any $c^{\prime}\in {\mathcal A}(\Gamma\cap C)$ we have $\Ad_{c^{\prime}}\circ J_N =J_N \circ \Ad_{c^{\prime}}$.
For the splitting $T\ltimes G=T\ltimes U_{G}$,
we have $C=C_{T\ltimes U_{G}}(T)\cap G$ (see \cite{Dekimpe}).
Hence $C\cap\Gamma=C_{T\ltimes U_{G}}(T)\cap \Gamma$.
Since the Zariski closures of $\Gamma$ and $G$ in $T\ltimes U_{G}$ are $S^{\bot}\ltimes U_{G}$ and $T\ltimes U_{G}$ respectively, ${\mathcal A}( C)$ and ${\mathcal A}(\Gamma\cap C)$ have the same unipotent radical $U^{\prime}=U_{G}\cap C_{T\ltimes U_{G}}(T)$ and we get
${\mathcal A}( C)=T\ltimes U^{\prime}$ and ${\mathcal A}(\Gamma\cap C)=S^{\bot}\ltimes U^{\prime}$.
Consider the modification $G^{S}$.
Then $G^{S}=C^{\prime}\cdot N$ with $C^{\prime}=\{\pi_{S}(c)^{-1}\cdot c\vert c\in C\}$.
Since  $C^{\prime}\subset S^{\bot}\ltimes U^{\prime}={\mathcal A}(\Gamma\cap C)$, for every $c^{\prime}\in C^{\prime}$ we obtain $\Ad_{c^{\prime}}\circ J_N =J_N \circ \Ad_{c^{\prime}}$.
\end{proof}

Suppose $G$ has a left-invariant complex structure $J$.
Then in the above settings, we consider the following condition.
\begin{condition}\label{condd}
 $J(\frak n)=\frak n$ and  $\Ad_{c}\circ J= J\circ \Ad_{c}$ on $\frak n$, for any $c\in C$.
\end{condition}
Note that if the above condition is satisfied, then  the Mostow fibration of $G/\Gamma$ is holomorphic.
If $(G, J)$ is a complex Lie group, then we can take $N$ and $C$ as complex subgroups and hence the condition \ref{condd} holds.

We can state the following 

\begin{cor}\label{ficc}
Let $G/\Gamma$ be a solvmanifold. 
Suppose that $G$ has a  left-invariant complex structure $J$ such that $J(\frak n)=\frak n$.
Then the following two conditions are equivalent.

$(1)$  $\Ad_{c}\circ J_{N} (X)= J_{N}\circ \Ad_{c} (X)$ on $\mathfrak n$,   for every $c\in C$.

$(2)$ $(\Ad_{s})_g\circ J_{N}=J_{N} \circ (\Ad_{s})_g$, for every $g\in G$,  and the Mostow fibration of $G/\Gamma$ is holomorphic.
\end{cor}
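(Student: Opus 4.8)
The plan is to establish the two implications separately, using the multiplicative Jordan decomposition $\Ad_{c}=\Ad_{sc}\,\Ad_{uc}$ of $\Ad_{c}$ restricted to the ideal $\frak n$, together with the structural description of the Zariski closures of $C$ and $\Gamma\cap C$ produced in the proof of Proposition~\ref{holmos}: namely ${\mathcal A}(C)=T\ltimes U^{\prime}$ and ${\mathcal A}(\Gamma\cap C)=S^{\bot}\ltimes U^{\prime}$ with one and the same unipotent radical $U^{\prime}=U_{G}\cap C_{T\ltimes U_{G}}(T)$.

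For $(1)\Rightarrow(2)$ I would argue directly. Since $\Gamma\cap C\subset C$, hypothesis $(1)$ in particular gives $\Ad_{c}\circ J_{N}=J_{N}\circ\Ad_{c}$ for every $c\in\Gamma\cap C$, which by the holomorphicity criterion obtained before the statement means exactly that the Mostow fibration of $G/\Gamma$ is holomorphic. To get the $\Ad_{s}$-commutation, I would write an arbitrary $g\in G$ as $g=cn$ with $c\in C$ and $n\in N$, which is possible since $G=C\cdot N$. Because $\ad_{s}$ vanishes on $\frak n$, the homomorphism $\Ad_{s}$ is trivial on $N$, so $\Ad_{sg}=\Ad_{sc}$. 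As $\Ad_{sc}$ is the semisimple part of $\Ad_{c}$ on the $\Ad_{c}$-invariant ideal $\frak n$, it is a real polynomial in $\Ad_{c}$; hence $(1)$ forces $\Ad_{sc}\circ J_{N}=J_{N}\circ\Ad_{sc}$, i.e. $\Ad_{sg}\circ J_{N}=J_{N}\circ\Ad_{sg}$ for all $g\in G$.

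For $(2)\Rightarrow(1)$ the idea is to propagate the commutation from the lattice part $\Gamma\cap C$ to the whole of $C$ by Zariski density, in the spirit of Proposition~\ref{holmos}. First, since $T$ is the Zariski closure of $\Ad_{s}(G)$ and the centralizer of $J_{N}$ in $\Aut(\frak n)$ is Zariski closed, the hypothesis $\Ad_{sg}\circ J_{N}=J_{N}\circ\Ad_{sg}$ for all $g$ implies that every element of $T$ commutes with $J_{N}$. Second, holomorphicity of the Mostow fibration gives $\Ad_{c}\circ J_{N}=J_{N}\circ\Ad_{c}$ for all $c\in\Gamma\cap C$; since the action of $T\ltimes U_{G}$ on $\frak n$ is algebraic, the whole Zariski closure ${\mathcal A}(\Gamma\cap C)=S^{\bot}\ltimes U^{\prime}$ then centralizes $J_{N}$, and in particular so does $U^{\prime}$. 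Combining the two facts, the group generated by $T$ and $U^{\prime}$, which is ${\mathcal A}(C)=T\ltimes U^{\prime}$, centralizes $J_{N}$; since $C\subset{\mathcal A}(C)$, this gives $(1)$. If ${\mathcal A}(\Ad(\Gamma))$ fails to be connected I would first replace $\Gamma$ by a finite index subgroup as in Proposition~\ref{holmos}; this changes neither $(1)$, which does not involve $\Gamma$, nor the hypothesis, since holomorphicity of the Mostow fibration is inherited by finite index subgroups.

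I expect the only genuine obstacle to be the propagation step in $(2)\Rightarrow(1)$, where one must know that the unipotent radical $U^{\prime}$ of ${\mathcal A}(C)$ coincides with that of ${\mathcal A}(\Gamma\cap C)$, so that the commutation verified merely on $\Gamma\cap C$ already fills out all unipotent directions of $C$. This is exactly the content of the identities $C=C_{T\ltimes U_{G}}(T)\cap G$ and $C\cap\Gamma=C_{T\ltimes U_{G}}(T)\cap\Gamma$ established in the proof of Proposition~\ref{holmos}, on which I would rely directly.
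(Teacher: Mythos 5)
Your proof is correct and follows essentially the same route as the paper: $(1)\Rightarrow(2)$ via the identification $\Ad_{s(cn)}=(\Ad_{c})_{s}$ (the semisimple part, hence commuting with everything that commutes with $\Ad_{c}$) together with the holomorphicity criterion on $\Gamma\cap C$, and $(2)\Rightarrow(1)$ by Zariski-density propagation from $\Gamma\cap C$ combined with the fact that $T$ centralizes $J_{N}$, exactly the two ingredients the paper uses. The only cosmetic difference is that the paper factors $\Ad_{c}=\pi_{S}(c)\circ\bigl(\pi_{S}(c)^{-1}\circ\Ad_{c}\bigr)$ through the modification map of Proposition~\ref{holmos}, whereas you decompose ${\mathcal A}(C)=T\ltimes U^{\prime}$ and observe that both factors centralize $J_{N}$ --- the same argument in a slightly different packaging.
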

\begin{proof}
Since $\Ad_{s}$ is identified with the map $G=C\cdot N\ni cn\mapsto (\Ad_{c})_{s}\in \Aut(\g)$, it  follows that 
 $(1)\Rightarrow (2)$.

Suppose that the condition $(2)$ holds. 
Since $T= {\mathcal A}(\Ad_{S}(G))$, using the splitting $T\ltimes G=T\ltimes U$,
we have $C=C_{T\ltimes U}(T)\cap G$.
Like in the proof of Proposition \ref{holmos}, we have 
\[\pi_{S}(c)^{-1}\circ \Ad_{c}\circ  J_{N}= J_{N}\circ \pi_{S}(c)^{-1}\circ \Ad_{c}.
\]
From the conditions $(\Ad_{s})_g\circ J=J \circ (\Ad_{s})_g$ and $\pi_{S}(c) \in T$, we get that  $\pi_{S}(c)  \circ J_{N}= J_{N}  \circ \pi_{S}(c)$.
Hence we obtain
\[\Ad_{c}\circ  J_{N}=\pi_{S}(c) \circ \pi_{S}(c)^{-1}\circ \Ad_{c}\circ J_{N}= J_{N}  \circ \pi_{S}(c) \circ \pi_{S}(c)^{-1}\circ \Ad_{c}= J_{N}  \circ \Ad_{c}\, ,
\]
and therefore also the assertion $(2)\Rightarrow (1)$ follows.
\end{proof}

Using Proposition~\ref{holmos} it is possible to obtain a left-invariant complex structure on a modification of $G$ starting with (a not necessarily integrable) almost complex structure.
We consider the sum $\g=\frak c+\frak n$ associated with the product $G=C\cdot N,$ where $\frak c$ is the Lie algebra of the subgroup $C\subset G$ as above.
We suppose that $G$ admits a left-invariant almost complex structure $J$ such that $J(\frak c)\subset \frak c$, $J(\frak n)\subset \frak n$ and $J$ is integrable on $\frak c$ and $\frak n$, i.e.,
$N_{J}(X,Y)=0$ for any $X,Y\in \frak c$ and  for any $X,Y \in \frak n,$ where  $N_{J}$ is the Nijenhuis tensor for $J$.
In this case, since it is possible that $N_{J}(X,Y)\not=0$ for $X\in \frak c$ and $Y\in \frak n$,
$J$ may not   be integrable on $G$.
By Proposition~\ref{holmos}, we obtain the following result.
\begin{cor}
Let $G$ be a simply connected solvable Lie group with nilradical $N$ and $J$  a left-invariant almost complex  structure $J$ such  that $J(\frak c)\subset \frak c$, $J(\frak n)\subset \frak n$ and $J$ is integrable on $\frak c$ and $\frak n$.
Suppose that $G$ admits a lattice $\Gamma$ such that,   for any $c\in\Gamma\cap C$,  we have $\Ad_{c}\circ J_{N}= J_{N}\circ \Ad_{c}$ on $\frak n$. 
Then there exist a finite index subgroup $\tilde\Gamma\subset \Gamma$ and a $S$-modification $G^{S}$ containing $\tilde\Gamma$ such that $G^{S}$ admits a left-invariant  complex structure $\tilde J$. 
\end{cor}
\begin{proof}
We take a $S$--modification $G^{S}$ as in  Proposition~\ref{holmos}.
Then we have the sum $\g^{S}=\frak c^{\prime} + \frak n,$ where $\frak c^{\prime}$ is the Lie algebra of $C^{\prime}$.
By the construction of $C^{\prime}$ (see the proof of Proposition~\ref{holmos}),  we have an isomorphism $\frak c^{\prime}\cong \frak c$.
Thus, corresponding to the almost complex structure $J$, we can define the left-invariant complex structure   $\tilde J$ on $G^{S}$ such that 
 $\tilde J(\frak c^{\prime})\subset \frak c^{\prime}$, $\tilde J(\frak n)\subset \frak n$  and
$N_{\tilde J}(X,Y)=0$ for any $X,Y\in \frak c^{\prime}$ and   for any $X,Y \in \frak n$.
As in Proposition~\ref{holmos},  for  every $c^{\prime}\in C^{\prime}$ we have $\Ad_{c^{\prime}}\circ \tilde J_N =\tilde J_N \circ \Ad_{c^{\prime}}$ and hence
for any $X\in \frak c^{\prime}$ and $Y\in \frak n$ we have $[X,\tilde JY]=\tilde J[X,Y]$.
This implies that   $N_{\tilde J}(X,Y)=0$ for  any $X\in \frak c^{\prime}$ and $Y\in \frak n$ and hence $\tilde J$ is integrable.
\end{proof}
This corollary and its proof give the following useful result.

\begin{cor}\label{C-mod}
Let $G$ be a simply connected solvable Lie group with nilradical $N$.  
Suppose that
\begin{enumerate}
\item $G=\C^{n}\ltimes_{\phi} N$ and $G$ admits a lattice $\Gamma=\Gamma_{\C^{n}}\ltimes \Gamma_{N}$;
\item $N$ has a left-invariant complex structure $J_{N}$ such that
% the Mostow fibration \[N/\Gamma_{N}\to G/\Gamma\to \C^{n}/\Gamma_{\C^{n}}\]is holomorphic.\footnote{However we do not assume that the almost complex structure  $J_{\C^{n}}\oplus J_{N}$ on $\g=\C^{n}\ltimes_{\phi} \frak n$ is integrable.}
  for any $t \in \Gamma_{\C^{n}}$ we have $\phi(t)\circ J_{N}=J_{N}\circ \phi(t)$. 
\end{enumerate}
Then there exists a finite index subgroup $\tilde\Gamma\subset \Gamma$ and a $S$-modification $G^{S}=\C^{n}\ltimes_{\tilde\phi} N$ containing $\tilde\Gamma$ such that, for any $t\in \C^{n}$, we have
$\tilde\phi(t)\circ J_{N}=J_{N}\circ \tilde\phi(t)$. 
In particular the almost complex structure  $J_{\C^{n}}\oplus J_{N}$ on $\g^{S}=\C^{n}\ltimes_{\tilde\phi} \frak n$ is integrable.
\end{cor}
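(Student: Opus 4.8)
The plan is to recognize both hypotheses as the input of Proposition~\ref{holmos}, apply it, and then read integrability off the Nijenhuis tensor. First I would translate hypothesis $(2)$ into the form required by Proposition~\ref{holmos}. By the discussion preceding that proposition, the Mostow fibration $N/\Gamma_{N}\to G/\Gamma\to \C^{n}/\Gamma_{\C^{n}}$ is holomorphic for the fixed $J_{N}$ exactly when its structure group $N\Gamma/\Gamma_{0}$ acts holomorphically, and since $N\Gamma=N\cdot(\Gamma\cap C)$ this amounts to $\Ad_{c}\circ J_{N}=J_{N}\circ \Ad_{c}$ on $\frak n$ for every $c\in\Gamma\cap C$, where $C=C_{T}(G)$. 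Here the complement $V=\C^{n}$ is the trivial $T$-submodule of $\g$, so at the group level $T$ fixes $\C^{n}$ pointwise and $\C^{n}\subseteq C$; as $C\cap N\subseteq N$, the decomposition $G=C\cdot N$ coincides with $G=\C^{n}\cdot N$. Thus hypothesis $(2)$ is precisely the hypothesis of Proposition~\ref{holmos}.

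Next I would apply Proposition~\ref{holmos}. This produces a finite index subgroup $\tilde\Gamma\subset\Gamma$, an $S$-modification $G^{S}$ containing $\tilde\Gamma$ with $S$ complementary to a maximal torus of $\mathcal A(\Ad(\tilde\Gamma))$, and a simply connected nilpotent subgroup $C^{\prime}=\{\pi_{S}(c)^{-1}c\mid c\in C\}\subset G^{S}$ with $G^{S}=C^{\prime}\cdot N$ and $\Ad_{c^{\prime}}\circ J_{N}=J_{N}\circ \Ad_{c^{\prime}}$ on $\frak n$ for every $c^{\prime}\in C^{\prime}$. Since $N$ is unipotent, $\pi_{S}$ is trivial on $N$, so the modification leaves the product on $N$ unchanged and only alters the action of the complement; the subset $\{(t,e)\mid t\in\C^{n}\}$ stays an abelian subgroup $\C^{n}$ of $G^{S}$ contained in $C^{\prime}$, and one obtains a semidirect decomposition $G^{S}=\C^{n}\ltimes_{\tilde\phi}N$ whose structure automorphisms are $\tilde\phi(t)=\Ad^{G^{S}}_{(t,e)}\vert_{\frak n}$. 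Restricting the commuting relation from $C^{\prime}$ to this $\C^{n}\subseteq C^{\prime}$ yields $\tilde\phi(t)\circ J_{N}=J_{N}\circ\tilde\phi(t)$ for every $t\in\C^{n}$. I expect this bookkeeping --- checking that $C^{\prime}$ genuinely recovers the semidirect factor $\C^{n}$ together with its action $\tilde\phi$ --- to be the only delicate point; crucially, the commuting relation itself is already produced inside Proposition~\ref{holmos} by the Zariski-closure argument, which spreads it from the lattice $\Gamma\cap C$ to $\mathcal A(\Gamma\cap C)\supseteq C^{\prime}$, so no explicit computation of $\pi_{S}$ is required --- important here, since $J$ is not assumed $\Ad_{s}$-invariant.

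Finally I would deduce integrability of $J_{\C^{n}}\oplus J_{N}$ on $\g^{S}=\C^{n}\ltimes_{\tilde\phi}\frak n$ from the Nijenhuis tensor, whose value on a pair $(X,Y)$ is $[JX,JY]-[X,Y]-J[JX,Y]-J[X,JY]$. On pairs from $\C^{n}$ it vanishes because $J_{\C^{n}}$ preserves the abelian subalgebra $\C^{n}$, and on pairs from $\frak n$ it vanishes by integrability of $J_{N}$. For a mixed pair $X\in\C^{n}$, $Y\in\frak n$ one has $[X,Y]=d\tilde\phi(X)(Y)\in\frak n$, and expanding the four terms with $d\tilde\phi(Z)\circ J_{N}=J_{N}\circ d\tilde\phi(Z)$ for $Z=X$ and $Z=JX$ makes them cancel in two pairs. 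Hence the Nijenhuis tensor vanishes and $J$ is integrable, which completes the argument.
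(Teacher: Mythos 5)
Your proposal is correct and follows the paper's own route: the paper derives Corollary~\ref{C-mod} directly from Proposition~\ref{holmos}, exactly as you do, by reading hypothesis $(2)$ as the condition $\Ad_{c}\circ J_{N}=J_{N}\circ \Ad_{c}$ for $c\in\Gamma\cap C$ (with $\C^{n}\subseteq C$ since $T$ fixes the complement pointwise), invoking that proposition, and identifying $\tilde\phi(t)$ with $\Ad_{c'}\vert_{\frak n}$ for the corresponding $c'\in C'$. Your Nijenhuis-tensor computation supplies the ``in particular'' integrability claim that the paper leaves implicit, and it is carried out correctly.
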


\begin{ex}
Let $G=\R^{2}\ltimes_{\phi} \R^{8}$ with
{\small \[\phi(x,y)=\left(\begin{array}{cccccccc}
e^{x}\cos y&-e^{x}\sin y&0&0&0&0&0&0\\
e^{x}\sin y&e^{x}\cos y&0&0&0&0&0&0\\
0&0&e^{x}\cos y&e^{x}\sin y&0&0&0&0 \\
0&0&-e^{x}\sin y&e^{x}\cos y&0&0&0&0\\
0&0&0&0&e^{-x}\cos y&-e^{-x}\sin  y&0&0\\
0&0&0&0&e^{-x}\sin y&e^{-x}\cos  y&0&0\\
0&0&0&0&0&0&e^{-x}\cos y&e^{-x}\sin  y\\
0&0&0&0&0&0&-e^{-x}\sin y&e^{-x}\cos  y
\end{array}\right).
\]}
The Lie algebra $\g$ of $G$ is then $\g=\langle X,Y\rangle \ltimes_\phi \frak n$ where $ \frak n=\langle V_{1},W_{1}, V_{2},W_{2},V_{3},W_{3},V_{4},W_{4}\rangle$ with
\[[X,V_{1}]=V_{1},\;\; [X,W_{1}]=W_{1},\;\;[Y,V_{1}]=W_{1},\;\; [Y,W_{1}]=-V_{1},\]
\[[X,V_{2}]=V_{2},\;\; [X,W_{2}]=W_{2},\;\; [Y,V_{2}]=-W_{2},\;\; [Y,W_{2}]=V_{2},\]
\[[X,V_{3}]=-V_{3},\;\; [X,W_{3}]=-W_{3},\;\; [Y,V_{3}]=W_{3},\;\; [Y,W_{3}]=-V_{3},\]
\[[X,V_{4}]=-V_{4},\;\; [X,W_{4}]=-W_{4},\;\; [Y,V_{4}]=-W_{4},\;\; [Y,W_{4}]=V_{4}.\]
Consider  on $\R^8$ the left-invariant complex structure $J_{\R^{8}}$ 
given by 
\[J_{\R^{8}}V_{1}=V_{2},\;\; J_{\R^{8}}W_{1}=W_{2},\;\;J_{\R^{8}}V_{3}=V_{4},\;\;J_{\R^{8}}W_{3}=W_{4}.\]
Then, since $[Y,J_{\R^8} V_{1}]=-W_{2}$ and $J_{\R^8}[Y, V_{1}]=W_{2}$, for $y\not=0$ we have
$\phi(0,y)\circ  J_{\R^8} \not= J_{\R^8}\circ \phi(0,y)$  and for $J_{\R^{2}}$ with $J_{\R^{2}}(X)=Y$ the almost complex structure $J_{\R^{2}}\oplus  J_{\R^{8}}$ is not integrable. 
We can take as a lattice of $G$ the subgroup  $\Gamma=(a\Z+2\pi\sqrt{-1}\Z)\ltimes \Gamma^{\prime\prime}$ for some $a\in \R$ and some lattice $\Gamma^{\prime\prime}$ in $\R^{8}$.
Then we can apply Corollary \ref{C-mod}. Indeed,  $J_{\R^8}$ satisfies the  condition
 $\phi(t)\circ J_{\R^8}=J_{\R^8}\circ \phi(t)$,  for every $t \in  a\Z+2\pi\sqrt{-1}\Z$. The modification $G^S$ is given by 
$G^{S}=\R^{2}\ltimes_{\tilde\phi} \R^{8}$ with
\[\tilde \phi(x,y)=\left(\begin{array}{cccccccc}
e^{x}&0&0&0&0&0&0&0\\
0&e^{x}&0&0&0&0&0&0\\
0&0&e^{x}&0&0&0&0&0 \\
0&0&0&e^{x}&0&0&0&0\\
0&0&0&0&e^{-x}&0&0&0\\
0&0&0&0&0&e^{-x}&0&0\\
0&0&0&0&0&0&e^{-x}&0\\
0&0&0&0&0&0&0&e^{-x}
\end{array}\right).\]
Moreover, $G^{S}$ contains $\Gamma$ and 
 $\tilde\phi(x,y)\circ J_{\R^8}= J_{\R^8}\circ \tilde\phi(x,y),$ for every $(x,y)\in \R^{2}$. Thus  $J_{\R^{2}}\oplus  J_{\R^{8}}$ is integrable on $\cg^S$.
\end{ex}

\section{Dolbeault cohomology}\label{Dolb_gen}

The aim of this section is to construct  a differential bigraded algebra (DBA) which allows to compute the Dolbeault cohomology  of a solvmanifold   $(G/\Gamma, J)$ endowed with an invariant complex structure $J$, in the case that  the  Mostow fibration is holomorphic  and $J$ commutes with  $\Ad_s$.

We first recall some results in \cite{Kas1}.  Consider a solvable Lie group $G$ which is a semi-direct product of the form $\C^{n}\ltimes _{\phi}N$ where:
\begin{itemize}
\item  $N$ is a simply connected nilpotent Lie group with the Lie algebra $\frak n$ and a left-invariant complex structure $J_N$;
\item for any $t\in \C^{n}$, $\phi(t)$ is a holomorphic automorphism of $(N,J_N)$;
\item $\phi$ induces a semi-simple action on the Lie algebra $\frak n$ of $N$.
\item $G$ has a lattice $\Gamma$ (then $\Gamma$ can be written by $\Gamma=\Gamma^{\prime}\ltimes_{\phi}\Gamma^{\prime\prime}$ such that $\Gamma^{\prime}$ and $\Gamma^{\prime\prime}$ are  lattices of $\C^{n}$ and $N$ respectively and for any $t\in \Gamma^{\prime}$ the action $\phi(t)$ preserves $\Gamma^{\prime\prime}$); 
\item the inclusion $\bigwedge^{\ast,\ast}\frak n^{\ast}\subset A^{\ast,\ast}(N/\Gamma^{\prime\prime})$ induces an isomorphism 
\[H^{\ast,\ast}_{\bar\partial}(\frak n)\cong H^{\ast,\ast}_{\bar\partial }(N/\Gamma^{\prime\prime})\, .\]
\end{itemize}
By using the Borel spectral sequence (see \cite{FW})  of the holomorphic fibration 
\[N/\Gamma_{N}\to G/\Gamma\to \C^{n}/\Gamma_{\C^{n}}\]
determined by the splitting (in fact this spectral sequence is degenerate at $E_{2}$ (see \cite[Section. 4]{Kas1}),
the third author  constructed in  \cite{Kas1}  an explicit finite dimensional sub-DBA of the Dolbeault complex  $A^{\ast,\ast}(G/\Gamma)$ which computes the Dolbeault cohomology of $(G/\Gamma, J)$.

\smallskip

Given a complex representation $V_{\rho}$ of a simply connected complex abelian Lie group we define as in \cite{Raghunathan} the holomorphic flat bundle $E_{\rho} = (A \times  V_{\rho})/\Gamma$ given by the equivalence relation $(\gamma g, \rho(\gamma) v) \sim (g, v)$ for $g \in A$, $v \in V_{\rho}$ and $\gamma \in \Gamma$. We can prove the following

\begin{lemma}\label{tori}
Let $A$ be a simply connected complex abelian Lie group with a lattice $\Gamma$, $\frak a$ the Lie algebra of $A$ and $\rho: A\to GL(V_{\rho})$  a representation of $A$ on a complex vector space $V_{\rho}$.
Consider the generalized weight decomposition (i.e. the weight decomposition of the semisimple part $\rho_{s}$ of $\rho$)
\[V_{\rho}=\bigoplus V_{\rho_{\alpha_{i}}}.
\]
Take unitary characters $\beta_{i}$ such that $\alpha_i\beta_i^{-1}$ are holomorphic  on $A$ as in \cite[Lemma 2.2]{Kas1}.

Then we have an isomorphism
\[H^{\ast,\ast}(A/\Gamma, E_{\rho})\cong \bigoplus_{\beta_{i}\vert\Gamma=1} H^{\ast,\ast}(\frak a, V_{\beta^{-1}_{i}\rho_{\alpha_{i}}})\, ,
\]
where  $E_{\rho}$ is the holomorphic flat vector bundle induced by the representation $\rho$.
\end{lemma}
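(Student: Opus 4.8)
The plan is to compute $H^{*,*}(A/\Gamma,E_{\rho})$ by trivializing $E_{\rho}$ smoothly so that the Dolbeault operator acquires an \emph{invariant} connection form, and then to run Hodge/Fourier theory on the flat complex torus $A/\Gamma$, with the unitary characters $\beta_{i}$ playing the role of a selection rule. First I would reduce to a single generalized weight. Since $A$ is abelian, the commuting family $\rho(A)$ admits a simultaneous generalized weight decomposition $V_{\rho}=\bigoplus_{i}V_{\rho_{\alpha_{i}}}$ into $\rho(A)$-invariant subspaces, on each of which $\rho(g)=\alpha_{i}(g)\,u_{i}(g)$ with $u_{i}$ unipotent. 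This splits $E_{\rho}=\bigoplus_{i}E_{\rho_{\alpha_{i}}}$ as holomorphic flat bundles, whence $H^{*,*}(A/\Gamma,E_{\rho})=\bigoplus_{i}H^{*,*}(A/\Gamma,E_{\rho_{\alpha_{i}}})$, and it suffices to analyse one factor and then sum.

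Next, identify $A\cong\C^{n}$ and trivialize $E_{\rho_{\alpha_{i}}}$ by $f\mapsto\rho_{\alpha_{i}}(z)^{-1}f$: a $\Gamma$-equivariant $V_{\rho_{\alpha_{i}}}$-valued form descends to an ordinary form on the torus, and $\bar\partial$ is conjugated into $\bar\partial+\theta_{i}$, where $\theta_{i}$ is the $(0,1)$-part of $\rho_{\alpha_{i}}^{-1}\,d\rho_{\alpha_{i}}$. Because this is the pullback of the left Maurer--Cartan form of $GL(V_{\rho_{\alpha_{i}}})$ under a homomorphism of the abelian group $A$, it is a constant-coefficient, hence left-invariant, $\mathrm{End}(V_{\rho_{\alpha_{i}}})$-valued form. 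Writing $\alpha_{i}=h_{i}\beta_{i}$ with $h_{i}=\alpha_{i}\beta_{i}^{-1}$ holomorphic by \cite[Lemma 2.2]{Kas1}, one has $\alpha_{i}^{-1}\bar\partial\alpha_{i}=\beta_{i}^{-1}\bar\partial\beta_{i}$, so on $\frak a^{0,1}$ the form $\theta_{i}$ equals $(\beta_{i}^{-1}\bar\partial\beta_{i})\,\mathrm{Id}+u_{i}^{-1}\bar\partial u_{i}$. The second summand is precisely the antiholomorphic part of the differential of $\beta_{i}^{-1}\rho_{\alpha_{i}}=h_{i}u_{i}$, i.e.\ the Chevalley--Eilenberg differential of the module $V_{\beta_{i}^{-1}\rho_{\alpha_{i}}}$, while the first summand is the genuine $\beta_{i}$-twist.

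Now run the dichotomy. Equip the torus with a flat metric and $V_{\rho_{\alpha_{i}}}$ with a Hermitian metric, and expand into Fourier modes indexed by the unitary characters of $A/\Gamma$; the operator $\bar\partial+\theta_{i}$ respects this expansion, the constant term $\beta_{i}^{-1}\bar\partial\beta_{i}$ shifting the effective frequency. If $\beta_{i}|_{\Gamma}=1$, then $\beta_{i}$ descends to a unitary function on $A/\Gamma$, and conjugating by multiplication by $\beta_{i}$ removes the term $(\beta_{i}^{-1}\bar\partial\beta_{i})\,\mathrm{Id}$, leaving exactly $\bar\partial+u_{i}^{-1}\bar\partial u_{i}$; harmonic theory then identifies the cohomology with that of the invariant subcomplex $\bigwedge^{*,*}\frak a^{*}\otimes V_{\beta_{i}^{-1}\rho_{\alpha_{i}}}$, i.e.\ $H^{*,*}(\frak a,V_{\beta_{i}^{-1}\rho_{\alpha_{i}}})$. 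If instead $\beta_{i}|_{\Gamma}\neq1$, the shifted frequency never meets the dual lattice, the leading Laplacian term is invertible on every mode, and the cohomology of that weight vanishes. Summing over $i$ yields $\bigoplus_{\beta_{i}|_{\Gamma}=1}H^{*,*}(\frak a,V_{\beta_{i}^{-1}\rho_{\alpha_{i}}})$. For general bidegree one tensors throughout with the space $\bigwedge^{p}(\frak a^{1,0})^{*}$ of invariant holomorphic $p$-forms, which is $\bar\partial$-closed and flat and so merely multiplies the $(0,q)$-computation.

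The main obstacle is the harmonic step in the presence of the unipotent part $u_{i}$: one must show that the off-resonant Fourier modes contribute nothing despite the off-diagonal nilpotent terms, and that the resonant mode reproduces exactly the Chevalley--Eilenberg differential of $V_{\beta_{i}^{-1}\rho_{\alpha_{i}}}$. I would settle this either through the degeneration at $E_{2}$ of the Borel spectral sequence of the holomorphic torus fibration \cite{FW} already used in \cite{Kas1}, or by a $u_{i}$-invariant filtration of $V_{\rho_{\alpha_{i}}}$ reducing to the rank-one case, where the vanishing for $\beta_{i}|_{\Gamma}\neq1$ is the classical fact that a nontrivial unitary flat line bundle on a complex torus has no Dolbeault cohomology.
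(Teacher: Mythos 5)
Your proposal is correct, and in substance it shares its skeleton with the paper's proof, though your primary packaging is more analytic. The paper argues purely holomorphically and algebraically: for each generalized weight it uses triangularizability of $\rho_{\alpha_{i}}$ to produce a flag of holomorphic flat sub-bundles $E_{\rho_{\alpha_{i}}}=E^{0}_{\rho_{\alpha_{i}}}\supset E^{1}_{\rho_{\alpha_{i}}}\supset\cdots$ whose successive quotients are all isomorphic to the flat line bundle $L_{\alpha_{i}}$; it then quotes \cite[Lemma 2.3, Proposition 2.4]{Kas1} for the dichotomy $H^{\ast,\ast}(A/\Gamma,L_{\alpha_{i}})=0$ when $\beta_{i}\vert_{\Gamma}\neq 1$, and inducts along the flag to get vanishing of $H^{\ast,\ast}(A/\Gamma,E_{\rho_{\alpha_{i}}})$; when $\beta_{i}\vert_{\Gamma}=1$ it first replaces $E_{\rho_{\alpha_{i}}}$ by the isomorphic $E_{\beta_{i}^{-1}\rho_{\alpha_{i}}}$, so that the line-bundle quotients become trivial, and the same induction shows the inclusion of the invariant complex $\bigwedge^{\ast,\ast}\frak a^{\ast}\otimes V_{\beta_{i}^{-1}\rho_{\alpha_{i}}}$ is a quasi-isomorphism. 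Your main narrative instead trivializes the bundle smoothly, exhibits the invariant connection form $\theta_{i}=(\beta_{i}^{-1}\bar\partial\beta_{i})\,\mathrm{Id}+u_{i}^{-1}\bar\partial u_{i}$, and runs Fourier/Hodge theory with a resonance dichotomy, in effect re-proving the line-bundle input of \cite{Kas1} rather than citing it; your identification of $u_{i}^{-1}\bar\partial u_{i}$ with the Chevalley--Eilenberg differential of $V_{\beta_{i}^{-1}\rho_{\alpha_{i}}}$, and of the conjugation by $\beta_{i}$ (legitimate exactly when $\beta_{i}\vert_{\Gamma}=1$) with the paper's holomorphic twist, are both correct. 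Note, however, that the ``main obstacle'' you flag (the unipotent part interfering with harmonic theory) is resolved in the paper precisely by your second fallback: the $u_{i}$-invariant flag reducing everything to the rank-one case. So both proofs rest on the same mechanism --- weight decomposition, dichotomy on $\beta_{i}\vert_{\Gamma}$, twist by a holomorphic character, nilpotent filtration --- but yours buys a self-contained analytic treatment that makes the selection rule transparent, while the paper's buys brevity and avoids introducing any metric or Fourier expansion by leveraging the already-established results of \cite{Kas1}.
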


\begin{proof}
Let $E_{\rho_{\alpha_{i}}}$ be the holomorphic flat vector bundle induced by the representation $\rho_{\alpha_{i}}$.
Since $\rho_{\alpha_{i}}$ is triangularizable, we have a sequence
\[E_{\rho_{\alpha_{i}}}=E^{0}_{\rho_{\alpha_{i}}}\supset E^{1}_{\rho_{\alpha_{i}}}\supset E^{2}_{\rho_{\alpha_{i}}}\dots \supset E^{j}_{\rho_{\alpha_{i}}}=(A/\Gamma)\times \{0\}
\]
 of sub-bundles such that $E^{k}_{\rho_{\alpha_{i}}}/ E^{k+1}_{\rho_{\alpha_{i}}}\cong L_{\alpha_{i}}$ where $L_{\alpha_{i}}$ is the holomorphic flat line bundle defined by the character $\alpha_{i}$.
It is known that 
\[H^{\ast,\ast}(A/\Gamma, L_{\alpha_{i}})=0
\] 
if $\beta_{i}\vert_{\Gamma}\not=1$ see \cite[Lemma 2.3, Proposition 2.4]{Kas1}.
Hence, inductively, if  $\beta_{i}\vert_{\Gamma}\not=1$,  we get 
\[H^{\ast,\ast}(A/\Gamma, E_{\rho_{\alpha_{i}}})=0.
\]
If $\beta_{i}\vert_{\Gamma}=1$, we have
\[H^{\ast,\ast}(A/\Gamma, E_{\rho_{\alpha_{i}}})\cong H^{\ast,\ast}(A/\Gamma, E_{\beta_i^{-1} \rho_{\alpha_{i}}}).\]
Consider the natural inclusion
\[\bigwedge ^{\ast,\ast}{\frak a}^{\ast}\otimes  V_{\beta^{-1}_{i}\rho_{\alpha_{i}}}\subset A^{\ast,\ast}(A/\Gamma, E_{\beta^{-1}_{i}\rho_{\alpha_{i}}}).
\]
Then since the line bundle $L_{\beta_{i}^{-1}\alpha_{i}}$ is trivial,  inductively  this inclusion   induces a cohomology isomorphism.
Hence the lemma follows.
\end{proof}

Let $G$ be a simply connected solvable Lie group  with a left-invariant complex structure $J$,  $\Gamma$  be a   lattice of $G$ and $N$ be the nilradical of $G$.

In the same setting as in Section \ref{hol_Mostow}, we suppose that  Condition \ref{condd} holds.
By Corollary \ref{ficc}, we have $(\Ad_{s})_g\circ J=J\circ (\Ad_{s})_g$ and we can take  a basis $X_{1},\dots, X_{n},Y_{1},\dots, Y_{m}$ of $\g^{1,0}$ such that $Y_{1},\dots, Y_{m}$ is a basis of $\frak n^{1,0}$ and  $(\Ad_{s})_g Y_{i}=\alpha_{i}(g)Y_{i}$ for any $g\in G$.
Consider unitary characters $\beta_{i}$ and $\gamma_{i}$ such that $\alpha_{i}\beta_{i}^{-1}$ and $\bar\alpha_{i}\gamma^{-1}$ are holomorphic on $G/N$ as in \cite[Lemma 2.2]{Kas1}.
Let $x_{1},\dots,x_{n},y_{1}\dots,y_{m}$ be the dual basis of $X_{1},\dots, X_{n},Y_{1},\dots, Y_{m}$.
Define the differential bigraded algebra (DBA)
\[B_{\Gamma}^{p,q}=\bigoplus_{a+c=p,b+d=q}\bigwedge^{a} \langle x_{1},\dots,x_{n}\rangle \otimes \bigwedge^{b} \langle \bar x_{1},\dots,\bar x_{n}\rangle \otimes {\rm span} \left\langle\beta_{I}\gamma_{J} y_{I}\wedge\bar y_{J}{\Big \vert} \begin{array}{cc}\vert I\vert=c,\, \vert J\vert=d \\  (\beta_{J}\gamma_{L})_{\vert_{\Gamma}}=1\end{array}\right\rangle
\]

\begin{theorem}\label{dolbb}

Suppose that the complex nilmanifold $(N/N\cap \Gamma,J_{N})$ admits good Dolbeault cohomology (i.e.  $H_{\overline{\partial}}^{*,*} (N/\Gamma 	\cap N) \cong H_{\overline{\partial}}^{*,*} (\frak n)$).
Then the inclusion $B_{\Gamma}^{\ast,\ast}\subset A^{\ast,\ast}(G/\Gamma) $ induces a cohomology isomorphism.
\end{theorem}
\begin{proof}
By 
\[A^{\ast,\ast}(G/\Gamma)=C^{\infty}(G/\Gamma)\otimes \bigwedge ^{\ast,\ast}\g^{\ast}\]
and
\[\bigwedge \langle x_{1},\dots,x_{n},y_{1},\dots, y_{m}\rangle \otimes \bigwedge \langle \bar x_{1},\dots,\bar x_{n},\bar y_{1},\dots, \bar y_{m}\rangle=\bigwedge ^{\ast,\ast} \cg^*,\]
we define the filtration 
{\small{
$$
%\begin{multline*}
F^{r} A^{p,q}(G/\Gamma)
%\\
=\bigoplus_{\begin{subarray}{l}
        a+c=p, \\b+d=q,\\ a+b\ge r
      \end{subarray}}
C^{\infty}(G/\Gamma)\otimes \bigwedge ^{a}\langle x_{1},\dots,x_{n}\rangle \otimes \bigwedge^{b} \langle \bar x_{1},\dots,\bar x_{n}\rangle\otimes \bigwedge ^{c}\langle y_{1},\dots, y_{m}\rangle\otimes \bigwedge ^{d}\langle \bar y_{1},\dots, \bar y_{m}\rangle.
$$
%\end{multline*}
}}
This filtration yields the Borel spectral sequence $\,^{\ast} E^{\ast,\ast}_{\ast}$ of the holomorphic Mostow fibration (see \cite{FW}).
We have 
\[\,^{p}E_{1}^{s,t}=\bigoplus_{i}A^{i,s-i}(G/\Gamma, {\bf H}^{p-i, t+i}(N/N\cap \Gamma))
\]
and
\[\,^{p}E_{2}^{s,t}=\bigoplus_{i}H^{i,s-i}(G/\Gamma, {\bf H}^{p-i, t+i}(N/N\cap \Gamma))
\]
where ${\bf H}^{\ast, \ast}(N/N\cap \Gamma)$ is the holomorphic vector bundle $\bigcup_{b \in G/N\Gamma}H^{\ast,\ast}_{\bar\partial}(p^{-1}(b))$ over $G/N\Gamma$ for the fibration $p:G/\Gamma\to G/N\Gamma$.
By the filtration 
associated to $B_{\Gamma}^{p,q}$ 
 we get
\[F^{r}B_{\Gamma}^{p,q}=\bigoplus_{\begin{subarray}{l}
        a+c=p, \\b+d=q,\\ a+b\ge r
      \end{subarray}}\bigwedge^{a} \langle x_{1},\dots,x_{n}\rangle \otimes \bigwedge^{b} \langle \bar x_{1},\dots,\bar x_{n}\rangle \otimes {\rm span} \left\langle\beta_{I}\gamma_{J} y_{I}\wedge\bar y_{J}{\Big \vert} \begin{array}{cc}\vert I\vert=c,\, \vert J\vert=d \\  (\beta_{J}\gamma_{L})_{\vert_{\Gamma}}=1\end{array}\right\rangle
\]
and $\bar\partial F^{r}B_{\Gamma}^{p,q}\subset F^{r}B_{\Gamma}^{p,q+1}$.
Hence the filtration induces the spectral sequence $\,^{\ast}\tilde E_{\ast}^{\ast,\ast}$ of $B_{\Gamma}^{\ast,\ast}$ and the homomorphism
\[\hat i: \tilde E_{\ast}^{\ast,\ast}\to  E_{\ast}^{\ast,\ast}\]
determined by the inclusion $i:B^{\ast,\ast}_{\Gamma}\to  A^{\ast,\ast}(G/\Gamma)$.
We will prove that this homomorphism is an isomorphism at $E_{2}$-term.

We first show  that this homomorphism is injective.
Since $G$ is unimodular, we can take a bi-invariant Haar measure $\eta$ such that $\int_{G/\Gamma}\eta=1$.
Consider the map $\phi:A^{\ast,\ast}(G/\Gamma)\to B_{\Gamma}^{\ast,\ast}$ defined by
\[\phi \left (\sum f_{IJKL}x_{I}\wedge y_{J}\wedge \bar x_{K}\wedge \bar y_{L} \right )=\sum_{(\beta_{I}\gamma_{J} )_{\vert_{\Gamma}}=1}  \left(\int_{G/\Gamma}\frac{f_{IJKL}}{\beta_{I}\gamma_{J} }\eta\right) \beta_{I}\gamma_{J}  x_{I}\wedge y_{J}\wedge \bar x_{K}\wedge \bar y_{L}\]
of the terms $\sum f_{IJKL}x_{I}\wedge y_{J}\wedge \bar x_{K}\wedge \bar y_{L}$.
It is known that for any $\mathcal{C}^\infty$ function $F$ on $G/\Gamma$ and any left-invariant vector field $A$,
we have $\int_{G/\Gamma} A(F)\eta=0$ (see \cite{Bel}).
Hence we get
\[
\begin{array}{ll}
%\begin{multline*}
\phi\left(\bar\partial(\sum f_{IJKL}x_{I}\wedge y_{J}\wedge \bar x_{K}\wedge \bar y_{L})\right)
%\\
&=\sum_{(\beta_{I}\gamma_{J} )_{\vert_{\Gamma}}=1}  \left(\int_{G/\Gamma}\frac{f_{IJKL}}{\beta_{I}\gamma_{J} }\eta\right) \bar\partial(\beta_{I}\gamma_{J}  x_{I}\wedge y_{J}\wedge \bar x_{K}\wedge \bar y_{L})\\[10pt]
&=\bar\partial(\phi(\sum f_{IJKL}x_{I}\wedge y_{J}\wedge \bar x_{K}\wedge \bar y_{L}))
\end{array}
\]
%\end{multline*}
and consequently $\phi$ is a homomorphism of cochain complexes.
By the definition of $\phi$, we have $\phi(F^{r}A^{p,q}(G/\Gamma))\subset F^{r}B^{p,q}_{\Gamma}$ and $\phi\circ i=id_{B^{\ast,\ast}_{\Gamma}}$ for the inclusion $i:B^{\ast,\ast}_{\Gamma}\to A^{\ast,\ast}(G/\Gamma)$.
Hence $\phi$ induces a homomorphism $\hat\phi: E_{\ast}^{\ast,\ast}\to\tilde E_{\ast}^{\ast,\ast}$ such that $\hat\phi \circ \hat i=id_{E_{\ast}^{\ast,\ast}}$
and for any term, $\hat i: \tilde E_{\ast}^{\ast,\ast}\to  E_{\ast}^{\ast,\ast}$ is injective.

Consider the action $\rho:G/N\to \Aut (H^{\ast,\ast}(\frak n))$ induced by the adjoint action of $G$ on $\cn$.
Then since $H^{\ast,\ast}(N/N\cap \Gamma)\cong H^{\ast,\ast}(\frak n)$, we get that ${\bf H}^{\ast,\ast}(N/N\cap \Gamma)$ is the holomorphic flat bundle induced by $\rho$.
Take the generalized weight decomposition
\[H^{\ast,\ast}(\frak n)=\bigoplus V_{\rho_{\delta_{i}}}\]
and take the unitary characters $\epsilon_{i}$ such that $\delta_{i}\epsilon_{i}^{-1}$ are holomorphic.
By Lemma \ref{tori}, we have
\[\bigoplus \,^{\ast}E_{2}^{\ast,\ast}=\bigoplus H^{\ast,\ast}(G/N\Gamma, {\bf H}^{\ast, \ast}(N/N \cap \Gamma))\cong  H^{\ast,\ast}\left(\g/\frak n, \bigoplus_{\epsilon_{i}\vert_{\Gamma}=1} V_{\epsilon^{-1}_{i}\rho_{\delta_{i}}}\right ).
\]
Observe that $\rho=\Ad_s$ on $G/N \Gamma$. 
Let $\sigma: G/N\to \Aut(\bigwedge^{\ast,\ast}
\frak n)$ be the action induced by $\Ad_{s}$ and  \[\bigwedge^{\ast,\ast}
\frak n=\bigoplus V_{\sigma_{\delta_{i}}}
\]
the generalized weight decomposition. 
Then the $V_{\sigma_{\delta_{i}}}$ are sub-cochain complexes of $\bigwedge^{\ast,\ast}
\frak n$ and this decomposition is a direct sum of cochain complexes.
Hence
\[B_{\Gamma}^{\ast,\ast}=\bigwedge^{\ast} \langle x_{1},\dots,x_{n}\rangle \otimes \bigwedge^{\ast} \langle \bar x_{1},\dots,\bar x_{n}\rangle \otimes \bigoplus_{\epsilon_{i}\vert_{\Gamma}=1} \epsilon_{i}V_{\sigma_{\delta_{i}}}.
\]
Since $\sigma: G/N\to \Aut(\bigwedge^{\ast,\ast}
\frak n)$ induces the semisimple part of the action $\rho:G/N\to \Aut (H^{\ast,\ast}(\frak n))$, 
we have
\[H^{\ast,\ast}(V_{\sigma_{\delta_{i}}})\cong V_{\rho_{\delta_{i}}}\, ,\]
and consequently 
\[\bigoplus \,^\ast \tilde E^{\ast,\ast}_{1}=\bigoplus \bigwedge^{\ast} \langle x_{1},\dots,x_{n}\rangle \otimes \bigwedge^{\ast} \langle \bar x_{1},\dots,\bar x_{n}\rangle \otimes  \epsilon_{i}V_{\rho_{\delta_{i}}}\cong \bigoplus_{\epsilon_{i}\vert_{\Gamma}=1}\bigwedge^{\ast,\ast} (\g/\frak n)\otimes  V_{\epsilon_{i}^{-1}\rho_{\delta_{i}}}
\]
and 
\[\bigoplus \,^\ast \tilde E^{\ast,\ast}_{2}\cong H^{\ast,\ast}\left (\g/\frak n, \bigoplus_{\epsilon_{i}\vert_{\Gamma}=1} V_{\epsilon^{-1}_{i}\rho_{\delta_{i}}} \right ).\]
So  the theorem follows.
\end{proof}

\begin{ex}\label{nonsplit}
Let $\g={\rm {span}}\langle A_{1}, A_{2}, B_{1}, B_{2}, C_{1}, C_{2}, C_{3}, C_{4}, D_{1}, D_{2},D_{3},D_{4}\rangle$ be the Lie algebra with structure equations
\[[A_{1},A_{2}]=B_{1},
\]
\[[A_{1},C_{1}]=k_{0}C_{1},\;[A_{1},C_{2}]=k_{0}C_{2},\; [A_{1},C_{3}]=-k_{0}C_{3},\;  [A_{1},C_{4}]=-k_{0}C_{4},\]
\[[A_{2},D_{1}]=k_{0}D_{1},\;[A_{2},D_{2}]=k_{0}D_{2},\; [A_{2},D_{3}]=-k_{0}D_{3},\;  [A_{1},D_{4}]=-k_{0}D_{4},\]
where $k_{0}$ is a real constant number so that
the matrix $\left(  \begin{array}{cc}
e^{k_{0}}&0\\
0&e^{-k_{0}}
\end{array}\right)$ is conjugate to an element of $SL(2,\Z)$.
Then, the nilradical has Lie algebra $\frak n=\langle B_{1}, B_{2}, C_{1}, C_{2}, C_{3}, C_{4}, D_{1}, D_{2},D_{3},D_{4}\rangle$ and the extension
\[0\to \frak n\to \g\to \g/\frak n\to 0
\]
cannot split.
Consider the complex structure $J$ on $\g$ defined by 
\[JA_{1}=A_{2},\; JB_{1}=B_{2},\; JC_{1}=C_{2},\; JC_{3}=C_{4},\; JD_{1}=D_{2},\; JD_{3}=D_{4}.
\]
Let $G$ be the simply connected Lie group  with the Lie algebra $\g$ and $N$ the nilradical.
Then $G$ cannot split as $G=\C\ltimes N$, but on the other hand we have
\[G=(H_{3}(\R)\times \R)\ltimes_{\phi}\C^{4}\, ,\]
where 
\[\phi\left(\left(  \begin{array}{ccc}
1&s_{1}&t_{1}\\
0&1&s_{2}\\
0&0&1
\end{array}\right), t_{2}\right)=\left(  \begin{array}{cccc}
e^{k_{0}s_{1}}&0&0&0\\
0&e^{-k_{0}s_{1}}&0&0\\
0&0&e^{k_{0}s_{2}}&0\\
0&0&0&e^{-k_{0}s_{2}}
\end{array}\right)
\]
and $H_{3}(\R)$ is the three dimensional Heisenberg group.
$G$ has a lattice $\Gamma=(H_{3}(\Z)\times \Z)\ltimes_{\phi}\Gamma_{\C^{4}}$.
We may take $V= {\rm {span}}\langle A_{1},\; A_{2}\rangle$ such that $\cg=V \oplus \cn$ for which $\ad(A)_s (B)=0$ for any $A, B \in V$. The nilpotent subgroup $C=H_{3}(\R)\times \R$ is a nilpotent complement of $N$ such that $G=C \cdot N$.
Then by  Corollary \ref{ficc}, $(G/\Gamma,J)$ satisfies the assumption in Theorem \ref{dolbb}.
Let $X_{1}=A_{1}-\sqrt{-1}A_{2}$, $Y_{1}=B_{1}-\sqrt{-1}B_{2}$, $Y_{2}=C_{1}-\sqrt{-1}C_{2}$, $Y_{3}=C_{3}-\sqrt{-1}C_{4}$, $Y_{4}=D_{1}-\sqrt{-1}D_{2}$ and $Y_{5}=D_{3}-\sqrt{-1}D_{4}$.
Then we get  $\g^{1,0}={\rm {span}}\langle X_{1},\; Y_{1},\;Y_{2},\;Y_{3},\; Y_{4},\; Y_{5}\rangle$ and $\frak n^{1,0}={\rm {span}}\langle Y_{1},\;Y_{2},\;Y_{3},\; Y_{4},\; Y_{5}\rangle$.
Take the dual basis $x_{1},y_{1},y_{2},y_{3},y_{4},y_{5}$ of $X_{1}, \; Y_{1},\;Y_{2},\;Y_{3},\; Y_{4},\; Y_{5}$. 
We have
\[B^{\ast,\ast}_{\Gamma}=\bigwedge \langle x_{1},\, \bar x_{1}\rangle\otimes \bigwedge \langle y_{1},\, \bar y_{1}, \, y_{2}\wedge y_{3},\,  y_{2}\wedge\bar y_{3},\, y_{3}\wedge\bar y_{2},\,  \bar y_{2}\wedge \bar y_{3}, y_{4}\wedge y_{5},\, y_{4}\wedge\bar y_{5},\, y_{5}\wedge\bar y_{4},\,  \bar y_{4}\wedge \bar y_{5} \rangle
\]
and consequently 
{\small{
\[
%\begin{multline*}
H^{\ast,\ast}(G/\Gamma)\\
\cong H^{\ast,\ast}((H_{3}(\R)\times \R)/_{(H_{3}(\Z)\times \Z)})
\otimes \bigwedge \langle  \, y_{2}\wedge y_{3},\,  y_{2}\wedge\bar y_{3},\, y_{3}\wedge\bar y_{2},\,  \bar y_{2}\wedge \bar y_{3}, y_{4}\wedge y_{5},\, y_{4}\wedge\bar y_{5},\, y_{5}\wedge\bar y_{4},\,  \bar y_{4}\wedge \bar y_{5} \rangle.
%\end{multline*}
\]
}}
\end{ex}

\begin{rem}
Suppose that $G$  is the semi-direct product $\C^{n}\ltimes _{\phi} L$  such that 
$L$  is a simply connected nilpotent Lie group with the Lie algebra $\frak l$.
Then in general $L$ is not necessarily the nilradical of $G$.
On the other hand, in this case, we have $\Ad_{s}=\id_{\C^{n}}\oplus\phi_{s}$ on $\g=\C^{n}\ltimes  \frak l$ where $\phi_{s}$  is a semi-simple part of $\phi:\C^{n}\to \Aut( \frak l)$.
By a similar proof as the one of Theorem \ref{dolbb},  as a generalization of the result in \cite{Kas1} we obtain the following theorem.
\begin{theorem}\label{split}
Let $G$ be a solvable Lie group which is the semi-direct product  $\C^{n}\ltimes _{\phi} L$ where:
\begin{itemize}
\item  $L$  is a simply connected nilpotent Lie group with the Lie algebra $\frak l$  and a left-invariant complex structure  $J_L$.
\item For any $t\in \C^{n}$, $\phi(t)$ is a holomorphic automorphism of  $(L,J_L)$.\footnote{Note we do not need to suppose $\phi$ is semi-simple.}
\item $G$ has a lattice $\Gamma$. (Then $\Gamma$ can be written by $\Gamma=\Gamma^{\prime}\ltimes_{\phi}\Gamma^{\prime\prime}$ such that $\Gamma^{\prime}$ and $\Gamma^{\prime\prime}$ are  lattices of $\C^{n}$ and  $L$ respectively and for any $t\in \Gamma^{\prime}$ the action $\phi(t)$ preserves $\Gamma^{\prime\prime}$). 
\item The inclusion  $\bigwedge^{\ast,\ast}\frak l^{\ast}\subset A^{\ast,\ast}(H/\Gamma^{\prime\prime})$ induces an isomorphism
\[H^{\ast,\ast}_{\bar\partial}(\frak l)\cong H^{\ast,\ast}_{\bar\partial }(L/\Gamma^{\prime\prime}, J_L).\]
\end{itemize}
Consider a basis $X_{1},\dots, X_{n},Y_{1},\dots, Y_{m}$ of $\g^{1,0}$ such that $Y_{1},\dots, Y_{m}$ is a basis of  $\frak l^{1,0}$ and  $\phi_{s}(g)Y_{i}=\alpha_{i}(g)Y_{i}$ for any $g\in \C^{n}$.
Take unitary characters $\beta_{i}$ and $\gamma_{i}$ such that $\alpha_{i}\beta_{i}^{-1}$ and $\bar\alpha_{i}\gamma^{-1}$ are holomorphic on $\C^{n}$ as in \cite[Lemma 2.2]{Kas1}.
Let $x_{1},\dots,x_{n},y_{1}\dots,y_{m}$ be the dual basis of $X_{1},\dots, X_{n},Y_{1},\dots, Y_{m}$ and define the differential bigraded algebra (DBA)
\[B_{\Gamma}^{p,q}=\bigoplus_{a+c=p,b+d=q}\bigwedge^{a} \langle x_{1},\dots,x_{n}\rangle \otimes \bigwedge^{b} \langle \bar x_{1},\dots,\bar x_{n}\rangle \otimes {\rm span} \left\langle\beta_{I}\gamma_{J} y_{I}\wedge\bar y_{J}{\Big \vert} \begin{array}{cc}\vert I\vert=c,\, \vert J\vert=d \\  (\beta_{J}\gamma_{L})_{\vert_{\Gamma}}=1\end{array}\right\rangle
\]
Then the inclusion $B_{\Gamma}^{\ast,\ast}\subset A^{\ast,\ast}(G/\Gamma) $ induces a cohomology isomorphism.
\end{theorem}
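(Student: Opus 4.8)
The plan is to follow the proof of Theorem~\ref{dolbb} essentially line by line, making only two substitutions. First, the relevant holomorphic fibration is now the one furnished directly by the splitting,
\[N/\Gamma^{\prime\prime}\to G/\Gamma\to \C^{n}/\Gamma^{\prime},\]
rather than the Mostow fibration; it is holomorphic because each $\phi(t)$ is a holomorphic automorphism of $(N,J_{N})$. Second, the semisimple datum is now supplied by $\phi_{s}$ instead of $\Ad_{s}$: as observed in the Remark above, on $\g=\C^{n}\ltimes\frak n$ one has $\Ad_{s}=\id_{\C^{n}}\oplus\phi_{s}$, so the semisimple part of the $\C^{n}$-action on $\frak n$ plays exactly the role that $\Ad_{s}$ played before. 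This is precisely what lets the argument proceed even though $\phi$ itself need not be semisimple.

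First I would introduce the decreasing filtration $F^{r}A^{p,q}(G/\Gamma)$ by the number $a+b$ of base one-forms $x_{i},\bar x_{i}$ pulled back from $\C^{n}/\Gamma^{\prime}$, exactly as in Theorem~\ref{dolbb}. Holomorphicity of the fibration makes this the Borel spectral sequence $\,^{\ast}E^{\ast,\ast}_{\ast}$ (see \cite{FW}), with
\[\,^{p}E_{2}^{p,q}=\bigoplus_{i}H^{i,s-i}(\C^{n}/\Gamma^{\prime},{\bf H}^{p-i,t-i}(N/\Gamma^{\prime\prime})).\]
The same filtration restricts to $B_{\Gamma}^{\ast,\ast}$ and is compatible with $\bar\partial$, producing a spectral sequence $\,^{\ast}\tilde E_{\ast}^{\ast,\ast}$ together with the map $\hat i\colon \tilde E\to E$ induced by the inclusion $i\colon B_{\Gamma}^{\ast,\ast}\hookrightarrow A^{\ast,\ast}(G/\Gamma)$.

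Next I would prove $\hat i$ injective on every page by the same averaging retraction $\phi\colon A^{\ast,\ast}(G/\Gamma)\to B_{\Gamma}^{\ast,\ast}$: keep only the components with $(\beta_{I}\gamma_{J})\vert_{\Gamma}=1$ and replace each coefficient by its average $\int_{G/\Gamma}(f_{IJKL}/\beta_{I}\gamma_{J})\,\eta$ against the normalized bi-invariant measure $\eta$. Using $\int_{G/\Gamma}A(F)\,\eta=0$ for left-invariant $A$ (cf.\ \cite{Bel}), one checks that $\phi$ is a cochain map with $\phi\circ i=\mathrm{id}$ and $\phi(F^{r}A^{p,q})\subset F^{r}B_{\Gamma}^{p,q}$, so $\hat\phi\circ\hat i=\mathrm{id}$ and $\hat i$ is injective. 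It then remains to match the $E_{2}$-terms. The $\C^{n}$-action on $\frak n$ induces a representation $\rho$ on $H^{\ast,\ast}(\frak n)\cong H^{\ast,\ast}(N/\Gamma^{\prime\prime})$ whose semisimple part $\rho_{s}$ is induced by $\phi_{s}$; applying Lemma~\ref{tori} with $A=\C^{n}$ gives
\[\bigoplus\,^{\ast}E_{2}^{\ast,\ast}\cong H^{\ast,\ast}\!\left(\g/\frak n,\ \bigoplus_{\epsilon_{i}\vert_{\Gamma}=1}V_{\epsilon_{i}^{-1}\rho_{\delta_{i}}}\right),\]
where $\epsilon_{i}$ are the unitary characters attached to the generalized weight decomposition $\bigwedge^{\ast,\ast}\frak n=\bigoplus V_{\sigma_{\delta_{i}}}$ under the $\phi_{s}$-action $\sigma$. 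On the other side the same decomposition yields $B_{\Gamma}^{\ast,\ast}=\bigwedge^{\ast}\langle x_{i}\rangle\otimes\bigwedge^{\ast}\langle\bar x_{i}\rangle\otimes\bigoplus_{\epsilon_{i}\vert_{\Gamma}=1}\epsilon_{i}V_{\sigma_{\delta_{i}}}$, and since $\sigma$ induces $\rho_{s}$ one has $H^{\ast,\ast}(V_{\sigma_{\delta_{i}}})\cong V_{\rho_{\delta_{i}}}$, identifying $\,^{\ast}\tilde E_{2}^{\ast,\ast}$ with the same expression. Hence $\hat i$ is an isomorphism at $E_{2}$ and the theorem follows.

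I expect the one genuinely delicate point to be the identification of $\rho_{s}$ with the action induced by $\phi_{s}$ and the matching $H^{\ast,\ast}(V_{\sigma_{\delta_{i}}})\cong V_{\rho_{\delta_{i}}}$: since $\phi$ is not assumed semisimple, one must verify that passing to the semisimple part commutes with inducing the action first on $\bigwedge^{\ast,\ast}\frak n$ and then on cohomology, so that the weights $\delta_{i}$ and the unitary characters $\epsilon_{i}$ --- hence the summation condition $\epsilon_{i}\vert_{\Gamma}=1$ --- agree on both the $E$- and $\tilde E$-sides. This reduces to the fact that the unipotent factor $\phi_{u}$ of $\phi$ induces unipotent operators on $\bigwedge^{\ast,\ast}\frak n$ and on $H^{\ast,\ast}(\frak n)$, so it does not disturb the weight spaces. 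Once this is in place, every remaining step is identical to the proof of Theorem~\ref{dolbb}.
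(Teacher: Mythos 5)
Your proposal is correct and takes essentially the same route as the paper, whose proof of this statement consists precisely of the observation $\Ad_{s}=\id_{\C^{n}}\oplus\phi_{s}$ together with the instruction to repeat the argument of Theorem~\ref{dolbb} (Borel spectral sequence of the splitting fibration, averaging retraction for injectivity, and Lemma~\ref{tori} to match the $E_{2}$-terms). The ``delicate point'' you flag --- that the unipotent part of $\phi$ does not disturb the weight decomposition, so the action induced by $\phi_{s}$ on $\bigwedge^{\ast,\ast}\frak n^{\ast}$ computes the semisimple part of the action on $H^{\ast,\ast}_{\bar\partial}(\frak n)$ --- is exactly the step already used in the proof of Theorem~\ref{dolbb}, so it carries over verbatim.
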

\end{rem}

\section{Dolbeault cohomology and modifications}\label{Dolb_mod}

Using the DBA described in the previous Section, we will show that for some cases the Dolbeault cohomology of $(G /\Gamma, J)$ is isomorphic to the invariant Dolbeault cohomology of a modification of the Lie algebra of $G$.

Consider the same assumptions as in Theorem \ref{dolbb} and the DBA
\[B_{\Gamma}^{p,q}=\bigoplus_{a+c=p,b+d=q}\bigwedge^{a} \langle x_{1},\dots,x_{n}\rangle \otimes \bigwedge^{b} \langle \bar x_{1},\dots,\bar x_{n}\rangle \otimes {\rm span}\left\langle\beta_{I}\gamma_{J} y_{I}\wedge\bar y_{J}{\Big \vert} \begin{array}{cc}\vert I\vert=c,\, \vert J\vert=d \\  (\beta_{J}\gamma_{L})_{\vert_{\Gamma}}=1\end{array}\right\rangle
\]
such that  the inclusion $B_{\Gamma}^{\ast,\ast}\subset A^{\ast,\ast}(G/\Gamma) $ induces a cohomology isomorphism.

Let
$$
\rho: G \rightarrow GL(\frak g_{\C})
$$
be a representation such that $\rho(g)X_{i}=X_{i}$, $\rho(g)\bar X_{i}=\bar X_{i}$,  $\rho(g)Y_{i}=\beta_{i} Y_{i}$ and $\rho(g)\bar Y_{i}=\gamma_{i}\bar Y_{i}$.
\begin{lemma}\label{Au}
We denote by $\Aut^{d}_{\frak n_{\C}}(\frak g_{\C})$ the group of automorphisms  of the complex Lie algebra $\frak g_{\C}$ which are diagonalized by the basis $X_{1},\dots,X_{n},\bar X_{1},\dots, \bar X_{n}, Y_{1},\dots ,Y_{m},\bar Y_{1},\dots,\bar Y_{m}$ such that $f\in \Aut^{d}_{\frak n_{\C}}(\frak g_{\C})$ satisfies $fX_{i}=X_{i}$ and $f\bar X_{i}=\bar X_{i}$.
Then for any $g\in G$, we have $\rho(g)\in \Aut^{d}_{\frak n_{\C}}(\frak g_{\C})$.
\end{lemma}
\begin{proof}
For $f\in \Aut^{d}_{\frak n_{\C}}(\frak g_{\C})$, we have
$f[Y_{i},Y_{j}]=[f Y_{i},f Y_{j}]$, $f[Y_{i},\bar Y_{j}]=[f Y_{i},f \bar Y_{j}]$ and $f[\bar Y_{i},\bar Y_{j}]=[f \bar Y_{i},f \bar Y_{j}]$.
Choosing the parameters  $(t_{1},\dots ,t_{m}, s_{1},\dots, s_{m})$ such that $f Y_{i}=t_{i} Y_{i}$ and $f \bar Y_{i}= s_{i} Y_{i}$,
the algebraic group $\Aut^{d}(\frak n_{\C})$ is then defined by  equations of the form
\[t_{[m]}^{I}s_{[m]}^{J}=t_{[m]}^{K}s_{[m]}^{L}
\]
where for a multi-index $I=(i_{1},\dots,i_{m})$ we write $t_{[m]}^{I}=t_{1}^{i_{1}}\cdots t_{m}^{i_{m}}$.
If   the conditions
\[\alpha_{[m]}^{I} \bar\alpha_{[m]}^{J}=\alpha_{[m]}^{K} \bar\alpha_{[m]}^{L}\]
hold,  by  definition of $\beta_{i}$ and $\gamma_{i}$ we get
\[\beta_{[m]}^{I} \gamma_{[m]}^{J}=\beta_{[m]}^{K} \gamma_{[m]}^{L}.\]
Hence, since by construction $(\Ad_{s})_g\in \Aut^{d}_{\frak n_{\C}}(\frak g_{\C})$, we have $\rho(g)\in \Aut^{d}_{\frak n_{\C}}(\frak g_{\C})$.
\end{proof}
We have that $T = {\mathcal A} (\rho (G))$ and thus by the previous lemma $T\subset  \Aut^{d}_{\frak n_{\C}}(\frak g_{\C})$.
 Let $S^{\perp}$ the connected subtorus of $T$ defined by
$$
S^{\perp} =  {\mathcal A} (\rho (\Gamma)).
$$
We assume $S^{\perp}$ is connected.
In case $S^{\perp}$ is not connected, we can take a finite index subgroup $\tilde\Gamma\subset \Gamma$ such that ${\mathcal A} (\rho (\tilde\Gamma))$ is connected.
We have the direct decomposition $T = S \times S^{\perp}$,  where  $S$ is a complement of $S^{\perp}$ in $T$. Consider the projection 
$$
\pi: G \rightarrow T \rightarrow S,
$$
then 
$$
\pi: G \rightarrow (\tilde \beta_1, \ldots, \tilde \beta_m, \tilde \gamma_1, \ldots, \tilde \gamma_m).
$$
Consider the vector spaces
$$
\begin{array}{l}
\tilde {\frak g}^{1,0} =  {\rm {span}}  \langle X_1, \ldots X_n,  \tilde \beta_1^{-1} Y_1, \ldots,  \tilde \beta_m^{-1} Y_m  \rangle,\\[4 pt]
\tilde {\frak g}^{0,1}  = {\rm {span}}  \langle \overline X_1, \ldots  \overline X_n,  \tilde \gamma_1^{-1} \overline  Y_1, \ldots,   \tilde \gamma_m^{-1}   \overline Y_m \rangle,
\end{array}
$$
%define a new  real solvable Lie algebra $\tilde {\frak g}$  (which is not complex)   
%endowed with a complex structure $\tilde J$ which has $\tilde {\frak g}^{1,0}$ has $(1,0)$-vectors.  This Lie algebra $( \tilde {\frak g}, \tilde J)$ can be viewed as a modification of $(\frak g, J)$ and one has the following
with duals
$$
\begin{array}{l}
(\tilde {\frak g}^{1,0})^{\ast} =  {\rm {span}} <x_1, \ldots x_n,   \tilde \beta_1 y_1, \ldots,   \tilde \beta_m y_m >,\\[4 pt]
(\tilde {\frak g}^{0,1})^{\ast}  = {\rm {span}} <\overline x_1, \ldots  \overline x_n,    \tilde \gamma_1\overline y_1, \ldots,    \tilde \gamma_m\overline y_m >,
\end{array}
$$
and the subcomplex $\bigwedge (\tilde {\frak g}^{1,0})^{\ast}\otimes (\tilde {\frak g}^{0,1})^{\ast} $ of the Dolbeault complex $A^{\ast,\ast}(G/\Gamma)$.
Then we can show the following:
\begin{lemma}
$B^{\ast,\ast}_{\Gamma}\subset\bigwedge (\tilde {\frak g}^{1,0})^{\ast}\otimes (\tilde {\frak g}^{0,1})^{\ast} $.
\end{lemma}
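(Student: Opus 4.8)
The plan is to verify the asserted inclusion on the algebra generators. The space $B^{\ast,\ast}_{\Gamma}$ is generated, as an exterior algebra, by the one-forms $x_{1},\dots,x_{n},\bar x_{1},\dots,\bar x_{n}$ together with the products $\beta_{I}\gamma_{J}\,y_{I}\wedge\bar y_{J}$ subject to $(\beta_{I}\gamma_{J})\vert_{\Gamma}=1$, while $\bigwedge(\tilde{\frak g}^{1,0})^{\ast}\otimes(\tilde{\frak g}^{0,1})^{\ast}$ is generated by $x_{i},\bar x_{i}$ and by $\tilde\beta_{i}y_{i}$, $\tilde\gamma_{i}\bar y_{i}$. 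Since the $x_{i},\bar x_{i}$ lie in both sides, the whole statement reduces to a single character identity: for every multi-index pair $(I,J)$ with $(\beta_{I}\gamma_{J})\vert_{\Gamma}=1$ one has $\beta_{I}\gamma_{J}=\tilde\beta_{I}\tilde\gamma_{J}$, where $\tilde\beta_{I}=\prod_{i\in I}\tilde\beta_{i}$ and $\tilde\gamma_{J}=\prod_{j\in J}\tilde\gamma_{j}$. Indeed, once this is known, the generator $\beta_{I}\gamma_{J}\,y_{I}\wedge\bar y_{J}$ equals $\tilde\beta_{I}\tilde\gamma_{J}\,y_{I}\wedge\bar y_{J}$, which is a wedge of the generators $\tilde\beta_{i}y_{i}$ of $(\tilde{\frak g}^{1,0})^{\ast}$ and $\tilde\gamma_{j}\bar y_{j}$ of $(\tilde{\frak g}^{0,1})^{\ast}$, hence lies in the target algebra.

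To prove the character identity I would use the splitting $T=S\times S^{\perp}$ coming with the projections $p_{S}\colon T\to S$ and $p_{S^{\perp}}\colon T\to S^{\perp}$. By Lemma~\ref{Au} we have $T=\mathcal A(\rho(G))\subset\Aut^{d}(\frak n_{\C})$, so each $\beta_{i}$ and $\gamma_{i}$ is (the pullback along $\rho$ of) an algebraic character of $T$, and I would factor it through the splitting as $\beta_{i}=\tilde\beta_{i}\,\beta_{i}^{\perp}$ and $\gamma_{i}=\tilde\gamma_{i}\,\gamma_{i}^{\perp}$, where the tilde parts are the $S$-components $\beta_{i}\vert_{S}\circ p_{S}$, $\gamma_{i}\vert_{S}\circ p_{S}$ and the $\perp$ parts are the $S^{\perp}$-components. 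By the very description of $\pi\colon G\to T\to S$ these $S$-components are exactly the characters $\tilde\beta_{i},\tilde\gamma_{i}$ used to define $\tilde{\frak g}^{1,0}$ and $\tilde{\frak g}^{0,1}$. The decisive point is that $\rho(\Gamma)\subset S^{\perp}=\mathcal A(\rho(\Gamma))$, so $p_{S}(\rho(\gamma))=e$ for all $\gamma\in\Gamma$; consequently $\tilde\beta_{i}\vert_{\Gamma}=\tilde\gamma_{i}\vert_{\Gamma}=1$, whence $(\tilde\beta_{I}\tilde\gamma_{J})\vert_{\Gamma}=1$ as well.

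Combining this with the hypothesis $(\beta_{I}\gamma_{J})\vert_{\Gamma}=1$ yields $(\beta_{I}^{\perp}\gamma_{J}^{\perp})\vert_{\Gamma}=1$, where $\beta_{I}^{\perp}\gamma_{J}^{\perp}$ is an algebraic character of $S^{\perp}$. Since $\rho(\Gamma)$ is Zariski-dense in $S^{\perp}=\mathcal A(\rho(\Gamma))$ and a regular character vanishing on a Zariski-dense subgroup must be trivial, I conclude $\beta_{I}^{\perp}\gamma_{J}^{\perp}=1$ on $S^{\perp}$, and therefore $\beta_{I}\gamma_{J}=\tilde\beta_{I}\tilde\gamma_{J}$ on all of $T$ (and on $G$ via $\rho$). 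This is precisely the Dolbeault analogue of the de Rham computation carried out after Remark~\ref{compare}, where the equality $\alpha_{I}=\beta_{I}$ is obtained by the same Zariski-density mechanism; the main obstacle is exactly this density step, which forces the full character to coincide with its $S$-component, and it rests on the assumption (made just above the statement, after possibly replacing $\Gamma$ by a finite-index subgroup) that $S^{\perp}=\mathcal A(\rho(\Gamma))$ is connected so that $T=S\times S^{\perp}$ is a genuine torus decomposition. Feeding the identity back into the generator computation of the first paragraph completes the proof.
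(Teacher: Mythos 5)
Your proposal is correct and follows essentially the same route as the paper's own proof: both reduce the inclusion to the character identity $\beta_{I}\gamma_{J}=\tilde\beta_{I}\tilde\gamma_{J}$ whenever $(\beta_{I}\gamma_{J})\vert_{\Gamma}=1$, factor the characters through the splitting $T=S\times S^{\perp}$ (your $\tilde\beta_{i}\beta_{i}^{\perp}$ is the paper's $g_{i}h_{i}$), use $\rho(\Gamma)\subset S^{\perp}$ to trivialize the $S$-component on $\Gamma$, and invoke Zariski-density of $\rho(\Gamma)$ in $S^{\perp}=\mathcal{A}(\rho(\Gamma))$ to kill the $S^{\perp}$-component globally. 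No gaps; this matches the paper's argument line by line.
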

\begin{proof}
We regard $T\subset \Aut^{d}(\frak n_{\C})$ as a subset of  the group of  the complex diagonal matrices  $D_{2m}(\C)$.
For 
\[t_{i}\in  {\rm {Char}}(D_{2m}(\C))=\{ t_{1}^{i_{1}}\cdots t_{n}^{i_{n}} s_{1}^{j_{1}}\dots s_{m}^{j_{m}}\vert\; \diag(t_{1},\dots, t_{n},s_{1}\dots s_{m})\in D_{2m}(\C)\},
\]
we denote $f_{i}=(t_{i})_{\vert T}$, $f^{\prime}_{i}=(s_{i})_{\vert T}$ , $g_{i}= (t_{i})_{\vert S}\circ p_{S}$, $g^{\prime}_{i}= (s_{i})_{\vert S}\circ p_{S}$,  $h_{i}=(t_{i})_{\vert S^{\bot}} p_{S^{\bot}}$ and $h^{\prime}_{i}=(s_{i})_{\vert S^{\bot}} \circ p_{S^{\bot}}$ where $p_{S}:T\to S$ and $p_{S^{\bot}}:T\to S^{\bot}$ are the projections.
We have $f_{i}\circ \rho=\beta_{i}$, $f^{\prime}_{i}\circ \rho=\gamma_{i}$, $g_{i}\circ \rho=\tilde\beta_{i}$, $g^{\prime}_{i}\circ \rho=\tilde\gamma_{i}$  and $f_{i}=g_{i}h_{i}$ and $f^{\prime}_{i}=g_{i}^{\prime}h^{\prime}_{i}$.
Therefore $\rho(g)$ is diagonal, for every $g \in G$. 

Suppose that $(\beta_{J}\gamma_{L})\lfloor_{\Gamma}=1$ for some $J, L \subseteq \{1,\ldots,m\}$ and consider 
\[f_{J}f^{\prime}_{L}=g_{J}g_{L}^{\prime}h_{J}h^{\prime}_{L}.
\]
Then, since $\rho(\Gamma)\subset S^{\bot}$, we have $(g_{J}g_{L}^{\prime})_{\vert \rho(\Gamma)}=1$.
Hence 
\[1=(\beta_{J}\gamma_{L})\lfloor_{\Gamma}=(f_{J}f^{\prime}_{L})_{\rho(\Gamma)}=(h_{J}h_{L}^{\prime})_{\rho(\Gamma)}.
\]
Since $S^{\bot}$ is the Zariski closure of $\rho(\Gamma)$, we get $h_{J}h_{L}^{\prime}=1$ and consequently $f_{J}f^{\prime}_{L}=g_{J}g_{L}^{\prime}$.
Hence if  $(\beta_{J}\gamma_{L})\lfloor_{\Gamma}=1$, we obtain 
\[\beta_{J}\gamma_{L}=\tilde\beta_{J}\tilde\gamma_{L}
\]
and thus the lemma follows.
\end{proof}

By using the previous  Lemma, Theorem \ref{dolbb}, and the  inclusions $B^{\ast,\ast}_{\Gamma}\subset\bigwedge (\tilde {\frak g}^{1,0})^{\ast}\otimes (\tilde {\frak g}^{0,1})^{\ast} \subset A^{\ast,\ast}(G/\Gamma)$, we obtain the isomorphism
\[H^{\ast,\ast}_{ \overline \partial}(G/\Gamma)\cong H_{ \overline \partial}^{\ast,\ast}\left(\bigwedge (\tilde {\frak g}^{1,0})^{\ast}\otimes (\tilde {\frak g}^{0,1})^{\ast}\right ).
\]

Since $\tilde\beta_{i}$ and $\tilde \gamma_{i}$ are unitary,
there exist holomorphic characters $\delta_{i}$  such that
\[\frac{\bar\delta_{i}}{\delta_{i}}=\tilde \beta_{i}\tilde \gamma _{i}.
\]
Then we can prove the following
\begin{lemma}
Let $\rho^{\prime}:G\to GL(\frak g_{\C})$ such that $\rho^{\prime}(g)X_{i}=X_{i}$, $\rho^{\prime}(g)\bar X_{i}= \bar X_{i}$, 
$\rho^{\prime}(g)Y_{i}=\delta_{i}Y_{i}$ and $\rho^{\prime}(g)\bar Y_{i}=\delta_{i} \bar Y_{i}$.
Then we have $\rho^{\prime}(g)\in\Aut^{d}_{\frak n_{\C}}(\frak g_{\C})$.
\end{lemma}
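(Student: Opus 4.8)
The plan is to verify directly that $\rho'(g)$ preserves every Lie bracket of $\frak n_{\C}$. Since $\rho'(g)$ acts diagonally on $\frak n_{\C}=\langle Y_1,\dots,Y_m,\bar Y_1,\dots,\bar Y_m\rangle$ with eigenvalue $\delta_i$ on both $Y_i$ and $\bar Y_i$, membership $\rho'(g)\in\Aut^{d}(\frak n_{\C})$ is equivalent to the following: for each ordered pair of basis vectors $(u,v)$ and each basis vector $w$ occurring in $[u,v]$, the product of the $\rho'(g)$-eigenvalues on $u$ and $v$ equals the eigenvalue on $w$ (this is the genuine generating set of relations for $\Aut^{d}$, one per nonzero structure constant). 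Because $J$ is integrable, $\frak n^{1,0}=\langle Y_1,\dots,Y_m\rangle$ and $\frak n^{0,1}=\langle\bar Y_1,\dots,\bar Y_m\rangle$ are complex subalgebras, so these relations split into: $\delta_i\delta_j=\delta_k$ coming from the $Y_k$-component of $[Y_i,Y_j]$ (and its conjugate from $[\bar Y_i,\bar Y_j]$), together with $\delta_i\delta_j=\delta_k$ coming from either the $Y_k$- or the $\bar Y_k$-component of a mixed bracket $[Y_i,\bar Y_j]$. In every case the relation to be checked reads $\delta_i\delta_j=\delta_k$.

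First I would feed in what is already known. By the discussion preceding the lemma, $T=\mathcal A(\rho(G))\subset\Aut^{d}(\frak n_{\C})$ and $T=S\times S^{\perp}$, so the projection $\pi(g)=p_S(\rho(g))$ lies in $S\subset\Aut^{d}(\frak n_{\C})$, and it is diagonal with eigenvalues $\tilde\beta_i$ on $Y_i$ and $\tilde\gamma_i$ on $\bar Y_i$. Hence the pair $(\tilde\beta_i,\tilde\gamma_i)$ satisfies exactly the same primitive relations. For a pure bracket with $Y_k$ occurring in $[Y_i,Y_j]$ this gives $\tilde\beta_i\tilde\beta_j=\tilde\beta_k$, and, conjugating, $\tilde\gamma_i\tilde\gamma_j=\tilde\gamma_k$; multiplying these yields $(\tilde\beta_i\tilde\gamma_i)(\tilde\beta_j\tilde\gamma_j)=\tilde\beta_k\tilde\gamma_k$. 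For a mixed bracket the relation involves $\tilde\beta_i$ and $\tilde\gamma_j$ separately, so here I would invoke the reality of the structure constants of $\frak n$: the $\bar Y_k$-component of $[Y_i,\bar Y_j]$ is, up to sign and complex conjugation, the $Y_k$-component of $[Y_j,\bar Y_i]$. This pairs each mixed relation with a companion one, and after multiplying the pair only the products $\tilde\beta_i\tilde\gamma_i$ survive, giving again $(\tilde\beta_i\tilde\gamma_i)(\tilde\beta_j\tilde\gamma_j)=\tilde\beta_k\tilde\gamma_k$.

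Finally I would use the defining property $\bar\delta_i/\delta_i=\tilde\beta_i\tilde\gamma_i$. Substituting it into $(\tilde\beta_i\tilde\gamma_i)(\tilde\beta_j\tilde\gamma_j)=\tilde\beta_k\tilde\gamma_k$ and setting $w=\delta_i\delta_j\delta_k^{-1}$, the identity rearranges to $\bar w=w$. But $w$ is a monomial in the holomorphic characters $\delta_i$, hence holomorphic, so $w=\bar w$ forces $w$ to be simultaneously holomorphic and anti-holomorphic, thus constant on the connected group and equal to the trivial character. Therefore $\delta_i\delta_j=\delta_k$, which is precisely the relation needed. Since this holds for every nonzero structure constant, $\rho'(g)$ preserves all brackets and is diagonalized by the chosen basis, i.e. $\rho'(g)\in\Aut^{d}(\frak n_{\C})$.

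The main obstacle is the treatment of the mixed brackets $[Y_i,\bar Y_j]$: unlike the pure $(1,0)$ and $(0,1)$ brackets they need not be homogeneous, and the hypothesis $\bar\delta_i/\delta_i=\tilde\beta_i\tilde\gamma_i$ only controls the products $\tilde\beta_i\tilde\gamma_i$, not $\tilde\beta_i$ and $\tilde\gamma_i$ individually. Pairing each mixed relation with its conjugate partner—legitimate because complex conjugation is a real automorphism of $\frak n_{\C}$—is exactly what reassembles these products, and it is the step I expect to require the most care. A secondary point to record is that the $\delta_i$ are genuinely holomorphic, which is what makes the concluding implication ``$w$ holomorphic and $w=\bar w$, hence $w=1$'' valid.
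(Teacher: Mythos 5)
Your proof is correct, and it reaches the conclusion by a genuinely different organization of the same ingredients, with the same endgame. Both arguments finish identically: pair each monomial relation with its complex conjugate and multiply so that only the products $\tilde\beta_i\tilde\gamma_i=\bar\delta_i/\delta_i$ survive, then use holomorphy of the $\delta_i$ (a character that is simultaneously holomorphic and anti-holomorphic on a connected group is trivial) to strip the conjugates and obtain $\delta_i\delta_j=\delta_k$. The difference is upstream. The paper starts from the relations $\alpha_{I}\bar\alpha_{J}=\alpha_{K}\bar\alpha_{L}$ satisfied by the eigenvalue characters of $\Ad_s$, transfers them to the unitary parts $\beta_i,\gamma_i$ via the correspondence of \cite[Lemma 2.2]{Kas1}, conjugates that identity of functions, multiplies, and only then restricts to the subtorus $S$ (Zariski density of $\rho(G)$ in $T$) to introduce $\tilde\beta_i,\tilde\gamma_i$. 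You instead exploit directly that $S\subset T\subset \Aut^{d}(\frak n_{\C})$ --- a containment the paper establishes immediately before the lemma --- so that $\pi(g)=p_{S}(\rho(g))$ itself satisfies the defining relations with eigenvalues $(\tilde\beta_i,\tilde\gamma_i)$; this bypasses the $\alpha\mapsto\beta,\gamma$ transfer step entirely and is more economical. The price, which you correctly identify and pay, is that your conjugation is performed on the relation set itself rather than on identities of functions, so you need the relation set of $\Aut^{d}(\frak n_{\C})$ to be stable under swapping the $Y$- and $\bar Y$-eigenvalues; your appeal to the reality of the structure constants (conjugation is an automorphism of $\frak n_{\C}=\frak n\otimes\C$) is exactly the right justification, and your pairing of each mixed relation coming from $[Y_i,\bar Y_j]$ with its companion from $[Y_j,\bar Y_i]$ handles the only delicate case. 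Your description of the defining relations (one per nonzero structure constant, of the form ``product of eigenvalues of the arguments equals eigenvalue of the target'') is also more concrete than the paper's generic monomial form $t_{I}s_{J}=t_{K}s_{L}$, though both capture the same algebraic group.
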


\begin{proof}
Consider the equations
\[t_{[m]}^{I}s_{[m]}^{J}=t_{[m]}^{K}s_{[m]}^{L}
\]
defining the algebraic group $\Aut^{d}_{\frak n_{\C}}(\frak g_{\C})$,
as in the proof of Lemma \ref{Au}. 
Since $S$ is a sub-torus of $T={\mathcal A} (\rho (G))\subset \Aut^{d}_{\frak n_{\C}}(\frak g_{\C})$,
we have
\[\tilde\beta_{[m]}^{I} \tilde\gamma_{[m]}^{J}=\tilde\beta_{[m]}^{K} \tilde\gamma_{[m]}^{L}.\]
By the complex conjugation,
we also get
\[\bar\alpha_{[m]}^{I} \alpha_{[m]}^{J}=\bar \alpha_{[m]}^{K} \alpha_{[m]}^{L}\]
and this implies
\[\gamma_{[m]}^{I} \beta_{[m]}^{J}=\gamma_{[m]}^{K} \beta_{[m]}^{L}\]
and so
\[\tilde\gamma_{[m]}^{I} \tilde\beta_{[m]}^{J}=\tilde\gamma_{[m]}^{K} \tilde\beta_{[m]}^{L}.\]
Thus we have
$$
\tilde\beta_{[m]}^{I}\tilde\gamma_{[m]}^{I}\tilde\beta_{[m]}^{J}\tilde\gamma_{[m]}^{J}
=\tilde\beta_{[m]}^{K}\tilde\gamma_{[m]}^{K}\tilde\beta_{[m]}^{L}\tilde\gamma_{[m]}^{L}.
$$
By the construction of $\delta_{i}$, we obtain
\[\frac{\bar\delta_{[m]}^{I}}{\delta_{[m]}^{I}}
\frac{\bar\delta_{[m]}^{J}}{\delta_{[m]}^{J}}
=\frac{\bar\delta_{[m]}^{K}}{\delta_{[m]}^{K}} \frac{\bar\delta_{[m]}^{L}}{\delta_{[m]}^{L}}.
\]
Since $\delta_{i}$ is holomorphic, the following relation holds
\[\delta_{[m]}^{I} \delta_{[m]}^{J}=\delta_{[m]}^{K} \delta_{[m]}^{L}.\]
Therefore $\rho^{\prime}(g)\in \Aut^{d}_{\frak n_{\C}}(\frak g_{\C})$.
\end{proof}
Take 
\[\begin{array}{l}
{\breve {\frak g}}^{1,0} =  {\rm {span}} <X_1, \ldots X_n,   \tilde \beta_1^{-1} \delta_{1}^{-1} Y_1, \ldots,   \tilde \beta_m^{-1}\delta_{m}^{-1} Y_m >,\\[4 pt]
{ \breve{\frak g}}^{0,1}  = {\rm {span}} <\overline X_1, \ldots  \overline X_n,    \tilde \gamma_1^{-1}\delta^{-1}_{1}\overline Y_1, \ldots,    \tilde \gamma_m^{-1}\delta^{-1}_{m}\overline Y_m >,
\end{array}\]
Then ${\breve {\frak g}}^{1,0}\oplus {\breve{\frak g}}^{0,1} $ is a complex Lie algebra and
 $\overline{\tilde{\beta_{i}}\delta_{i}}=\tilde \gamma _{i}\delta_i$ yields
\[\overline{{\breve {\frak g}}^{1,0}}= { \breve{\frak g}}^{0,1} .
\]
Since ${\breve {\frak g}}^{1,0}$ is closed under bracket,
 the complex Lie algebra ${\breve {\frak g}}^{1,0}\oplus {\breve{\frak g}}^{0,1} $ has a real form $\breve\g$ endowed with a integrable complex structure $\breve J$.
Consider the cochain complex
\[\bigwedge ({\breve {\frak g}}^{1,0})^{\ast}\otimes  ({\breve{\frak g}}^{0,1})^{\ast}.
\]
Since $\delta_{i}$ is holomorphic, we have a cochain complex isomorphism
\[\bigwedge ({\breve {\frak g}}^{1,0})^{\ast}\otimes  ({\breve{\frak g}}^{0,1})^{\ast}\cong \bigwedge (\tilde {\frak g}^{1,0})^{\ast}\otimes (\tilde {\frak g}^{0,1})^{\ast} .
\]
Thus $H_{ \overline \partial}^{*,*} (G/\Gamma) \cong H^{*,*}_{ \overline \partial } ( \breve {\frak g},  \breve J)$ and we can state the following theorem.
\begin{theorem}\label{semidirect_prod}
Let $G$ be a simply connected solvable Lie group  with a left-invariant complex structure $J$, a lattice $\Gamma$ and the nilradical $N$.
In the same setting as in Section \ref{hol_Mostow}, we suppose Condition \ref{condd}.
Furthermore, we assume  $H_{\overline{\partial}}^{*,*} (N/\Gamma  \cap N) \cong H_{\overline{\partial}}^{*,*} (\frak n)$.

Then there exists a finite index subgroup $\tilde \Gamma$ of $\Gamma$ and a real  Lie algebra $\breve\g$ with a  complex structure $\breve J$   such that
$$H_{ \overline \partial}^{*,*} (G/ \tilde \Gamma) \cong H^{*,*}_{ \overline \partial } ( \breve {\frak g},  \breve J).$$
Moreover 
 \begin{enumerate}
\item If we suppose  furthermore that $\g$ is completely solvable, then we can choose $\breve\g$ as a completely solvable Lie algebra.

\item If we suppose  furthermore that $J$ is an abelian complex structure (i.e. ${\mathfrak
g}^{1,0}$ is abelian), then we can choose $\breve J$ as an abelian complex structure on  $\breve\g$.
\end{enumerate}

\end{theorem}

%\begin{cor}
%Under the same assumptions as in  Theorem \ref{semidirect_prod},
%we suppose  furthermore that $\g$ is completely solvable.
%Then   there exists a finite index subgroup $\tilde \Gamma$ of $\Gamma$ and  a completely solvable real  Lie algebra $\breve\g$ with a  complex structure $\breve J$   such that
%$$H_{ \overline \partial}^{*,*} (G/ \tilde \Gamma) \cong H^{*,*}_{ \overline \partial } ( \breve {\frak g},  \breve J).$$
%\end{cor}

\begin{proof}[Proof of the assertion (1)]

Since $\alpha_{i}=\bar\alpha_{i}$, we have $\beta_{i}=\gamma_{i}$ and hence $\tilde\beta_{i}=\tilde \gamma_{i}$.
We have
$\delta_{i}=\tilde\alpha_{i}\overline{\tilde\beta_{i}}$ 
for some real  character $\tilde \alpha_{i}$ and thus 
\[\begin{array}{l}
{\breve {\frak g}}^{1,0} =  {\rm {span}} <X_1, \ldots X_n,  \tilde \alpha_{1}^{-1} Y_1, \ldots,   \tilde \alpha_{m}^{-1} Y_m >,\\[4 pt]
{ \breve{\frak g}}^{0,1}  = {\rm {span}} <\overline X_1, \ldots  \overline X_n,     \tilde \alpha_{1}^{-1} Y_1, \ldots,   \tilde \alpha_{m}^{-1} Y_m >.
\end{array} 
\]
\end{proof}

\begin{proof}[Proof of the assertion (2)]
We have $$[J X, J Y] = [X, Y], \quad \forall X, Y \in
{\mathfrak g}\, ,
$$
or equivalently 
$$d ({\cg}^{1,0})^{\ast}  \subset
  ({\cg}^{1,1})^{\ast}\, .$$
 We have
$$
\ad_X (JY) = - \ad_{JX} Y, \quad \forall X, Y \in \frak g.
$$
If $Z  \in \frak g^{1,0}$, i.e.  if $J Z = \sqrt{-1} Z$, we get
$$
(\ad_X + \sqrt{-1} J \ad_X) (Z) =0, \quad \forall X \in \frak g,
$$
and so in particular that $\ad_s$ vanishes on $\frak g^{1,0}$.
This implies that  $\alpha_{1},\dots, \alpha_{n}$ are anti-holomorphic.
Hence by the constructions of   $\gamma_{i}$, 
each $\gamma_{i}$ is trivial and each $\tilde\gamma_{i}$ is also trivial.

Since $\tilde \beta_i$ and $\tilde \gamma_i$ are unitary,  we have holomorphic characters $\delta_i$ such that
$$
\frac{\overline \delta_i} {\delta_i} = \tilde \beta_i .
$$
We can show  that the  complex structure $ \breve J$ on ${\breve {\frak g}}$ is abelian since
$$
{\breve {\frak g}}^{1,0} =  {\rm {span}} <X_1, \ldots X_n,   \overline \delta_{1}^{-1} Y_1, \ldots,    \overline \delta_{m}^{-1} Y_m >$$
and the $  \overline \delta_{i}^{-1}$ are anti-holomorphic.

\end{proof}

\begin{rem}
Note that since $J$ is abelian we  always have \cite{CF, CFGU}
$$
H^{*, *}_{\overline \partial} (\frak n) \cong  H^{*,*}_{\overline \partial} (N/\Gamma_N, J_N)\, ,
$$
where $N$ is the nilradical of $G$ and $J_N$ is the abelian complex structure induced by $J$ on $N$.
\end{rem}

\begin{ex}  We can apply  Theorem  \ref{semidirect_prod} to the Lie group  $G= \C \ltimes_{\phi}   \C^2$  with
$$
\phi (z_1) = \left ( \begin{array}{cc} e^{\overline z_1} & 0\\ 0 & e^{- \overline z_1} \end{array} \right ).
$$
Consider on $G$ the abelian complex structure $J$ defined by 
$$
\cg^{1,0}= {\rm {span}} \left \langle \frac {\partial}{\partial z_1}, e^{\bar z_1} \frac {\partial}{\partial z_2}, 
e^{-\bar z_1} \frac {\partial}{\partial z_3}\right \rangle\, .
$$
There is  a lattice  $\Gamma$ in $G$  of the form $(\Z + 2 \pi \sqrt{-1} \Z) \ltimes \Gamma'$ with $\Gamma'$ lattice in $N = \C^2$.
 In this case we have $\delta_i = \overline \alpha_i$ and $\tilde \alpha_i = \alpha_i$. Since
 $$
 \alpha_1 = e^{\overline z_1}, \quad  \alpha_2 = e^{- \overline z_1}
 $$
 we obtain that  
 $$
  \breve {\frak g}^{1,0} = {\rm {span}}  <X_1, Y_1, Y_2>= {\rm {span}}  \left \langle \frac {\partial}{\partial z_1},  \frac {\partial}{\partial z_2}, 
\frac {\partial}{\partial z_3}\right \rangle\, .
  $$
  Therefore  the modification $( \breve {\frak g},  \breve J)$ of $({\frak g}, J)$ is the abelian Lie algebra $\C^3$.
\end{ex}

In the case of a \emph{complex parallelizable compact solvmanifold},  
the assumptions of Theorem \ref{semidirect_prod} hold and hence we can show the following.

\begin{cor}\label{parall}
Let $G$ be a simply connected complex solvable Lie group with a lattice $\Gamma$.
Then there exists a finite index subgroup $\tilde \Gamma$ of $\Gamma$ and  a complex  Lie algebra $\breve\g$ such that
$$H_{ \overline \partial}^{*,*} (G/\tilde \Gamma)\cong \bigwedge^*  \breve {\frak g}^{1,0} \otimes  H^*( \breve {\frak g}^{0,1})\, .$$
\end{cor}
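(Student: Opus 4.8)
The plan is to deduce the statement from Theorem~\ref{semidirect_prod} by first checking its hypotheses and then identifying the modified Lie algebra $\breve{\frak g}$ in the complex parallelizable case. Since $G$ is a complex Lie group, the bi-invariant complex structure $J$ makes every $\Ad_g$ complex linear; its semisimple part $\Ad_{sg}$ is therefore complex linear as well, so $\Ad_{sg}\circ J=J\circ \Ad_{sg}$ for all $g\in G$. The nilradical $\frak n$ is a complex ideal, whence $J\frak n\subseteq \frak n$, and $G/N$ is a complex torus. By Corollary~\ref{ficc}, the $\C$-linearity of $\Ad_c$ gives $\Ad_{c}\circ J_{N}=J_{N}\circ \Ad_{c}$ on $\frak n$, so the Mostow fibration is holomorphic. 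Finally $N/\Gamma\cap N$ is a complex parallelizable nilmanifold, so $H^{*,*}_{\overline{\partial}}(N/\Gamma\cap N)\cong H^{*,*}_{\overline{\partial}}(\frak n)$ by the theorem of Sakane. Thus Theorem~\ref{semidirect_prod} applies and yields a finite index subgroup $\tilde\Gamma$ and a pair $(\breve{\frak g},\breve J)$ with $H^{*,*}_{\overline{\partial}}(G/\tilde\Gamma)\cong H^{*,*}_{\overline{\partial}}(\breve{\frak g},\breve J)$. It remains to compute the right-hand side.

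Next I would exploit that $G$ is complex to pin down $\breve{\frak g}$. Because $\Ad_s$ is holomorphic here, the weights $\alpha_i$ with $\Ad_{sg}Y_{i}=\alpha_i(g)Y_{i}$ are holomorphic characters on $G/N$. By the uniqueness in \cite[Lemma 2.2]{Kas1}, the unitary character $\beta_i$ with $\alpha_i\beta_i^{-1}$ holomorphic is then trivial, so $\beta_i=1$ and $\tilde\beta_i=1$, while $\gamma_i=\overline{\alpha}_i/\alpha_i$. Tracing the construction in Theorem~\ref{semidirect_prod}, the holomorphic characters $\delta_i$ (with $\overline{\delta}_i/\delta_i=\tilde\beta_i\tilde\gamma_i=\tilde\gamma_i$) give
$$\breve{\frak g}^{1,0}={\rm {span}}\langle X_1,\dots,X_n,\delta_1^{-1}Y_1,\dots,\delta_m^{-1}Y_m\rangle,\qquad \breve{\frak g}^{0,1}=\overline{\breve{\frak g}^{1,0}},$$
the last equality being exactly the relation $\overline{\delta}_i/\delta_i=\tilde\gamma_i$. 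The key point, and the main obstacle, is to show that $(\breve{\frak g},\breve J)$ is again complex parallelizable, i.e.\ that all invariant $(1,0)$-forms are $\overline{\partial}$-closed: the base forms $x_j$ are holomorphic on the complex torus $G/N$, and since $\delta_i$ is holomorphic while the Maurer--Cartan forms $y_i$ of the complex Lie group $G$ satisfy $dy_i\in\bigwedge^{2,0}$ (hence $\overline{\partial}y_i=0$), one gets $\overline{\partial}(\delta_i y_i)=\delta_i\,\overline{\partial}y_i=0$. Together with the integrability of $\breve J$ provided by Theorem~\ref{semidirect_prod}, this forces $[\breve{\frak g}^{1,0},\breve{\frak g}^{0,1}]=0$ and makes $\breve{\frak g}^{1,0}$ a complex subalgebra, so $\breve{\frak g}$ is the realification of a complex Lie algebra.

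Finally I would compute the Dolbeault cohomology of the complex parallelizable $(\breve{\frak g},\breve J)$ using the DBA $\bigwedge(\breve{\frak g}^{1,0})^{*}\otimes(\breve{\frak g}^{0,1})^{*}$. Since every $(1,0)$-generator is $\overline{\partial}$-closed by the previous step, $\overline{\partial}$ acts as $1\otimes d$, where $d$ is the Chevalley--Eilenberg differential of the abstract Lie algebra $\breve{\frak g}^{0,1}$; note $\overline{\partial}$ preserves the $(0,*)$-factor, since $\overline{\delta}_i\bar y_i$ has $d$ of type $(0,2)$. A K\"unneth argument then gives $H^{*,*}_{\overline{\partial}}(\breve{\frak g},\breve J)\cong \bigwedge^{*}(\breve{\frak g}^{1,0})^{*}\otimes H^{*}(\breve{\frak g}^{0,1})$, which in the paper's notation is $\wedge^{*}\breve{\frak g}^{1,0}\otimes H^{*}(\breve{\frak g}^{0,1})$; combined with the isomorphism of the first paragraph this proves the corollary. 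The delicate ingredient throughout is the holomorphy of the $\delta_i$ and the resulting vanishing $\overline{\partial}(\delta_i y_i)=0$, which transfers the complex-parallelizable structure to the modification and interchanges the roles of the $(1,0)$- and $(0,1)$-factors compared with the abelian case of Theorem~\ref{Dolbab}.
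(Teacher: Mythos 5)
Your proposal is correct and follows essentially the same route as the paper: specialize the construction of Theorem~\ref{semidirect_prod} to the complex parallelizable case, observe that the holomorphy of the characters $\alpha_i$ forces $\beta_i$ and $\tilde\beta_i$ to be trivial (hence $\bar\delta_i=\delta_i\tilde\gamma_i$), and read off $\breve{\frak g}^{1,0}=\langle X_j,\delta_i^{-1}Y_i\rangle$, $\breve{\frak g}^{0,1}=\langle \overline X_j,\bar\delta_i^{-1}\overline Y_i\rangle$. The paper's own proof is just a terser version of this, leaving implicit the verification of the hypotheses of Theorem~\ref{semidirect_prod} and the final K\"unneth-type computation that you spell out.
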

\begin{proof}
In case $\g$ is a complex solvable Lie algebra, $\alpha_{i}$ is holomorphic and so $\beta_{i}$ and $\tilde\beta_{i}$ are trivial.
Hence we have $\bar\delta_{i}=\delta_{i}\tilde \gamma _{i}$ and
\[\begin{array}{l}
{\breve {\frak g}}^{1,0} =   {\rm {span}} <X_1, \ldots X_n,    \delta_{1}^{-1} Y_1, \ldots,   \delta_{m}^{-1} Y_m >,\\[4 pt]
{ \breve{\frak g}}^{0,1}  =  {\rm {span}} <\overline X_1, \ldots  \overline X_n,    \bar\delta^{-1}_{1}\overline Y_1, \ldots,     \bar\delta^{-1}_{m}\overline Y_m >.
\end{array}\]
Thus  the corollary follows.
\end{proof}

\begin{rem}
In general, it is not true that there exists a simply connected solvable Lie group $\breve G$ with the Lie algebra $\breve \g$ containing $\tilde \Gamma$ as a lattice.
For example, let $G=\C\ltimes_{\phi} \C^{2}$ such that \[\phi(x+\sqrt{-1}y)=\left(
\begin{array}{cc}
e^{x+\sqrt{-1}y}& 0  \\
0&    e^{-x-\sqrt{-1}y}  
\end{array}
\right)\]
with a lattice $\Gamma=(a\Z+2\pi\sqrt{-1})\ltimes \Gamma^{\prime\prime}$.
Then we have $H^{\ast,\ast}(G/\Gamma)\cong \bigwedge \C^{3}$ and hence we get $\breve \g\cong \C^{3}$.
But no lattice in $G$ embeds in  $\breve G=\C^{3}$  with the Lie algebra $\breve \g$.
\end{rem}

\smallskip

{\it Acknowledgments.}  We thank the referees for their  comments and help in improving the contents of this paper.

\end{document}